\def\le {\leqslant}
\def\ge {\geqslant}
\DeclareMathOperator{\supp}{supp}
\newtheorem{theorem}{Theorem}[section]
\newtheorem{corollary}[theorem]{Corollary}
\newtheorem{lemma}[theorem]{Lemma}
\newtheorem{conjecture}[theorem]{Conjecture}
\newtheorem{proposition}[theorem]{Proposition}
\theoremstyle{definition}
\newtheorem{definition}[theorem]{Definition}
\newtheorem{remark}[theorem]{Remark}
\numberwithin{equation}{section}
\title[Oscillatory integrals with polynomial phase]{Oscillatory integrals with polynomial phase and regularity of distributions}
\author{Egor Kosov}
\address{E. Kosov, Centre de Recerca Matem\`atica, Campus de Bellaterra, Edifici~C 08193
	Bellaterra (Barcelona), Spain.}
\email{kosoved09@gmail.com}
\begin{document}

\subjclass[2020]{Primary 52A05, 42B10, Secondary 26B30, 26D05}
\keywords{Oscillatory integral, van der Corput lemma, s-concave measure, log-concave measure, polynomial}

\begin{abstract}
We obtain dimension-free estimates for the modulus of continuity of densities of polynomial images of $s$-concave and product measures. As a consequence, we settle a conjecture of A.~Carbery and J.~Wright (2001) on sharp upper bounds for oscillatory integrals over convex sets with polynomial phase.
\end{abstract}

\maketitle

\section{Introduction}

\subsection{Oscillatory integrals and van der Corput lemma}
Estimates of oscillatory integrals of the form
\begin{equation}\label{integral}
\int_{\mathbb{R}^n}e^{itf(x)}g(x)\, dx
\end{equation}
are a classical topic in modern analysis and have been studied extensively (see, e.g., the monographs \cite{Gr14} and \cite{Stein}).
The central objective is to understand the asymptotic decay of the oscillatory integral \eqref{integral} as $t \to \infty$.
One of the first classical results in this direction is the so-called van der Corput lemma (see \cite{AKCh}, \cite[Section~2.6.2]{Gr14}, or \cite[Chapter VIII]{Stein}), which asserts that
$$
\Bigl|\int_a^be^{itf(x)}\, dx\Bigr|
\le C_k |t|^{-1/k}\quad \forall t\in \mathbb{R}\setminus\{0\}
$$
provided that $f^{(k)}\ge 1$ for some $k\ge2$
(in the case $k=1$, one also needs to assume the monotonicity of $f'$).
A.~Carbery, M.~Christ, and J.~Wright in \cite{CCW99} studied multidimensional analogues of the van der Corput lemma and obtained the following power decay estimate:

$$
\Bigl|\int_{[0, 1]^n}e^{itf(x)}\, dx\Bigr|
\le C(\alpha, n)|t|^{-\varepsilon(\alpha, n)}\quad \forall t\in \mathbb{R}\setminus\{0\}
$$
provided that $\frac{\partial^\alpha f}{\partial x^\alpha}\ge 1$, where $\alpha=(\alpha_1, \ldots, \alpha_n)$ and
$\alpha_j\ge 2$ for some index $j\in \{1,\ldots, n\}$.
In the case of a polynomial phase $f$, a stronger result was established in \cite{CCW99}.
Namely, let $\mathcal{P}_d(\mathbb{R}^n)$ denote the space of all algebraic polynomials in $n$ variables of degree at most $d$.
Then, for every $f \in \mathcal{P}_d(\mathbb{R}^n)$, the following two estimates hold for all $t \in \mathbb{R}\setminus\{0\}$ (see Theorem~7.2 and Corollary~7.3 in \cite{CCW99}):
\begin{equation}\label{Th-CCW-1}
\Bigl|\int_{[0, 1]^n}e^{itf(x)}\, dx\Bigr|
\le C(d, n)|t|^{-1/|\alpha|}
\end{equation}
provided that $\frac{\partial^\alpha f}{\partial x^\alpha}(x)\ge 1$,
where $|\alpha|=\alpha_1+\ldots+\alpha_n$,
and 
\begin{equation}\label{Th-CCW-2}
\Bigl|\int_{[0, 1]^n}e^{itf(x)}\, dx\Bigr|
\le C(d, n)\Bigl(\sum_{0<j_1+\ldots+j_n\le d}|a_{j_1,\ldots, j_n}|\Bigr)^{-1/d}|t|^{-1/d}.
\end{equation}

\noindent
Subsequently, this and related estimates in the multidimensional setting were investigated by A.~Carbery and J.~Wright~\cite{CW02}, M.~Christ, X.~Li, T.~Tao, C.~Thiele~\cite{CLTT05}, 
M.~Gilula, P.T.~Gressman, L.~Xiao~\cite{GGX18}, 
P.T.~Gressman, L.~Xiao~\cite{GX16}, 
D.H.~Phong, E.M.~Stein, J.~Sturm \cite{PSS01}, and many others.

It is well known that estimates of the oscillatory integral \eqref{integral} are closely connected with the measure of the sublevel sets of the function $f$.
For polynomials on convex domains, sharp estimates of this type were established by A.~Carbery and J.~Wright~\cite{CarWr} (see also \cite{NSV}). Specifically,
there exists an absolute constant $C>0$ such that for every $n, d\in \mathbb{N}$, 
every convex body $K\subset \mathbb{R}^n$ of volume~$1$, 
and every $f\in\mathcal{P}_d(\mathbb{R}^n)$, one has
\begin{equation}\label{CW-est}
\|f\|_{L^2(K)}^{1/d}\lambda_n(x\in K\colon |f(x)|\le t)
\le C\min(d, n) t^{1/d}\quad \forall t>0,
\end{equation}	
where $\lambda_n$ denotes the standard Lebesgue measure on $\mathbb{R}^n$.
Motivated by this estimate, they formulated the following conjecture:

\begin{conjecture}[see {\cite[Section 6]{CarWr}}]\label{T-osc}
There exists an absolute constant $C\!>\!0$
such that
for every polynomial $f\in \mathcal{P}_d(\mathbb{R}^n)$ satisfying the normalization conditions
$$
\int_{[0, 1]^n}f(x)\, dx=0,\quad \int_{[0, 1]^n}|f(x)|\, dx=1,
$$
the inequality 
\begin{equation}\label{eq-CW-main}
\Bigl|\int_{[0, 1]^n} e^{itf(x)}\, dx\Bigr|
\le \frac{C\min\{d, n\}}{|t|^{1/d}}\quad \forall t\in\mathbb{R}\setminus \{0\}
\end{equation}
holds.
\end{conjecture}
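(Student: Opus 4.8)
The plan is to reduce the oscillatory integral to a regularity statement for a one‑dimensional density, and then to prove that statement for polynomial images of log‑concave measures (and, as in the abstract, of $s$‑concave and product measures). Write $\mu$ for Lebesgue measure on $[0,1]^n$ and $\nu:=f_*\mu$ for the image measure. Since $f$ is a non‑constant polynomial its critical set $\{\nabla f=0\}$ is Lebesgue‑null, so $\nu$ is absolutely continuous; let $\rho$ be its density, a compactly supported probability density on $\mathbb R$. Then $\int_{[0,1]^n}e^{itf(x)}\,dx=\widehat\nu(t)=\widehat\rho(t)$, and the elementary translation identity $2\widehat\rho(t)=\int_{\mathbb R}e^{its}\bigl(\rho(s)-\rho(s-\pi/t)\bigr)\,ds$ gives
\[
\Bigl|\int_{[0,1]^n}e^{itf(x)}\,dx\Bigr|\ \le\ \frac12\,\bigl\|\rho-\rho(\cdot+\pi/|t|)\bigr\|_{L^1(\mathbb R)}.
\]
Thus \eqref{eq-CW-main} follows once one proves a \emph{dimension‑free bound for the $L^1$‑modulus of continuity of $\rho$}, namely $\|\rho-\rho(\cdot+h)\|_{L^1}\le C\min\{d,n\}\,h^{1/d}$ for all $h>0$. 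I would state and prove this for an arbitrary log‑concave probability measure $\mu$ on $\mathbb R^n$ and any $f\in\mathcal P_d(\mathbb R^n)$, with $h^{1/d}$ replaced by $\bigl(h/\inf_{c\in\mathbb R}\|f-c\|_{L^1(\mu)}\bigr)^{1/d}$; for $\mu$ uniform on $[0,1]^n$ with $\int f\,d\mu=0$, $\int|f|\,d\mu=1$, one has $\inf_c\|f-c\|_{L^1(\mu)}\ge\frac12\|f\|_{L^1(\mu)}=\frac12$, so this specializes to the conjecture.

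Next I would recast the modulus‑of‑continuity estimate as a total‑variation bound. With $c_h(s):=\mu(s-h<f\le s)$, the function $c_h$ is absolutely continuous with $c_h'(s)=\rho(s)-\rho(s-h)$ and vanishes at $\pm\infty$, so $\|\rho-\rho(\cdot+h)\|_{L^1}=\mathrm{TV}(c_h)$. The sup‑norm of $c_h$ is exactly the sublevel‑set quantity controlled by the Carbery--Wright inequality \eqref{CW-est}: applying \eqref{CW-est} to $f-c$ and using $\|f-c\|_{L^2(\mu)}\ge\|f-c\|_{L^1(\mu)}\ge\inf_{c'}\|f-c'\|_{L^1(\mu)}$ gives $\|c_h\|_{L^\infty}=\sup_{|I|=h}\mu(f\in I)\le C\min\{d,n\}\,h^{1/d}$. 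The whole difficulty is therefore to control the \emph{oscillation} of $c_h$: one must bound $\mathrm{TV}(c_h)$ by the number of essential monotonicity ``branches'' of $f$ — which should be $O(\min\{d,n\})$ — times the size of a single oscillation, the latter being $O(h^{1/d})$ after normalization.

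To obtain the oscillation bound I would use one‑dimensional reduction in the spirit of Carbery--Wright and Nazarov--Sodin--Volberg \cite{NSV}: via a needle (Lov\'asz--Simonovits--Kannan type) decomposition $\mu=\int\mu_\omega\,d\pi(\omega)$ into one‑dimensional log‑concave measures on segments, chosen so as to bisect $\int|f-c|\,d\mu$, one has $c_h=\int c_h^{\omega}\,d\pi(\omega)$ and hence $\mathrm{TV}(c_h)\le\int\mathrm{TV}(c_h^{\omega})\,d\pi(\omega)$, reducing everything to $n=1$. In dimension one, for a log‑concave measure $\nu_0$ on an interval and $p\in\mathcal P_d(\mathbb R)$, split the interval into at most $\sim d$ subintervals on which both $p$ and $p'$ are monotone; on each such piece $p$ is a monotone change of variables and the corresponding part of the pushforward density is unimodal with a single integrable singularity (of order $<1$) at a zero of $p'$, so its contribution to the total variation is a bounded multiple of the measure of the heaviest $p$‑slab of width $h$ it contains — and that measure is controlled, with an \emph{absolute} constant, by the one‑dimensional sublevel estimate, because the slab meets the piece in a single interval. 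Summing the $O(d)$ pieces, using that the relevant $L^1$‑normalizations add up (this is where the bisection of $\int|f-c|\,d\mu$ enters) together with the elementary fact $d^{1/d}=O(1)$, yields the $L^1$‑modulus bound with a constant linear in $d$; the improvement from $d$ to $\min\{d,n\}$ is then obtained as in \cite{CarWr} by a Remez/{\L}ojasiewicz‑type argument that trades degree for dimension on a convex body.

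The main obstacle is precisely this oscillation/total‑variation control, which goes genuinely beyond the sublevel‑set estimates of \cite{CarWr,NSV}: a degree‑$d$ polynomial really does produce up to $\sim d$ monotonicity branches, the density of $f_*\mu$ is \emph{not} log‑concave, and one must show that the branch contributions cannot all be large and well‑separated at once and, crucially, that they aggregate to the single factor $\min\{d,n\}$ rather than $\min\{d,n\}^2$ or $dn$ — this is the heart of both the one‑dimensional lemma and its passage through the needle decomposition. Two further points need care: arranging the decomposition so that the per‑needle $L^1$‑norms of $f$ add up correctly against the global normalization $\inf_c\|f-c\|_{L^1(\mu)}$ (the usual subtlety in the localization argument), and checking that the same scheme applies to $s$‑concave and to product measures, which should hold since those classes are stable under conditioning on segments — the only property of $\mu$ used on each needle.
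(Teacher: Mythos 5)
Your reduction of \eqref{eq-CW-main} to a dimension-free bound on the $L^1$-modulus of continuity of the pushforward density, normalized by $\inf_c\|f-c\|_{L^1(\mu)}$, is sound and is essentially the paper's own route (via the equivalence \eqref{T-equiv} and Theorem~\ref{T-main-reg}). But the two steps that carry the real content are left as gaps you yourself flag, and as set up they would not go through. First, the one-dimensional oscillation bound: estimating $\mathrm{TV}(c_h)$ by (number of monotonicity branches)$\,\times\,$(largest slab measure), with each slab controlled by the one-dimensional Carbery--Wright bound, produces a factor $d\cdot d$, not $d$; and your proposed fix --- per-branch bounds with absolute constants because the slab meets each branch in a single interval --- has no normalization attached to an individual branch, so a branch on which $p$ is nearly constant can carry essentially the whole needle measure, and the assertion that the branch contributions ``cannot all be large at once'' is precisely what is unproven. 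The paper avoids branch-by-branch bookkeeping altogether: in Lemma~\ref{lem-key} an integration by parts against a cutoff of $f'$ yields the quantity $\int_{\mathbb{R}}|f''|\bigl((f')^2+\varepsilon^2\bigr)^{-1}dt\le \pi r\varepsilon^{-1}$, so the branch count $r\le k-1$ enters only once and linearly, and optimizing $\varepsilon$ against the sublevel estimate of Corollary~\ref{sub-level} gives the linear-in-$k$ bound of Theorem~\ref{T-1d-reg}.

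Second, the needle decomposition as you arrange it fails: preserving $\int|f-c|\,d\mu_\omega\ge1$ for the fixed global $c$ does not lower-bound the per-needle quantity $\inf_{c'}\|f-c'\|_{L^1(\mu_\omega)}$ that your one-dimensional lemma actually requires (a needle on which $f\approx c+1$ satisfies the constraint yet has $\mathrm{TV}(c_h^{\omega})\approx 2$), and Jensen's inequality runs the wrong way for integrating $N_\omega^{-1/d}$ over needles, so the per-needle bounds cannot simply be averaged. The paper never integrates over needles: it fixes the constraints $\int g\,d\nu=0$, $\int g^2\,d\nu\ge a$ and the \emph{linear} functional $\nu\mapsto\int\psi'(g)\,d\nu$, invokes the Fradelizi--Gu\'edon extreme-point theorem (Theorem~\ref{loc-lem}) to reduce to a single $s$-concave measure with $\dim S(\nu)\le 3$, and then handles that low-dimensional case by a separate dimension-dependent argument (Lemma~\ref{lem-dim}) requiring isotropic position, Klartag's density bound, averaging over directions on the sphere, and the Poincar\'e inequality --- none of which appears in your sketch. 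Moreover, the factor $\min\{d,n\}$ emerges from the $s$-concave version of the Carbery--Wright inequality (Corollary~\ref{CW-cor}) carried through this scheme, not from a Remez/\L{}ojasiewicz degree-for-dimension trade, which you invoke but do not supply. In short, the architecture parallels the paper's, but the two decisive lemmas (linear-in-$d$ oscillation control and a localization step compatible with the normalization) are missing, and the aggregation you propose would have to be redesigned along the paper's lines to close them.
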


In their paper, Carbery and Wright proved only an averaged version of the estimate \eqref{eq-CW-main}.
A partial result, with an additional dimensional factor $n^{1/2d}$, was later obtained by I.~Parissis in his PhD thesis~\cite[Theorem~2.12]{Par07}.
Furthermore, 
an estimate of the form~\eqref{eq-CW-main} with a dimension-free constant follows 
from~\cite{Kos}.
More recently, I.~Glazer and D.~Mikulincer~\cite{GM22} established the required estimate, 
with the desired dependence on $d$, under substantially more restrictive 
normalization assumptions on the coefficients of the polynomial 
(see below for the discussion of their result stated in Theorem~\ref{T-GM}).

\vskip .05in

In this paper, we confirm the Carbery--Wright conjecture.

\subsection{Connection to fractional regularity of image measures}

To obtain upper bounds for the oscillatory integral \eqref{integral}, it is sufficient to estimate
$$
\int_{\mathbb{R}^n}\cos(tf(x))g(x)\, dx
\hbox{ and }
\int_{\mathbb{R}^n}\sin(tf(x))g(x)\, dx.
$$
Both of these integrals are of the form
$$
\int_{\mathbb{R}^n}\varphi'(f(x))g(x)\, dx,
$$
where $\varphi\in C_b^\infty(\mathbb{R})$,
$\|\varphi\|_\infty\le |t|^{-1}$, and $\|\varphi'\|_\infty\le 1$. 
This implies that, for the purpose of studying the behavior of the oscillatory integral \eqref{integral}, it is sufficient to estimate the supremum
\begin{equation}\label{eq-sup}
\sup\Bigl\{
\int_{\mathbb{R}^n}\varphi'(f(x))g(x)\, dx\colon \varphi\in C_b^\infty(\mathbb{R}), \|\varphi\|_\infty\le \varepsilon, \|\varphi'\|_\infty\le1 \Bigr\}.
\end{equation}
If $\mu$ denotes the measure with density $g$ with respect to the Lebesgue measure, 
then
$$
\int_{\mathbb{R}^n}\varphi'(f(x))g(x)\, dx
=
\int_{\mathbb{R}^n}\varphi'(f(x))\, \mu(dx)
=\int_{\mathbb{R}}\varphi'(s)\, \mu\circ f^{-1}(ds),
$$
where $\mu\circ f^{-1}$ stands for the image measure of $\mu$ under the mapping $f$,
that is,
$$
\mu\circ f^{-1}(A) = \mu(f^{-1}(A))
$$
for all Borel sets $A$. 
Motivated by this observation, we introduce the following two functionals 
(see~\cite{Kos-FCAA}).
For $\varrho\in L^1(\mathbb{R})$ and $\varepsilon>0$, define
$$
\sigma(\varrho, \varepsilon):=\sup\Bigl\{\int_{\mathbb{R}}\varphi'(t)\varrho(t)\, dt\colon \varphi\in C_b^\infty(\mathbb{R}), \|\varphi\|_\infty\le \varepsilon, \|\varphi'\|_\infty\le1 \Bigr\}.
$$
Similarly, for a bounded Borel measure $\nu$ on $\mathbb{R}$
and $\varepsilon>0$, define
$$
\sigma(\nu, \varepsilon):=\sup\Bigl\{\int_{\mathbb{R}}\varphi'(t)\, \nu(dt)\colon \varphi\in C_b^\infty(\mathbb{R}), \|\varphi\|_\infty\le \varepsilon, \|\varphi'\|_\infty\le1 \Bigr\}.
$$

\noindent
Therefore, the supremum \eqref{eq-sup} coincides with $\sigma(\mu\circ f^{-1}, \varepsilon)=\sigma(\varrho_f, \varepsilon)$,
where $\varrho_f$ denotes the density of the image measure $\mu\circ f^{-1}$, whenever it exists.
It is known that the functional 
$\sigma(\varrho, \cdot)$ describes the regularity properties of the function 
$\varrho$. Namely, the following two-sided estimate holds (see \cite[Theorem~2.1]{Kos-MS}):
\begin{equation}\label{T-equiv}
2^{-1}\omega(\varrho, \varepsilon)\le \sigma(\varrho, \varepsilon)\le 6\,\omega(\varrho, \varepsilon),\quad 
\forall\varrho\in L^1(\mathbb{R}),\quad \forall \varepsilon>0,
\end{equation}
where
$$
\omega(\varrho, \varepsilon): = \sup\limits_{|h|\le \varepsilon}\int_{\mathbb{R}}
|\varrho(x+h) - \varrho(x)|\, dx
$$ 
is the classical integral modulus of continuity 
(see \cite[Chapter 2, \S 7]{DL93} or~\cite{KT20}).

We recall that the modulus of continuity 
$\omega(\varrho, \cdot)$ is commonly used to define certain function spaces.
For example, see \cite{BIN} and \cite{Stein}, the Nikolskii–Besov space 
$B^\alpha_{1,\infty}(\mathbb{R})$, $\alpha\in (0,1)$, consists of all functions 
$\varrho\in L^1(\mathbb{R})$ such that 
$$
\|\varrho\|_{\rm \dot{B}^\alpha_{1,\infty}(\mathbb{R})}:=\sup_{\varepsilon>0}\varepsilon^{-\alpha}\omega(\varrho, \varepsilon) <\infty.
$$
When $\alpha=1$, we recover the definition of the class of functions of bounded variation~$BV(\mathbb{R})$ with the semi-norm
$$
\|\varrho\|_{\rm \dot{BV}(\mathbb{R})}:=
\sup_{\varepsilon>0}\varepsilon^{-1}\omega(\varrho, \varepsilon).
$$

To summarize, continuing the research initiated in \cite{BKZ}, \cite{Kos}, and \cite{Kos-FCAA},
we address a more general problem and study {\it dimension-free} upper bounds for the modulus 
$\sigma(\mu\circ f^{-1}, \cdot)$ of image measures
$\mu\circ f^{-1}$, where 
$f\colon \mathbb{R}^n\to\mathbb{R}$ is a polynomial mapping and $\mu$
is a suitably regular measure on $\mathbb{R}^n$.
In particular, we prove the following result.

\begin{theorem}\label{T-main-reg}
There exists an absolute constant 
$C>0$ such that for all 
$n, d\in \mathbb{N}$, for every convex body 
$K\subset \mathbb{R}^n$ of volume $1$, 
and for every  polynomial
$f\in \mathcal{P}_d(\mathbb{R}^n)$, one has
$$
\sigma(\mu_K\circ f^{-1}, \varepsilon)
\le 
\frac{C\min\{d, n\}} {(\mathbb{D}_K f)^{1/2d}}\cdot \varepsilon^{1/d}\quad \forall \varepsilon>0,
$$
where $\mu_K$ denotes the restriction of the Lebesgue measure to $K$
and
$$
\mathbb{D}_Kf:=\int_K\Bigl(f(x) - \int_Kf(y)\, dy\Bigr)^2\, dx.
$$
In other words, when $f$ is a
non-constant polynomial of degree at most $d$,
the distribution density $\varrho_f$ of the measure
$\mu_K\circ f^{-1}$ belongs to the 
Nikolskii–Besov space 
$B^{1/d}_{1,\infty}(\mathbb{R})$,
and
$$
\|\varrho_f\|_{\rm \dot{B}^{1/d}_{1,\infty}(\mathbb{R})}
\le 
\frac{C\min\{d, n\}} {(\mathbb{D}_K f)^{1/2d}}.
$$
\end{theorem}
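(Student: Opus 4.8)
The plan is to bound the functional $\sigma(\mu_K\circ f^{-1},\varepsilon)$ directly, feeding the Carbery--Wright sublevel inequality \eqref{CW-est} into an integration-by-parts scheme and then invoking the equivalence \eqref{T-equiv} to recover the Nikolskii--Besov formulation.

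First I would reduce the problem. Replacing $f$ by $f-\int_Kf$ only shifts the image measure $\mu_K\circ f^{-1}$, hence affects neither $\sigma(\mu_K\circ f^{-1},\cdot)$ nor $\mathbb{D}_Kf$; so we may assume $\int_Kf=0$, i.e. $\mathbb{D}_Kf=\|f\|_{L^2(K)}^2$. Both sides of the claimed estimate are also invariant under volume-preserving affine changes of variable $x\mapsto Tx$ (with $K$ replaced by $TK$ and $f$ by $f\circ T^{-1}$), so we may place $K$ in a convenient position, e.g.\ the isotropic one. It then suffices to estimate $\int_K\varphi'(f(x))\,dx$ for every $\varphi\in C_b^\infty(\mathbb{R})$ with $\|\varphi\|_\infty\le\varepsilon$ and $\|\varphi'\|_\infty\le1$.

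The engine is a one-dimensional integration by parts along the line sections of $K$. Fix a unit vector $e$ and a threshold $\delta>0$; writing $x=te+y$ with $y\in e^\perp$, convexity of $K$ makes each section $J_y=\{t\colon te+y\in K\}$ an interval. Where $|\partial_ef|<\delta$ on $J_y$ we bound $|\varphi'(f)|$ by $1$; the complement splits into $O(d)$ subintervals (the polynomial $t\mapsto\partial_ef(te+y)$ has degree $\le d-1$), and on each of them $\varphi'(f(te+y))=\frac{d}{dt}[\varphi(f(te+y))]\big/\partial_ef(te+y)$, so an integration by parts produces boundary terms of size $\le\varepsilon/\delta$ together with $\varepsilon$ times the total variation of $1/\partial_ef$ along the section; the latter is $\le 2d/\delta$, since $1/\partial_ef$ is piecewise monotone with $O(d)$ branches and takes values in $[-1/\delta,1/\delta]$. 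Integrating over $y$, with $P_{e^\perp}K$ denoting the orthogonal projection of $K$ onto $e^\perp$, we get
\begin{equation*}
\Bigl|\int_K\varphi'(f)\,dx\Bigr|\ \le\ \frac{Cd^2\varepsilon}{\delta}\,\lambda_{n-1}(P_{e^\perp}K)\ +\ \lambda_n\bigl(x\in K\colon|\partial_ef(x)|<\delta\bigr).
\end{equation*}
Now $\partial_ef\in\mathcal{P}_{d-1}(\mathbb{R}^n)$, so \eqref{CW-est} bounds the last term by $C\min\{d-1,n\}\,\|\partial_ef\|_{L^2(K)}^{-1/(d-1)}\,\delta^{1/(d-1)}$; choosing $\delta$ to balance the two terms, one checks that all explicit $d$- and $n$-dependent factors collapse to an absolute constant, leaving
\begin{equation*}
\sigma(\mu_K\circ f^{-1},\varepsilon)\ \le\ C\,\bigl(\lambda_{n-1}(P_{e^\perp}K)\bigr)^{1/d}\,\|\partial_ef\|_{L^2(K)}^{-1/d}\,\varepsilon^{1/d}.
\end{equation*}

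The main obstacle is the choice of the direction $e$: one must make $\lambda_{n-1}(P_{e^\perp}K)$ small while keeping $\|\partial_ef\|_{L^2(K)}$ comparable to $\sqrt{\mathbb{D}_Kf}$, with combined loss at most $C\min\{d,n\}$. When $n\le d$ this is reasonably soft: after passing to isotropic position one takes $e$ to be a direction of maximal $L^2$-sensitivity of $f$, bounds $\lambda_{n-1}(P_{e^\perp}K)$ by a power of $n$, and uses a Poincaré-type lower bound $\|\partial_ef\|_{L^2(K)}^2\gtrsim\mathbb{D}_Kf$ up to a power of $n$; since $n^{c/d}$ stays bounded for $n\le d$, the surviving factor is exactly $C\min\{d,n\}=Cn$. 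The genuinely delicate regime is $n>d$, where no single section direction has an $O(1)$ shadow; there one instead iterates the integration-by-parts step along a sequence of carefully chosen directions, each step lowering the degree by one, the process stopping after at most $\min\{d,n\}$ steps, either in the bare inequality \eqref{CW-est} or in the fact that some top-order derivative of $f$ is a nonzero constant. This recursion mirrors the hierarchy of higher-order derivatives used by Carbery, Christ, and Wright in their proof of \eqref{CW-est}, and pushing the bookkeeping through so that the final constant is precisely $C\min\{d,n\}$ and the exponent of $\mathbb{D}_Kf$ is precisely $1/(2d)$ is the technical core of the argument. The equivalence \eqref{T-equiv} then turns the bound on $\sigma$ into $\|\varrho_f\|_{\dot{B}^{1/d}_{1,\infty}(\mathbb{R})}\le C\min\{d,n\}\,(\mathbb{D}_Kf)^{-1/2d}$.
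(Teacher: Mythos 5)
Your first half — slicing $K$ along a fixed direction, integrating by parts away from the set $\{|\partial_e f|<\delta\}$, feeding the Carbery--Wright bound \eqref{CW-est} for the sublevel set of $\partial_e f\in\mathcal{P}_{d-1}$, and optimizing in $\delta$ — is essentially the paper's own engine (Lemma~\ref{lem-key}, Corollary~\ref{cor-pol}, Corollary~\ref{CW-cor} and Step~1 of Lemma~\ref{lem-dim}); a small slip is that after balancing the two terms the factor $\min\{d,n\}^{1-1/d}$ does not ``collapse to an absolute constant,'' though it is harmless since it is at most $\min\{d,n\}$. The genuine gap is in the second half, which is exactly where the theorem lives: the regime $n>d$, i.e.\ dimension-freeness. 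You acknowledge that no single direction works there and propose to ``iterate the integration-by-parts step along a sequence of carefully chosen directions, each step lowering the degree by one,'' but you give no mechanism by which the dimension-dependent losses disappear. The losses in your one-direction step are geometric, not degree-related: the Poincar\'e constant for an isotropic log-concave measure is of order $n$, picking a best direction costs another factor of order $n$ (since $\max_e\|\partial_e f\|_{L^2}^2\ge \frac1n\int|\nabla f|^2$), and the shadow $\lambda_{n-1}(P_{e^\perp}K)$ is again only polynomially bounded in $n$. These factors enter raised to the power $1/d$ (or $1/(2d)$), so they are absorbed only when $n\le d$; lowering the degree of the polynomial at each step of a recursion does nothing to remove them, and after up to $\min\{d,n\}$ steps you would still carry powers of $n$ that are unbounded when $n\gg d$. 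So as written the argument proves only the case $n\le d$ (plus a dimension-dependent bound in general), which is the content of the paper's Lemma~\ref{lem-dim}, not of Theorem~\ref{T-main-reg}.

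The paper's resolution of precisely this obstacle is a different tool: the Fradelizi--Gu\'edon localization lemma (Theorem~\ref{loc-lem}). One first proves the dimension-\emph{dependent} estimate (Lemma~\ref{lem-dim}: isotropic position, Klartag's density bound and $L(\mu)\le c\sqrt n$ to control $\|D_\theta\varrho\|_{\rm TV}$, averaging the directional bound over the sphere, then the Poincar\'e inequality), and then, fixing the three constraints $\int g\,d\nu\ge0$, $\int(-g)\,d\nu\ge0$, $\int(g^2-a)\,d\nu\ge0$ with $g=f-\mathbb{E}_\mu f$, applies localization to the convex functional $\nu\mapsto\int\psi'(g)\,d\nu$ over $s$-concave measures supported in $K$. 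This reduces the supremum to $s$-concave measures whose support spans an affine subspace of dimension at most $3$, where the dimension-dependent constants $C(1),C(2),C(3)$ are harmless, yielding Theorem~\ref{CW-T} and hence Theorem~\ref{T-main-reg} (since $\mu_K$ is $1/n$-concave). Your proposal would need either this localization step or some genuine substitute for it; the iteration you sketch is not one.
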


\noindent
Conjecture~\ref{T-osc} is an immediate consequence of this result. 

Since the Carbery--Wright estimate~\eqref{CW-est} is sharp up to a constant
factor (see the discussion after Theorem 2 in \cite{CarWr}), 
and since estimates of the modulus 
$\sigma(\mu\circ f^{-1}, \cdot)$
imply estimates for the measure of the sublevel sets 
(see Theorem~\ref{T-meas}), 
Theorem~\ref{T-main-reg} is also sharp up to a constant factor.

To prove Theorem~\ref{T-main-reg}, in Section~\ref{sec-one-dim} we first examine the one-dimensional case and establish a complete analog of the van der Corput lemma for the regularity of images of measures with densities of bounded variation. Namely,
we prove the following theorem.

\begin{theorem}\label{T-1d}
There exists an absolute constant $C>0$ such that for every $k \in \mathbb{N}$, $k \ge 2$, for every probability measure $\nu$ on $\mathbb{R}$ with a density $\varrho$ of bounded variation, and for every $f \in C^\infty(\mathbb{R})$ satisfying $f^{(k)}(t) \ge 1$ for all $t \in \mathbb{R}$, one has 
$$
\sigma\bigl(\nu\circ f^{-1}, \varepsilon\bigr)
\le 
Ck \|\varrho \|_{\rm \dot{BV}(\mathbb{R})}\cdot \varepsilon^{1/k}\quad \forall \varepsilon>0.
$$
\end{theorem}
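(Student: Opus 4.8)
The plan is to reduce the statement to the classical van der Corput lemma by exploiting the two-sided equivalence \eqref{T-equiv} together with the structure of the functional $\sigma$. First observe that it suffices to bound
$$
\int_{\mathbb{R}}\varphi'(t)\,\nu\circ f^{-1}(dt)
=\int_{\mathbb{R}}\varphi'(f(x))\,\varrho(x)\,dx
$$
for every $\varphi\in C_b^\infty(\mathbb{R})$ with $\|\varphi\|_\infty\le\varepsilon$ and $\|\varphi'\|_\infty\le1$. Writing $\psi:=\varphi\circ f$, we have $\psi'(x)=\varphi'(f(x))f'(x)$, which is the wrong object; instead the key idea is to integrate by parts \emph{after} grouping $\varphi'(f(x))\,dx$ as a derivative in a variable adapted to $f$. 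Concretely, introduce $\Phi(x):=\int_{-\infty}^{x}\bigl(\varphi(f(y))\bigr)'\,dy$ — no, the cleaner route is: since $\|\varphi\|_\infty\le\varepsilon$, write $\int_{\mathbb{R}}\varphi'(f(x))\varrho(x)\,dx$ and compare it with the same integral where $\varphi$ is replaced by a translate of its argument, converting the problem into an $L^1$-modulus-of-continuity estimate for the image density via \eqref{T-equiv}.

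Here is the cleanest line of attack. By \eqref{T-equiv}, $\sigma(\nu\circ f^{-1},\varepsilon)\le 6\,\omega(\varrho_f,\varepsilon)$ once we know $\nu\circ f^{-1}$ has a density $\varrho_f\in L^1$; but rather than prove absolute continuity directly, I would instead bound $\sigma$ from the definition and use the one-dimensional van der Corput lemma as a black box on the function $\varphi\circ f$ restricted to the level structure of $f$. Specifically: split $\mathbb{R}$ into the (at most $k-1$) maximal intervals of monotonicity of $f^{(k-1)}$, hence into $O(k)$ intervals on each of which $f'$ is monotone and does not change sign except possibly once; on each such interval $I$ decompose further, following the standard van der Corput induction, into a piece where $|f^{(k-1)}|$ is small (of length $\lesssim \delta$, contributing $\|\varrho\|_\infty$ times its length, but we only have $\varrho\in BV$, so here we instead use $\int_I|\varrho|\le \|\varrho\|_{L^1}$ locally) and pieces where $|f^{(k-1)}|\gtrsim\delta$, on which one applies the inductive hypothesis in $k$. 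The quantity to be tracked is $\int_I\varphi'(f(x))\varrho(x)\,dx$; integrating by parts moves the derivative off $\varphi$ and onto $1/f'$ and onto $\varrho$ (producing the $\|\varrho\|_{\dot{BV}}$ factor precisely because $d\varrho$ is a finite measure), while the boundary terms are controlled by $\|\varphi\|_\infty\le\varepsilon$ and the monotonicity of $1/f'$.

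Carrying this out: on a monotonicity interval of $f'$ where moreover $|f'|\ge\delta$, integration by parts gives
$$
\Bigl|\int_I\varphi'(f(x))\varrho(x)\,dx\Bigr|
=\Bigl|\int_I\frac{d}{dx}\bigl(\varphi(f(x))\bigr)\frac{\varrho(x)}{f'(x)}\,dx\Bigr|
\le \|\varphi\|_\infty\Bigl(\frac{|\varrho|}{|f'|}\Big|_{\partial I}+\int_I\Bigl|d\Bigl(\frac{\varrho}{f'}\Bigr)\Bigr|\Bigr)
\lesssim \frac{\varepsilon}{\delta}\bigl(\|\varrho\|_\infty+\|\varrho\|_{\dot{BV}}\bigr),
$$
using that $1/f'$ is monotone and bounded by $1/\delta$ on $I$ so that $\varrho/f'$ has variation $\lesssim \|\varrho\|_{\dot{BV}}/\delta + \|\varrho\|_\infty/\delta$; and one absorbs $\|\varrho\|_\infty\le \|\varrho\|_{\dot{BV}}$ since $\varrho$ is a BV probability density. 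For the set where $|f^{(k-1)}|<\delta$ we pass to the $(k-1)$-st derivative and recurse, exactly as in the classical proof, and optimize $\delta\sim\varepsilon^{1/k}$ at the end. The main obstacle is the bookkeeping of the BV norm through the successive integrations by parts and subdivisions: each recursion step multiplies $\varrho$ by a factor $1/f^{(j)}$ that is only piecewise monotone, and one must verify that the total variation of the accumulated weight grows only linearly in $k$ (not exponentially), which is what yields the stated $Ck$ dependence; controlling the number of sign changes and monotonicity intervals at each level, and summing the geometric-type series in $\delta$, is the technical heart of the argument.
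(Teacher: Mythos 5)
Your overall shape is recognizable and partially overlaps the paper: the reduction to bounding $\int_{\mathbb{R}}\varphi'(f(x))\varrho(x)\,dx$ with $\|\varphi\|_\infty\le\varepsilon$, $\|\varphi'\|_\infty\le1$ is exactly the right start, and your displayed integration by parts on a region where $|f'|\ge\delta$, with the derivative falling on $\varrho$ (producing $\|\varrho\|_{\dot{BV}}$) and on $1/f'$ (controlled by monotonicity/sign changes of $f''$), is essentially the paper's Lemma~\ref{lem-key}. But the rest of your plan diverges, and the divergence is where the gap lies. You propose to handle the remaining contribution by running the classical van der Corput induction on $k$: split on where $|f^{(k-1)}|$ is small or large and ``apply the inductive hypothesis,'' then optimize $\delta\sim\varepsilon^{1/k}$. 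That recursion does give the right exponent $\varepsilon^{1/k}$, but not the stated constant $Ck$: exactly as in the textbook induction (where $c_k\approx 2c_{k-1}+2$, hence exponential in $k$), each inductive step at least doubles the constant because the large set $\{|f^{(k-1)}|\ge\delta\}$ consists of two intervals, and here it additionally inflates the BV norm, since restricting $\varrho$ to a subinterval creates boundary jumps of size up to $\|\varrho\|_\infty$ at every level. So what your outline proves, if completed, is a bound with constant $C^k$ rather than $Ck$. You yourself flag the linear-in-$k$ bookkeeping as ``the technical heart,'' but you give no mechanism for it, and this is precisely the missing idea rather than a routine verification. (Two smaller slips: on the small set $\{|f^{(k-1)}|<\delta\}$ the bound $\int_I|\varrho|\le\|\varrho\|_{L^1}$ gives no smallness in $\delta$ — one must use $\|\varrho\|_\infty\lesssim\|\varrho\|_{\dot{BV}}$ times the length $\lesssim\delta$, which you only invoke later; and the recursion direction is garbled, since one recurses on the set where $|f^{(k-1)}|$ is \emph{large}, not small.)

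The paper avoids the recursion altogether, and that is how it gets the linear constant. After the single integration by parts (with the factor $k-1$ entering only through the number of sign intervals of $f''$, via $\int |f''|/((f')^2+\varepsilon^2)\,dt\le\pi(k-1)\varepsilon^{-1}$), the leftover term $\|\varphi'\|_\infty\int_{\{|f'|\le2\varepsilon\}}|\varrho|\,dt$ is a \emph{non-oscillatory sublevel-set} quantity, and it is bounded by Corollary~\ref{sub-level}: a weighted Carbery--Christ--Wright estimate $\mu(|f'|\le 2\varepsilon)\le 8e(k-1)\|\varrho\|_\infty(2\varepsilon)^{1/(k-1)}$ for $\mu=|\varrho|\,dt$, proved via the divided-difference identity of Lemma~\ref{CW-lem} together with a point-selection lemma using the distribution function of $\mu$ on the sublevel set. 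A single optimization in $\varepsilon$ then yields Theorem~\ref{T-1d-reg} with constant $\approx Ck$, and a mollification step passes from smooth $\varrho$ to general $BV$ densities; Theorem~\ref{T-1d} follows from $\|\varrho\|_\infty\le\|\varrho'\|_{\rm TV}$. If you want to salvage your route, you would need to replace the ``inductive hypothesis in $k$'' step by some sublevel-set estimate for $\{|f'|\le\delta\}$ with a constant linear in $k$; the naive multiscale iteration through $f'',\dots,f^{(k-1)}$ gives at best order $k^2$, so the divided-difference argument (or an equally sharp substitute) is genuinely needed for the claimed $Ck$.
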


In Section~\ref{sec-mult-dim}, we move on to the multidimensional setting, following the ideas of \cite{Kos} and \cite{Kos-IMRN}. First, in Corollary~\ref{cor-pol}, by applying the one-dimensional estimate along each fixed direction, we obtain a general bound for the modulus of continuity $\sigma(\mu \circ f^{-1}, \cdot)$, where $f \in \mathcal{P}_d(\mathbb{R}^n)$ and $\mu$ is a measure with a density $\varrho$ of bounded variation. This estimate involves the total variation norm of the directional derivative of $\varrho$ and the measure of the sublevel sets of the directional derivative of~$f$. To control these two parameters, we restrict ourselves to the class of so-called $s$-concave measures with $s \ge 0$ (see Definition~\ref{def-s-conc} below). In particular, the classical Brunn–Minkowski inequality implies that uniform distributions on convex sets are $1/n$-concave. 
We then use several known properties of 
$s$-concave measures (in particular, of log-concave measures) to control the norm of the directional derivative. Furthermore, we employ a suitable reformulation of the Carbery–Wright inequality~\eqref{CW-est}, together with the dimensional Poincaré inequality, to estimate the measure of the sublevel sets.
All of this implies a dimensional version of Theorem~\ref{T-main-reg} that is valid for the class of $s$-concave measures (see Lemma~\ref{lem-dim}). Finally, to complete the proof and control the dimensional dependence, we apply the so-called localization lemma from~\cite{FrGue} to reduce the estimate in the general multidimensional case to estimates in low-dimensional settings.
This leads to Theorem~\ref{CW-T}, which provides a dimension-free estimate for $\sigma(\mu \circ f^{-1}, \cdot)$, valid for all $s$-concave measures~$\mu$, thereby implying Theorem~\ref{T-main-reg}.

\subsection{The case of product measures}

Apart from uniform distributions on convex subsets of 
$\mathbb{R}^n$ and their generalizations (see Definition \ref{def-s-conc}), we also consider product measures and study the regularity properties of their polynomial images. In order to formulate the results, we need to introduce the following auxiliary definitions.

\begin{definition}\label{def-pol}
For $d,m\in\mathbb{N}$, let $\mathcal{P}_{d,m}(\mathbb{R}^n)\subset \mathcal{P}_d(\mathbb{R}^n)$ 
denote the space of all algebraic polynomials of total degree at most~$d$ whose individual degree does not exceed~$m$, that is, functions $f$ of the form
$$
f(x):=\sum_{\substack{j_1+\ldots+j_n\le d\\ \max\{j_1, \ldots, j_n\}\le m}}a_{j_1,\ldots, j_n}x_1^{j_1}\ldots x_n^{j_n}.
$$	
\end{definition}

\noindent
In particular,
$\mathcal{P}_d(\mathbb{R}^n) = \mathcal{P}_{d,d}(\mathbb{R}^n)$.

\begin{definition}
For a polynomial $f\in\mathcal{P}_d(\mathbb{R}^n)$
 of the form
$$
f(x):=\sum_{j_1+\ldots+j_n\le d}a_{j_1,\ldots, j_n}x_1^{j_1}\ldots x_n^{j_n},
$$
let
$$
d(f):=\max\{j_1+\ldots+j_n\colon 
a_{j_1,\ldots, j_n}\ne 0\},
$$
$$
[f]_2:=\Bigl(\sum_{j_1+\ldots+j_n= d(f)}a_{j_1,\ldots, j_n}^2\Bigr)^{1/2},
$$
$$
[f]_\infty:= \max\{|a_{j_1,\ldots, j_n}|\colon j_1+\ldots+j_n= d(f)\}.
$$
\end{definition}

We can now state two main results concerning the regularity of polynomial images of product measures.

\begin{theorem}\label{reg-prod-1}
There exists an absolute constant $C>0$ such that for all $d,n\in\mathbb{N}$, for any probability measures $\nu_1,\ldots,\nu_n$ on $\mathbb{R}$, each with a density $\varrho_j$ of bounded variation, and for any non-constant polynomial $f\in\mathcal{P}_d(\mathbb{R}^n)$, one has	
$$
\sigma\bigl((\otimes_{j=1}^n \nu_j)\circ f^{-1}, \varepsilon\bigr)
\le 
C\min\{d, n\}\bigl(1+\max_{1\le j\le n}\|\varrho_j\|_{\rm \dot{BV}(\mathbb{R})}\bigr)[f]_2^{-1/d}
\varepsilon^{1/d}
$$for every $\varepsilon>0$.
\end{theorem}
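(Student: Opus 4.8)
The plan is to reduce the multidimensional estimate to the one-dimensional van der Corput-type bound of Theorem~\ref{T-1d} by slicing along a carefully chosen direction, exactly as in the strategy outlined for Theorem~\ref{T-main-reg}, but now adapted to product measures rather than $s$-concave ones. First I would invoke the elementary fact (a standard consequence of the structure of $\sigma$) that for the image of a product measure it suffices to control, for \emph{some} coordinate direction $e_j$, the one-dimensional modulus $\sigma(\nu_j\circ (t\mapsto f(x',x_j+t))^{-1},\varepsilon)$ integrated over the remaining variables $x'$; thus after fixing all but one variable we are reduced to a genuinely one-variable polynomial $g(x_j):=f(x',\cdot)$ of degree at most $d$ in $x_j$, with the measure $\nu_j$ whose density $\varrho_j$ has bounded variation. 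Applying Theorem~\ref{T-1d} to $g$ with $k=d(g)$ (the degree of $g$ as a polynomial in $x_j$) gives a bound of order $d\,\|\varrho_j\|_{\dot{BV}}\,(\inf|g^{(d(g))}|)^{-1/d}\varepsilon^{1/d}$, where $g^{(d(g))}$ is a constant equal to $d(g)!$ times the leading coefficient of $g$.

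The crux, then, is to choose the coordinate $j$ so that this leading coefficient is bounded below in a way that recovers the factor $[f]_2^{-1/d}$. This is the step I expect to be the main obstacle. The point is that, writing $f$'s top-degree part as $f_{d(f)}(x)=\sum_{|\beta|=d(f)}a_\beta x^\beta$, the leading coefficient (in $x_j$) of the slice $f(x',x_j)$ is a polynomial in $x'$ whose top-degree part involves exactly those $a_\beta$ with $\beta_j$ maximal; one must argue that for a good choice of $j$ and of the base point $x'$ (drawn from the product of the remaining $\nu_i$), this quantity is, with controlled probability, of size at least a constant times $[f]_\infty$ or $[f]_2$. Here I would use a combinatorial/pigeonhole argument to pick $j$ maximizing the relevant partial degree, followed by an anti-concentration estimate — the one-dimensional Carbery--Wright / Remez-type inequality for polynomials against $BV$ densities, or the inequality \eqref{CW-est} applied in the remaining variables — to guarantee that the leading coefficient does not concentrate near zero on the support of $\otimes_{i\ne j}\nu_i$. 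The $1+\max_j\|\varrho_j\|_{\dot{BV}}$ factor (rather than a bare $\max$) enters precisely because these anti-concentration bounds degrade when the densities are close to non-integrable, so one pays an additive constant.

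Finally I would assemble the pieces: integrating the one-dimensional bound over $x'$, using the chosen-direction lower bound on the leading coefficient on a set of $\otimes_{i\ne j}\nu_i$-measure bounded below, and absorbing the exceptional set (where the leading coefficient is too small) into the trivial estimate $\sigma(\cdot,\varepsilon)\le 2$ valid for any probability measure. The $\min\{d,n\}$ factor rather than $d$ comes, as in Theorem~\ref{T-main-reg}, from an alternative argument when $n<d$: if there are few variables one slices more cleverly (or applies the localization/low-dimensional reduction), so that the effective degree one loses is $n$ rather than $d$. Combining the two regimes yields the stated $C\min\{d,n\}$ constant. The homogeneity in $\varepsilon$ and the exponent $1/d$ are automatic from Theorem~\ref{T-1d} since $d(g)\le d$ always, and $\varepsilon^{1/d}\le\varepsilon^{1/d(g)}$ for $\varepsilon\le 1$ while the range $\varepsilon>1$ is again handled by the trivial bound.
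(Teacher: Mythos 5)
There is a genuine gap, and it sits exactly at the step you yourself flag as ``the main obstacle'': the choice of a coordinate $j$ for which the leading coefficient of the one-variable slice is comparable to $[f]_2$. No such coordinate need exist. For instance, for $f(x_1,x_2)=\delta x_1^{d}+\delta x_2^{d}+x_1^{\lceil d/2\rceil}x_2^{\lfloor d/2\rfloor}$ with $\delta$ tiny, the maximal power of $x_1$ (and of $x_2$) is $d$, so for either coordinate the leading coefficient of the slice is the constant $\delta$, while $[f]_2\approx 1$; a pigeonhole choice of direction plus a lower bound on the slice's top coefficient by $c[f]_2$ or $c[f]_\infty$ therefore cannot work. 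The fallback you suggest --- anti-concentration of the leading-coefficient polynomial in the remaining variables --- is not available off the shelf: the Carbery--Wright bound \eqref{CW-est} is for Lebesgue measure on convex bodies, not for products of general $BV$ densities, so you would have to invoke the very theorem being proved (or set up an induction on degree/dimension). That inductive slicing scheme is precisely what the paper carries out for Theorem~\ref{ind-deg}/Theorem~\ref{reg-prod-2}, and there it produces constants $C(d,m)$ and logarithmic losses rather than an absolute constant times $\min\{d,n\}$ --- strong evidence that the clean $[f]_2^{-1/d}\varepsilon^{1/d}$ bound is not reachable along this route. Two further points: absorbing the exceptional set $\{|{\rm leading\ coeff}|\le s\}$ by the trivial bound $\sigma\le 1$ with a fixed threshold destroys the decay in $\varepsilon$ (one must integrate $|a(x')|^{-1/d}$ against the anti-concentration tail, which again needs the missing input), and your explanation of the $\min\{d,n\}$ factor (``slice more cleverly when $n<d$'') is not an argument.

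For comparison, the paper proves Theorem~\ref{reg-prod-1} quite differently: each density $\varrho_j$ (split into positive and negative parts) is written as a convex mixture of uniform distributions on intervals via the Bobkov--Chistyakov--G\"otze representation (Theorem~\ref{lem-BChG}), whose mixing measure satisfies $\int (b-a)^{-1}\,\pi(da\,db)=\tfrac12\|D_1\varrho\|_{\rm TV}$; each rectangle $\prod_j[a_j,b_j]$ is rescaled affinely to the unit cube, and the transformed quantity $[g]_2^{-1/d(f)}$ is controlled by convexity in terms of the factors $(b_j-a_j)^{-1}$, which integrate to the $1+\max_j\|\varrho_j\|_{\rm\dot{BV}}$ factor. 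On the cube one applies Proposition~\ref{cor-coeff}, i.e.\ the dimension-free localization-based estimate of Theorem~\ref{CW-T} combined with the Glazer--Mikulincer variance lower bound $\|f-\mathbb{E}f\|_{L^2(Q^n)}\ge c^{d}[f]_2$; these two ingredients, absent from your proposal, are what produce the absolute constant, the $\min\{d,n\}$ factor, and the $[f]_2$ dependence without logarithms.
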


\begin{theorem}\label{reg-prod-2}
For $d,m \in \mathbb{N}$ with $d \ge m$, there exists a constant $C(d,m)$, depending only on $d$ and $m$, such that for any probability measures $\nu_1, \ldots, \nu_n$ on~$\mathbb{R}$, each with a density $\varrho_j$ of bounded variation, and for any non-constant polynomial~$f\in\mathcal{P}_{d, m}(\mathbb{R}^n)$,
one has
\begin{align*}
\sigma\bigl((&\otimes_{j=1}^n \nu_j)\circ f^{-1}, \varepsilon\bigr)
\\
&\le C(d, m)\bigl(1+\max_{1\le j\le n}\|\varrho_j\|_{\rm \dot{BV}(\mathbb{R})}\bigr)^{d/m} [f]_\infty^{-1/m} \varepsilon^{1/m}\bigl(|\ln ([f]_\infty^{-1}\varepsilon)|^{d-m}+1\bigr)
\end{align*}
for every $\varepsilon>0$.
\end{theorem}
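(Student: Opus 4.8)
The plan is to prove the estimate by induction on the individual degree $m$, reducing the case of degree $m$ to the case of degree $m-1$ by integrating out one variable at a time, while keeping careful track of how the coefficient functional $[\,\cdot\,]_\infty$ behaves. First I would set up the one-variable reduction: fix a variable, say $x_n$, in which $f$ attains its "top" behaviour, and write $f(x', x_n) = \sum_{k=0}^{m} c_k(x') x_n^k$ where the $c_k$ are polynomials in $x' = (x_1,\dots,x_{n-1})$. The key observation is that if the leading-degree part of $f$ has a monomial with $j_n \ge 1$, then $\partial_{x_n}^{j_n} f$ (or an appropriate iterated derivative in $x_n$) has a leading coefficient that is, up to a combinatorial constant depending only on $d$, again controlled below by $[f]_\infty$. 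Applying the one-dimensional van der Corput-type bound of Theorem~\ref{T-1d} along the $x_n$-fibres — with $k = j_n \le m$ there, after rescaling the fibre so the relevant derivative is $\ge 1$ — produces a factor $\varepsilon^{1/j_n}$ and a factor that is a negative power of (the sublevel-set measure governed by) the coefficient, together with the $\dot{BV}$-norm of $\varrho_n$. The residual object after this step is an oscillatory-type integral over $\mathbb{R}^{n-1}$ against $\otimes_{j<n}\nu_j$, whose phase is a polynomial in $\mathcal{P}_{d-j_n,\,m}(\mathbb{R}^{n-1})$ — but crucially with individual degree now at most $m-1$ in the "bad" variable if we chose $j_n$ maximally; iterating the procedure, or applying the reformulation of the Carbery–Wright sublevel inequality in the remaining variables, closes the recursion.

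Concretely, the cleanest route is probably the one indicated implicitly by the statement's shape: write the reduction so that each application of the one-dimensional estimate of Theorem~\ref{T-1d} (with some $2 \le k \le m$) peels off one logarithmic factor and contributes a power of $\varepsilon$ that is at least $\varepsilon^{1/m}$, and bound the number of iterations by $d - m$ (this is exactly the number of "extra" degrees beyond $m$ that must be dissipated), which accounts for the exponent $d-m$ on the logarithm. The term $\bigl(1 + \max_j \|\varrho_j\|_{\dot{BV}}\bigr)^{d/m}$ should emerge because at most $\lceil d/m\rceil$ distinct variables can carry individual degree $m$, so the multiplicative $\dot{BV}$-losses compound at most that many times; one uses $(1+a)(1+b) \le (1+\max(a,b))^2$ to collect these. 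The passage between $\sigma$ and the integral modulus of continuity via the two-sided bound \eqref{T-equiv} lets one move freely between the measure-theoretic and the oscillatory formulations during the induction.

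The main obstacle, I expect, is not any single analytic estimate but the bookkeeping of the recursion: ensuring that after integrating out a variable the residual phase still lies in $\mathcal{P}_{d', m'}(\mathbb{R}^{n-1})$ with $d'$ and $m'$ decreasing in a way that makes the induction terminate, and that the coefficient functional $[f]_\infty$ of the residual phase is still comparable (up to $C(d,m)$) to that of the original $f$ — in particular, that the leading-degree coefficients do not all get killed by the differentiation and rescaling. This requires choosing the splitting variable and the differentiation order $j_n$ correctly at each stage (pick a leading-degree monomial $x_1^{j_1}\cdots x_n^{j_n}$ with $|\alpha| = d(f)$ realising $[f]_\infty$, then differentiate in a variable with $j_i$ as large as possible), and verifying that the union bound over the finitely many leading-degree monomials only costs a constant depending on $d$. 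A secondary subtlety is handling the regime where $[f]_\infty^{-1}\varepsilon$ is large, where the logarithm changes sign and one must instead use the trivial bound $\sigma(\mu \circ f^{-1}, \varepsilon) \le 2$ together with $\|\mu\|$ being a probability measure; the "$+1$" inside the logarithmic factor and the overall constant $C(d,m)$ absorb this case.
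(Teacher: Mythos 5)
Your outline shares some ingredients with the paper's argument (fibre\-wise one--dimensional reduction, anti-concentration of the top coefficient polynomial, a recursion that generates the logarithms), but it has genuine gaps. First, the recursion itself: you announce induction on the individual degree $m$, yet your own bookkeeping (number of iterations $d-m$, one logarithm per step) only fits an induction on the \emph{total} degree $d$ with base case $d=m$ (where Proposition~\ref{cor-coeff} applies and no logarithm appears); as stated your scheme does not terminate with the right count. More importantly, you never say how the singular contribution of the small-coefficient region is integrated out. Applying Theorem~\ref{T-1d} on an $x_n$-fibre needs a lower bound for $\partial_{x_n}^{k}f$ on the whole fibre, i.e.\ for the coefficient polynomial $c_k(x')$, and this fails where $c_k(x')$ is small; one is forced to integrate a quantity like $\bigl(|c_k(x')|^2+\varepsilon^2\bigr)^{-1/(2m)}$ over the remaining variables. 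In the paper this is the heart of the proof of Theorem~\ref{ind-deg}: a smoothing device ($f(U)+\varepsilon V$ with an independent uniform $V$, Theorem~\ref{CW-T} applied in the pair $(U_n,V)$, Markov and Nikolskii inequalities), followed by a layer-cake computation combined with Theorem~\ref{T-meas} and the inductive hypothesis for $c_k\in\mathcal{P}_{d-1,m}(\mathbb{R}^{n-1})$ with $[c_k]_\infty\ge[f]_\infty$; it is precisely this integration that adds one factor $|\ln\varepsilon|$ per unit drop of the total degree, whence the exponent $d-m$. Your sketch replaces all of this by the phrase ``a negative power of the sublevel-set measure,'' which is where the actual work lies.

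Second, the dependence on the densities is not justified. The heuristic ``at most $\lceil d/m\rceil$ variables can carry individual degree $m$'' is not an argument, and the mechanism you describe --- one full $\dot{BV}$ loss each time a variable is integrated out --- would accumulate an exponent of order $(d-m)$ plus the base case, overshooting $d/m$ for $m>1$; a direct induction with general product measures therefore does not close with the stated exponent. In the paper the BV dependence never enters the induction: Theorem~\ref{ind-deg} is proved for the uniform measure on the cube (no BV factors), and the general product case is reduced to the cube in a single step via the Bobkov--Chistyakov--G\"otze mixture representation (Theorem~\ref{lem-BChG}), an affine change of variables onto $Q^n$ tracking how $[g]_\infty$ scales, and H\"older's inequality on the mixing measures with exponents $m/j_k$, which yields exactly $\prod_k M^{j_k/m}=M^{d(f)/m}\le M^{d/m}$. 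Some equivalent of this reduction (or another mechanism producing the exponent $d/m$ rather than $d-m$) is missing from your proposal, so the claimed bound with $\bigl(1+\max_j\|\varrho_j\|_{\rm \dot{BV}(\mathbb{R})}\bigr)^{d/m}$ is not established.
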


The proofs of Theorems~\ref{reg-prod-1}~and~\ref{reg-prod-2}
proceed in two steps. 
First, in Section~\ref{sec-cube}, we establish these theorems for the unit cube $Q^n := [-\frac{1}{2}, \frac{1}{2}]^n$, that is, in the case when each $\nu_j$ is the uniform distribution on $[-\frac{1}{2}, \frac{1}{2}]$. To this end, we use Theorem~\ref{T-main-reg} and apply 
\cite[Theorem 1]{GM22} to control the variance $\mathbb{D}_{Q^n} f$. Moreover, to prove Theorem~\ref{reg-prod-2} in this case, we follow the approach of~\cite{Kos-FA} and employ an inductive argument based on the relation between the measure of sublevel sets and the regularity of the distribution (see Theorem~\ref{T-meas}). Second, in Section~\ref{sec-prod}, we complete the proofs of Theorems~\ref{reg-prod-1} and~\ref{reg-prod-2} in a slightly more general form (see Theorems~\ref{Coeff-T-1} and~\ref{Coeff-T-2}) by reducing the general case to that of the unit cube using the technique developed in~\cite{BChG}.

\vskip .1in

Both of the above theorems imply a corresponding estimate for the oscillatory integrals.
Namely, let $\nu_1, \ldots, \nu_n$ 
be probability measures
on $\mathbb{R}$, each with a density~$\varrho_j$ of bounded variation.
Then Theorem~\ref{reg-prod-1} yields
\begin{equation}\label{T-osc-prod-1}
\Bigl|\int_{\mathbb{R}^n}e^{it f(x)}\, \nu_1(dx_1)\ldots\nu_n(dx_n)\Bigr|
\le (1+\max_{1\le j\le n}\|\varrho_j\|_{\rm \dot{BV}(\mathbb{R})}) \frac{C\min\{d, n\}}{([f]_2 |t|)^{1/d}}
\end{equation}
for every $t\in\mathbb{R}\setminus\{0\}$
and every
non-constant $f\in\mathcal{P}_d(\mathbb{R}^n)$.
Similarly, 
Theorem~\ref{reg-prod-1} implies
\begin{align}\label{T-osc-prod-2}
\Bigl|\int_{\mathbb{R}^n}&e^{it f(x)}\, \nu_1(dx_1)\ldots\nu_n(dx_n)\Bigr|
\\
&\le
\bigl(1+\max_{1\le j\le n}\|\varrho_j\|_{\rm \dot{BV}(\mathbb{R})}\bigr)^{d/m} \frac{C(d, m)}{ ([f]_\infty|t|)^{1/m}
}\bigl(|\ln ([f]_\infty |t|)|^{d-m}+1\bigr)\nonumber
\end{align}
for every $t\in\mathbb{R}\setminus\{0\}$
and every
non-constant $f\in\mathcal{P}_{d,m}(\mathbb{R}^n)$.
In particular, these estimates provide a dimension-free counterpart of the inequality~\eqref{Th-CCW-2} and generalize the following recent result by I.~Glazer and D.~Mikulincer.

\begin{theorem}[see {\cite[Theorem 5]{GM22}}]\label{T-GM}
There exists an absolute constant $C>0$ such that for any log-concave probability measure $\nu$ on $\mathbb{R}$ (that is, a measure with density $e^{-V}$, where $V\colon \mathbb{R}\to(-\infty,+\infty]$ is convex) satisfying	
$$
\int_{\mathbb{R}} x\, \nu(dx) = 0, \quad 
\int_{\mathbb{R}} x^2\, \nu(dx) = 1,
$$
and for any polynomial $f\in \mathcal{P}_d(\mathbb{R}^n)$, one has
$$
\Bigl|\int_{\mathbb{R}^n}e^{it f(x)}\, \nu(dx_1)\ldots\nu(dx_n)\Bigr|
\le \frac{Cd}{([f]_\infty |t|)^{1/d}}\quad \forall t\in\mathbb{R}\setminus\{0\}.
$$
\end{theorem}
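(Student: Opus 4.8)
The plan is to deduce Theorem~\ref{T-GM} directly from the oscillatory-integral bound~\eqref{T-osc-prod-1} (equivalently, from Theorem~\ref{reg-prod-1}), applied in the special case $\nu_1=\dots=\nu_n=\nu$. The only point that is not purely formal is to check that the prefactor $\max_{1\le j\le n}\|\varrho_j\|_{\rm \dot{BV}(\mathbb{R})}$ appearing there is bounded by an absolute constant when $\varrho_j=\varrho=e^{-V}$ is the common isotropic log-concave density.

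First I would reduce the $\dot{BV}$-norm to an $L^\infty$-norm. Since $V$ is convex, every superlevel set $\{\varrho\ge a\}$ is an interval, so $\varrho$ is unimodal: it rises from $0$ (or from a jump) up to $\|\varrho\|_{L^\infty(\mathbb{R})}$ and then decreases back to $0$ (or to a jump). Hence its distributional derivative is a finite signed measure of total mass $2\|\varrho\|_{L^\infty(\mathbb{R})}$, so that $\|\varrho\|_{\rm \dot{BV}(\mathbb{R})}=2\|\varrho\|_{L^\infty(\mathbb{R})}$ (in particular $\omega(\varrho,\varepsilon)\le 2\|\varrho\|_{L^\infty(\mathbb{R})}$ for every $\varepsilon>0$). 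It then remains to bound $\|\varrho\|_{L^\infty(\mathbb{R})}$, and this is exactly where the normalization $\int x^2\,\nu(dx)=1$ enters. Writing $M=\|\varrho\|_{L^\infty(\mathbb{R})}$ and letting $x_0$ be the mode of $\varrho$, one has $|\{\varrho\ge M/2\}|\le 2/M$ (because $1=\int\varrho\ge\tfrac M2\,|\{\varrho\ge M/2\}|$), and by log-concavity $\varrho$ decays at least geometrically outside this interval at a rate of order $M$. Consequently $\varrho$ is concentrated, up to exponentially small tails, within distance $O(1/M)$ of $x_0$, so that $\int(x-x_0)^2\varrho(x)\,dx\le CM^{-2}$; since the variance about the mean is no larger, $1=\int x^2\,\nu(dx)\le CM^{-2}$, i.e. $M\le C_0$ for an absolute constant $C_0$. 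Thus $\|\varrho\|_{\rm \dot{BV}(\mathbb{R})}\le 2C_0$.

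Substituting $\max_j\|\varrho_j\|_{\rm \dot{BV}(\mathbb{R})}\le 2C_0$ into~\eqref{T-osc-prod-1} with all $\nu_j$ equal to $\nu$ gives
$$
\Bigl|\int_{\mathbb{R}^n}e^{itf(x)}\,\nu(dx_1)\cdots\nu(dx_n)\Bigr|
\le(1+2C_0)\,\frac{C\min\{d,n\}}{([f]_2\,|t|)^{1/d}}
\qquad\forall t\in\mathbb{R}\setminus\{0\}.
$$
Finally, the Euclidean norm of the vector of top-degree coefficients of $f$ dominates its maximum modulus, so $[f]_2\ge[f]_\infty$ and hence $([f]_2|t|)^{-1/d}\le([f]_\infty|t|)^{-1/d}$; combined with the trivial bound $\min\{d,n\}\le d$, this yields precisely the asserted estimate, with the absolute constant $(1+2C_0)C$ in place of $C$.

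In short, the statement is a corollary of~\eqref{T-osc-prod-1}, so there is no substantial obstacle. The only ingredient that is not purely formal is the uniform $L^\infty$- (equivalently, $\dot{BV}$-) bound for a normalized one-dimensional log-concave density, which is classical; it is exactly this bound — made available by the variance normalization $\int x^2\,\nu(dx)=1$ — that makes the dimensional-type constant in~\eqref{T-osc-prod-1} collapse to an absolute multiple of $d$ and, together with $[f]_2\ge[f]_\infty$, allows $[f]_\infty$ to play the role of $[f]_2$.
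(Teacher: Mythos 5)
Your deduction is correct, but note that the paper does not prove Theorem~\ref{T-GM} at all: it is quoted from \cite{GM22} (Theorem~5 there) for comparison, and Glazer--Mikulincer's own argument is independent of the machinery of this paper. What you wrote is essentially the deduction the paper only hints at in the remark following the theorem, where it points to \eqref{Krug} and \eqref{isotr-est}: in dimension one, \eqref{Krug} gives $\|D_1\varrho\|_{\rm TV}=2\sup_t\varrho(t)$ for a log-concave density, and \eqref{isotr-est} with $n=1$, i.e.\ $L(\nu)=\|\varrho\|_\infty\bigl(\det{\rm Cov}(\nu)\bigr)^{1/2}=\|\varrho\|_\infty\le c$ under the normalization $\int x\,d\nu=0$, $\int x^2\,d\nu=1$, bounds $\|\varrho\|_\infty$ by an absolute constant; this replaces your hands-on concentration argument near the mode, which is also valid. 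Combined with $[f]_2\ge[f]_\infty$ and $\min\{d,n\}\le d$, estimate \eqref{T-osc-prod-1} then gives the stated bound with an absolute constant, and the route is not circular, since the paper's proof of Theorem~\ref{reg-prod-1} (through Proposition~\ref{cor-coeff}) uses Theorem~1 of \cite{GM22}, not Theorem~5. Two small points of hygiene: \eqref{T-osc-prod-1} is stated for non-constant $f$, and for constant $f$ the claimed inequality is degenerate anyway, so this restriction is harmless but worth saying; and with the paper's modulus-of-continuity definition of the seminorm your identity $\|\varrho\|_{\rm \dot{BV}(\mathbb{R})}=2\|\varrho\|_{L^\infty(\mathbb{R})}$ should be stated as the one-sided bound $\|\varrho\|_{\rm \dot{BV}(\mathbb{R})}\le\|D_1\varrho\|_{\rm TV}=2\|\varrho\|_\infty$, which is all you use. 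In short, your route buys the statement as a corollary of the stronger dimension-free estimate (with $[f]_2$ and $\min\{d,n\}$ in place of $[f]_\infty$ and $d$), at the cost of invoking the full development of Sections~\ref{sec-mult-dim}--\ref{sec-prod}, whereas the paper simply cites \cite{GM22}.
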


It is worth noting  that the estimate~\eqref{T-osc-prod-1}
not only replaces the quantity 
$[f]_\infty$ with the larger parameter 
$[f]_2$ compared to the theorem above,
but also applies to a broader class of measures (see~\eqref{Krug} and, e.g.,~\eqref{isotr-est}). Moreover, using the smaller parameter $[f]_\infty$ in the estimate~\eqref{T-osc-prod-2} yields a sharper decay rate that depends on the individual degree of the polynomial.
We also point out that Theorem~\ref{reg-prod-2} and the estimate~\eqref{T-osc-prod-2} generalize the results obtained for Gaussian measures in~\cite{GPU} and~\cite{Kos-FA}.

\subsection{Convergence of random variables}

The dimension-free nature of the estimates in Theorems~\ref{T-main-reg}, \ref{reg-prod-1}, and~\ref{reg-prod-2} is particularly useful for studying convergence in total variation distance of sequences of random variables that converge in distribution.
This problem has been extensively studied in recent years both for sequences of multiple stochastic integrals, that is, for random variables of the form $f(X)$, where $X$ is an infinite-dimensional Gaussian random element (for example, a Wiener process) and $f$ is a measurable polynomial mapping (see~\cite{BKZ}, \cite{Kos-FCAA}, \cite{NNP}, \cite{NP13}), as well as in more general settings (see~\cite{BC14}, \cite{BC17}, \cite{BCP}, \cite{Kos}, \cite{Kos-Adv}).
The connection between the regularity properties of distributions and upper bounds for the total variation distance is given by the following inequality 
(see~\cite[Theorem~3.2]{BKZ} or~\cite[Remark~3.1]{Kos-FCAA}):
\begin{equation}\label{eq-TV-K}
{\rm d_{TV}}(\xi_1, \xi_2)
\le C \bigl(\max\bigl\{\|\varrho_1\|_{\rm \dot{B}^\alpha_{1, \infty}}, \|\varrho_2\|_{\rm \dot{B}^\alpha_{1, \infty}}\bigr\}\bigr)^{\frac{1}{1+\alpha}}
{\rm d_{K}}(\xi_1, \xi_2)^{\frac{\alpha}{1+\alpha}}
\end{equation}
for two random variables $\xi_1$ and $\xi_2$ on $\mathbb{R}$ with densities $\varrho_1$ and $\varrho_2$, respectively.
Here ${\rm d_{TV}}(\xi_1, \xi_2)$ 
denotes the total variation distance between
the distributions of $\xi_1$ and $\xi_2$, that is,
$$
{\rm d_{TV}}(\xi_1, \xi_2)  := \sup\Bigl\{\mathbb{E}\bigl(\varphi(\xi_1) - \varphi(\xi_2)\bigr)\colon \ \varphi\in C_b^\infty(\mathbb{R}),
\ \|\varphi\|_\infty \le 1 \Bigr\}
$$
and 
${\rm d_{K}}(\xi_1, \xi_2)$ denotes the Kantorovich distance,
$$
{\rm d_{K}}(\xi_1, \xi_2):=
\sup\Bigl\{\mathbb{E}\bigl(\varphi(\xi_1) - \varphi(\xi_2)\bigr)\colon
\varphi\in C_b^\infty(\mathbb{R}),\ \|\varphi'\|_\infty\le1\Bigr\},
$$
defined on the space of all 
random variables with finite first moment, on which the Kantorovich distance metrizes convergence in distribution.

The estimate~\eqref{eq-TV-K}, together with Theorem~\ref{T-main-reg}, clarifies the dependence on the degree of the polynomials in the main result from~\cite{Kos}, yielding the bound
$$
{\rm d_{ TV}}(f(X), g(X))
\le CdA^{-\frac{1}{d+1}}{\rm d_{K}}(f(X), g(X))^{\frac{1}{d+1}}
$$
for a log-concave random vector $X$
(finite- or infinite-dimensional)
and for polynomials $f$ and $g$
of degree at most $d$ satisfying
$$
\mathbb{D}f(X), \mathbb{D}g(X)\ge A^2>0,
$$
where $\mathbb{D}$ denotes the variance of a random variable. In particular, the estimate remains valid for multiple stochastic integrals, where $X$ is a Gaussian random element. 

Similarly, combining the estimate~\eqref{eq-TV-K} with Theorem~\ref{reg-prod-1}, we obtain a completely new result in the case of a random vector $X = (X_1, \ldots, X_n)$ with independent coordinates $X_j$, each having a density $\varrho_j$ of bounded variation satisfying $\|\varrho_j\|_{\rm \dot{BV}(\mathbb{R})} \le 1$. Namely, the following bound for the total variation distance holds:
$$
{\rm d_{ TV}}(f(X), g(X))
\le C\min\{d, n\}A^{-\frac{1}{d+1}}{\rm d_{K}}(f(X), g(X))^{\frac{1}{d+1}}
$$
for all $f, g \in \mathcal{P}_d(\mathbb{R}^n)$ satisfying
$$
[f]_2, [g]_2\ge A.
$$ 
We expect that this estimate, together with Theorem~\ref{reg-prod-1}, will be useful for obtaining new upper bounds on the rate of convergence in the total variation distance in the so-called invariance principle for polynomials (see~\cite{BC19}, \cite{KosZh}, \cite{MOO}, \cite{NP15}).

\section{Preliminaries and notation}\label{sec-prelim}

In this section, we introduce the definitions and notation used throughout the paper and review some known results that will be needed later.

\subsection{Notation for constants}

Throughout the paper, $C, c > 0$ denote absolute constants that may vary from line to line.
When needed, we distinguish different occurrences by subscripts, writing $C_1, C_2$, etc. These subscripts serve merely as labels and do not indicate parameter dependence.
Dependence on parameters will be denoted by $C(n)$ or $C(m,d)$, where $n$ denotes the dimension of the space, $d$ the total degree of a polynomial, and $m$ its individual degree.

\subsection{Measures and functions of bounded variation}

Let $C_0^\infty(\mathbb{R}^n)$ denote the space of all smooth functions with compact support, and $C_b^\infty(\mathbb{R}^n)$ denote the space of all bounded smooth functions with bounded derivatives of every order. The standard Euclidean inner product of two vectors $x, y \in \mathbb{R}^n$ is denoted by $\langle x, y \rangle$, and the corresponding Euclidean norm of $x \in \mathbb{R}^n$ is denoted by $|x|$. We denote the standard Lebesgue measure on $\mathbb{R}^n$ by $\lambda_n$, and when $n=1$, we write $\lambda$ instead of~$\lambda_1$.

Let $\mu$ be a bounded (possibly signed) Borel measure on $\mathbb{R}^n$. Its total variation norm is defined by the equality
$$
\|\mu\|_{\rm TV}  := \sup\Bigl\{\int_{\mathbb{R}^n} \varphi \, d\mu, \ \varphi\in C_b^\infty(\mathbb{R}^n),
\ \|\varphi\|_\infty \le1 \Bigr\},
$$
where
$$
\|\varphi\|_\infty:= \sup_{x\in \mathbb{R}^n}|\varphi(x)|.
$$
The image of a measure $\mu$ under a measurable mapping $f\colon \mathbb{R}^n \to \mathbb{R}$ is the measure $\mu \circ f^{-1}$, defined by
$$
\mu\circ f^{-1} (A) = \mu\bigl(f^{-1}(A) \bigr) \quad \text{for every Borel set } A\subset\mathbb{R}.
$$

A function $\varrho \in L^1(\mathbb{R}^n)$ is said to be of bounded variation (see \cite[Definition~3.1]{AFP}) if, for every $\theta \in \mathbb{R}^n$, there exists a bounded Borel measure $D_\theta \varrho$ such that
$$
\int_{\mathbb{R}^n}\varrho(x)\partial_\theta\varphi(x)\, dx=
-\int_{\mathbb{R}^n}\varphi(x)\, D_\theta \varrho(dx)\quad 
\forall \varphi\in C_0^\infty(\mathbb{R}^n).
$$
Let $BV(\mathbb{R}^n)$ denote the collection of all such functions.
We note that any Sobolev function $\varrho \in W^{1,1}(\mathbb{R}^n)$ belongs to $BV(\mathbb{R}^n)$.
It is also clear that a function $\varrho \in L^1(\mathbb{R})$ belongs to $BV(\mathbb{R})$ if and only if 
$\sup\limits_{\varepsilon>0} \varepsilon^{-1}\sigma(\varrho, \varepsilon) < \infty,$
and equivalently, if 
$\sup\limits_{\varepsilon>0} \varepsilon^{-1}\omega(\varrho, \varepsilon) < \infty.$
Moreover, by the estimate~\eqref{T-equiv},
$$
2^{-1}\|\varrho\|_{\rm \dot{BV}(\mathbb{R})}
\le 
\sup\limits_{\varepsilon>0} \varepsilon^{-1}\sigma(\varrho, \varepsilon)
=\|D_1\varrho\|_{\rm TV}\le 6 
\|\varrho\|_{\rm \dot{BV}(\mathbb{R})}.
$$

We will need the following result, which relates the measure of a given set to the regularity properties of the measure.

\begin{theorem}[see {\cite[Lemma 2.4]{Kos}} or {\cite[Corollary 2.2]{Kos-MS}}]\label{T-meas}
For every Borel set $A$ on $\mathbb{R}$ and every probability Borel measure $\nu$ on $\mathbb{R}$, one has
$$
\nu(A)\le \sigma(\nu, \lambda(A)).
$$
\end{theorem}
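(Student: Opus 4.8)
The plan is to produce, for each $\delta>0$, an explicit admissible competitor in the definition of $\sigma(\nu,\lambda(A))$ whose value is at least $\nu(A)-O(\delta)$. The competitor will be an antiderivative of a smooth inner approximation of the indicator $\mathbf 1_A$: if $g\colon\mathbb R\to[0,1]$ is smooth, equals $1$ on a large compact part of $A$, and is supported on a set of Lebesgue measure just above $\lambda(A)$, then $\varphi(t):=\int_{-\infty}^t g(s)\,ds$ is automatically $1$-Lipschitz, nonnegative, bounded by $\int_{\mathbb R}g\le\lambda(\supp g)$, and satisfies $\int_{\mathbb R}\varphi'\,d\nu=\int_{\mathbb R}g\,d\nu\gtrsim\nu(A)$. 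Thus $\varphi$ (after a tiny rescaling) is a legal test function and already sees almost all of $\nu(A)$.

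Concretely, I would first assume $\varepsilon:=\lambda(A)\in(0,\infty)$: if $\lambda(A)=\infty$ there is nothing to prove, and if $\lambda(A)=0$ the inequality is immediate whenever $\nu$ has a density, as is the case in all the intended applications. Fixing $\delta\in(0,\varepsilon)$, I would then use inner regularity of the finite Borel measure $\nu$ to pick a compact $K\subset A$ with $\nu(K)>\nu(A)-\delta$ (so $\lambda(K)\le\varepsilon$), and outer regularity of Lebesgue measure to pick a bounded open set $U\supset K$ with $\lambda(U)<\varepsilon+\delta$. A smooth Urysohn function furnishes $g\in C_0^\infty(\mathbb R)$ with $0\le g\le1$, $g\equiv1$ on $K$, and $\supp g\subset U$. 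Setting $\varphi(t):=\int_{-\infty}^t g(s)\,ds$, one checks that $\varphi\in C_b^\infty(\mathbb R)$ (it is eventually constant and all its derivatives are bounded), that $\|\varphi'\|_\infty=\|g\|_\infty\le1$, and that $\|\varphi\|_\infty=\int_{\mathbb R}g\le\lambda(U)<\varepsilon+\delta$. To meet the normalization $\|\varphi\|_\infty\le\varepsilon$ exactly, I would finally rescale, putting $\tilde\varphi:=\tfrac{\varepsilon}{\varepsilon+\delta}\varphi$, which keeps $\|\tilde\varphi'\|_\infty\le1$ and yields
$$
\sigma(\nu,\varepsilon)\ \ge\ \int_{\mathbb R}\tilde\varphi'\,d\nu\ =\ \frac{\varepsilon}{\varepsilon+\delta}\int_{\mathbb R}g\,d\nu\ \ge\ \frac{\varepsilon}{\varepsilon+\delta}\,\nu(K)\ >\ \frac{\varepsilon}{\varepsilon+\delta}\bigl(\nu(A)-\delta\bigr).
$$
Letting $\delta\to0^+$ gives $\sigma(\nu,\lambda(A))\ge\nu(A)$.

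There is no serious analytic difficulty here; the work is entirely in the approximation bookkeeping. The two points to be careful about are: (i) after integrating a compactly supported $g$ one lands in $C_b^\infty(\mathbb R)$ rather than $C_0^\infty(\mathbb R)$ — which is exactly what the definition of $\sigma$ permits — because $\varphi$ is bounded and eventually constant; and (ii) the sup-norm constraint on $\varphi$ can be met only up to the loss $\lambda(U)-\lambda(A)$, and this loss is absorbed for free by the final rescaling, since scaling down a $1$-Lipschitz function keeps it $1$-Lipschitz. In essence the whole statement reduces to the trivial observation that the antiderivative of $\mathbf 1_A$ is a $1$-Lipschitz function bounded by $\lambda(A)$.
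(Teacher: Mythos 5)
The paper itself gives no proof of Theorem~\ref{T-meas}: it is imported from \cite{Kos} (Lemma~2.4) and \cite{Kos-MS} (Corollary~2.2), so there is no internal argument to compare against. Your proof is correct and is essentially the standard argument behind the cited result: the admissible competitor is (a rescaling of) the antiderivative of a smooth Urysohn approximation of $I_A$, and the bookkeeping is in order --- inner regularity of the finite Borel measure $\nu$ produces the compact $K\subset A$, outer regularity of Lebesgue measure produces the open $U\supset K$ with $\lambda(U)<\lambda(A)+\delta$, the pointwise bound $g\ge I_K$ gives $\int g\,d\nu\ge \nu(K)$, and multiplying by $\varepsilon/(\varepsilon+\delta)$ preserves both constraints, so letting $\delta\to0^+$ yields $\nu(A)\le\sigma(\nu,\lambda(A))$. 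Two minor remarks on your edge cases. First, the aside that $\lambda(A)=0$ requires $\nu$ to have a density is both unnecessary and slightly off the mark: $\sigma(\nu,\varepsilon)$ is only defined for $\varepsilon>0$ in this paper, and for any $\varepsilon>0$ your construction applies verbatim (choose $U\supset K$ with $\lambda(U)<\varepsilon$, which is possible since $\lambda(K)=0$), giving $\sigma(\nu,\varepsilon)\ge\nu(A)$ for every $\varepsilon>0$ and arbitrary $\nu$; so nothing about densities is needed, only a sentence fixing the interpretation of $\sigma(\nu,0)$. Second, for $\lambda(A)=\infty$ the phrase ``nothing to prove'' should be replaced by the one-line observation that the sup-norm constraint is then vacuous and the same compact approximation gives $\sigma(\nu,\lambda(A))\ge\nu(K)>\nu(A)-\delta$. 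Neither point affects the applications in the paper, where $\nu$ is absolutely continuous and $\lambda(A)\in(0,\infty)$.
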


\subsection{Log-concave and $s$-concave measures}

We will use some properties of the so-called $s$-concave measures, introduced by C.~Borell in \cite{Bor74}, \cite{Bor75}.

\begin{definition}[see {\cite[Section 2.1.1]{BGVV}}]\label{def-s-conc}
Let $s\in [-\infty, \infty]$.	
A probability Borel measure $\mu$ on $\mathbb{R}^n$ is called $s$-concave if
$$
\mu\bigl(\alpha A + (1-\alpha) B\bigr)\ge \bigl(\alpha \mu^s(A) + (1-\alpha)\mu^s(B)\bigr)^{1/s}
$$	
for all compact subsets $A, B\subset \mathbb{R}^n$ with $\mu(A)\mu(B)>0$
and all $\alpha\in[0, 1]$.
\end{definition}

\begin{definition}[see {\cite[Section 2.1.1]{BGVV}}]
Let $\gamma\in[-\infty, \infty]$.
A nonnegative function $\varrho\colon \mathbb{R}^n\to[0, \infty)$
is called $\gamma$-concave
if
$$
\varrho\bigl(\alpha x + (1-\alpha)y\bigr)\ge \bigl(\alpha \varrho^\gamma(x) + (1-\alpha)\varrho^\gamma(y)\bigr)^{1/\gamma}
$$
for all points $x$ and $y$ such that $\varrho(x)\varrho(y)>0$ and for all $\alpha\in[0, 1]$.
\end{definition}

\noindent
In both definitions above, the cases $s, \gamma \in \{0, \pm\infty\}$ 
are understood in the limiting sense. In particular, for $s=0$ we recover the definition 
of a \emph{log-concave measure}, for which
$$
\mu\bigl(\alpha A + (1-\alpha) B\bigr)\ge \mu(A)^\alpha \mu(B)^{1-\alpha},
$$
and for $\gamma=0$ we recover the definition of a log-concave function $\varrho$, i.e. $\log \varrho$ is concave.  
It follows directly from the definition that an $s$-concave measure
is also $s'$-concave for every $s'<s$. In particular, every $s$-concave measure with $s \ge 0$ is log-concave. 
For this reason, our proofs will rely heavily on the known properties of log-concave measures.

For a probability Borel measure $\mu$ on $\mathbb{R}^n$, let $S(\mu)$ denote the affine subspace spanned by its support ${\rm supp}(\mu)$, and let $m_{S(\mu)}$ be the Lebesgue measure on $S(\mu)$.
There is a fundamental relation between $s$-concave measures and $\gamma$-concave functions.

\begin{theorem}[see \cite{Bor75} and {\cite[Theorem 9.1.2]{A-AGM-2}}]\label{T-Bor}
Let $\mu$ be a probability Borel measure on
$\mathbb{R}^n$, and let $k=\dim S(\mu)$. Then,
for $s\in (-\infty, 1/k]$, the measure $\mu$ is $s$-concave if and only if $\mu=\varrho \cdot m_{S(\mu)}$, where $\varrho$ is a $\gamma$-concave function with $\gamma=\frac{s}{1-sk}\in[-1/k,+\infty]$. 
\end{theorem}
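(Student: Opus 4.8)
The plan is to prove the two implications separately, after a common normalization. Since $\mu$ is a probability measure, for any $a,b\in\supp(\mu)$, any $\alpha\in[0,1]$, and any neighbourhoods $A\ni a$, $B\ni b$ one has $\mu(A)\mu(B)>0$, so the defining inequality forces $\mu(\alpha A+(1-\alpha)B)>0$; shrinking $A$ and $B$ then shows $\alpha a+(1-\alpha)b\in\supp(\mu)$. Hence $\supp(\mu)$ is convex and $S(\mu)$ is its affine hull. Translating and restricting to $S(\mu)$, I may assume $k=n$ and that $K:=\supp(\mu)$ is a closed convex set with $\operatorname{int}K\neq\varnothing$; the relation $s=\gamma/(1+n\gamma)$, equivalently $\gamma=s/(1-sn)$, is consistent with the stated exponent ranges (it is increasing in $s$ and sends $(-\infty,1/n]$ onto $(-1/n,+\infty]$, with $\gamma=-1/n$ the limiting case $s=-\infty$).

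For the implication ``$\varrho$ is $\gamma$-concave $\Rightarrow$ $\mu$ is $s$-concave'' I would invoke the Borell--Brascamp--Lieb inequality (the dimensional refinement of the Pr\'ekopa--Leindler inequality). Given compact $A,B$ with $\mu(A)\mu(B)>0$ and $\alpha\in[0,1]$, apply it to $u=\varrho\mathbf{1}_A$, $v=\varrho\mathbf{1}_B$, $w=\varrho\mathbf{1}_{\alpha A+(1-\alpha)B}$: whenever $u(x)v(y)>0$ we have $x\in A$, $y\in B$, hence $\alpha x+(1-\alpha)y\in\alpha A+(1-\alpha)B$ and, by the $\gamma$-concavity of $\varrho$, $w(\alpha x+(1-\alpha)y)\ge\bigl(\alpha\varrho(x)^{\gamma}+(1-\alpha)\varrho(y)^{\gamma}\bigr)^{1/\gamma}$. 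The Borell--Brascamp--Lieb inequality then yields $\int w\ge\bigl(\alpha(\int u)^{s}+(1-\alpha)(\int v)^{s}\bigr)^{1/s}$ with $s=\gamma/(1+n\gamma)$, i.e.\ $\mu(\alpha A+(1-\alpha)B)\ge\bigl(\alpha\mu(A)^{s}+(1-\alpha)\mu(B)^{s}\bigr)^{1/s}$; solving for $\gamma$ gives $\gamma=s/(1-sn)$, and the hypothesis $\gamma\ge-1/n$ is exactly $s\le1/n$.

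The converse is the substantial direction. First I would establish $\mu\ll m_{S(\mu)}=\lambda_n$ (so in particular $\mu(\partial K)=0$): this is a classical feature of $s$-concave measures, provable by a Vitali-type covering argument in the spirit of the differentiation theory of measures, the point being that the upper symmetric derivative of $\mu$ cannot be infinite on a set of positive $\mu$-mass, which already follows from the comparison of masses of balls of different radii used below; in particular $\mu$ has no atoms in $\operatorname{int}K$. Granting $\mu=g\cdot\lambda_n$ with $g\ge0$, I would then produce a $\gamma$-concave representative of $g$. By the Lebesgue differentiation theorem, $g(x)=\lim_{r\to0}\lambda_n(B(x,r))^{-1}\!\int_{B(x,r)}g$ at every Lebesgue point $x$ of $g$. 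Fix Lebesgue points $x,y$ and $\alpha\in(0,1)$, put $z=\alpha x+(1-\alpha)y$ (which is again a Lebesgue point after discarding a $\lambda_{2n}$-null set of pairs $(x,y)$), and apply the $s$-concavity of $\mu$ to $B(x,r_1)$ and $B(y,r_2)$ via the identity $\alpha B(x,r_1)+(1-\alpha)B(y,r_2)=B(z,\alpha r_1+(1-\alpha)r_2)$. Dividing by $\lambda_n(B(z,\alpha r_1+(1-\alpha)r_2))$ and letting $r_1\to0$ with $r_2=\lambda r_1$ for a fixed ratio $\lambda>0$ gives
$$
g(z)\ \ge\ \frac{\bigl(\alpha\,g(x)^{s}+(1-\alpha)\lambda^{ns}g(y)^{s}\bigr)^{1/s}}{(\alpha+(1-\alpha)\lambda)^{n}}\qquad\text{for every }\lambda>0.
$$
Maximizing the right-hand side over $\lambda$ is exactly the one-variable extremal computation underlying the Borell--Brascamp--Lieb inequality, and it evaluates to $\bigl(\alpha g(x)^{\gamma}+(1-\alpha)g(y)^{\gamma}\bigr)^{1/\gamma}$ with $\gamma=s/(1-sn)$. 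Thus $g$ agrees a.e.\ with a function satisfying the $\gamma$-concavity inequality, and a routine upgrade from this ``almost everywhere'' statement to a genuine $\gamma$-concave representative $\varrho$ (extended by $0$ off $\overline{\{g>0\}}$, using that $\gamma$-concave functions are continuous on the interior of their support and that $\lambda_n(\partial K)=0$) gives $\mu=\varrho\cdot m_{S(\mu)}$ with $\varrho$ $\gamma$-concave.

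The step I expect to be the main obstacle is the dimensional gain: obtaining $\gamma$-concavity of the density with the \emph{sharp} exponent $\gamma=s/(1-sn)$, rather than the weaker $s$-concavity that equal-radius balls would give. This forces comparing balls of different radii and carrying out the optimization over $\lambda$ --- the same extremal argument that lies behind Borell--Brascamp--Lieb --- and, more technically, justifying the interchange of the limit in $r$ with the power-mean combination of the three ball masses. The absolute-continuity step is also somewhat delicate, but it follows standard lines once this comparison of ball masses is in hand.
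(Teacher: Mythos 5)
The paper does not prove this statement at all: it is quoted as Borell's classical characterization, with references to Borell (1975) and \cite[Theorem 9.1.2]{A-AGM-2}, so there is no argument of the paper to compare yours against and the proposal must stand on its own. On its own terms, your ``if'' direction is fine: reducing to $k=n$, the Borell--Brascamp--Lieb inequality applied to $\varrho\mathbf{1}_A$, $\varrho\mathbf{1}_B$, $\varrho\mathbf{1}_{\alpha A+(1-\alpha)B}$ gives exactly the $s$-concavity with $s=\gamma/(1+n\gamma)$, and your bookkeeping of the exponent ranges is correct. The differentiation scheme for the converse, including the extremal identity $\sup_{\lambda>0}\bigl(\alpha a^s+(1-\alpha)\lambda^{ns}b^s\bigr)^{1/s}\bigl(\alpha+(1-\alpha)\lambda\bigr)^{-n}=\bigl(\alpha a^{\gamma}+(1-\alpha)b^{\gamma}\bigr)^{1/\gamma}$ with $\gamma=s/(1-sn)$, is also the standard route and the computation is right.

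The genuine gap is the absolute-continuity step $\mu\ll m_{S(\mu)}$, which you dispose of in one sentence. Your hint --- that an infinite upper symmetric derivative on a set of positive $\mu$-mass ``already follows from the comparison of masses of balls'' --- can indeed be turned into a proof when $s\ge 0$: there $\bigl(\alpha u^s+(1-\alpha)v^s\bigr)^{1/s}\ge \alpha^{1/s}u$ (resp.\ $u^{\alpha}v^{1-\alpha}$ for $s=0$), so an infinite density ratio at one point of the singular part propagates along segments to a set of positive Lebesgue measure, contradicting the a.e.\ finiteness of the symmetric derivative of a finite measure. But the theorem you are proving covers all $s\in(-\infty,1/k]$, and for $s<0$ this argument collapses: since the $s$-mean is bounded by $(1-\alpha)^{1/s}v$ uniformly in $u$, the hugeness of $\mu(B(x,r_1))/r_1^n$ contributes only a bounded factor for fixed $\alpha$, and letting $\alpha\to1$ destroys the Lebesgue measure of the image set, so no contradiction is reached. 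Establishing that an $s$-concave (indeed, any convex) measure whose support affinely spans $\mathbb{R}^k$ is absolutely continuous on $\mathbb{R}^k$ is precisely the substantive half of Borell's theorem and needs a separate argument (or a citation); it cannot be waved through as a Vitali-covering remark. Secondarily, the upgrade from the inequality at a.e.\ triple of Lebesgue points to an everywhere $\gamma$-concave representative is standard but does require an actual argument; with the absolute continuity supplied, the rest of your sketch goes through.
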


\subsection{Localization lemma with several constraints}
One of the key tools associated with the class of $s$-concave measures is the {\it localization technique}, originally developed in the works of Gromov and Milman~\cite{GM87}, Lov\'asz and Simonovits~\cite{LS}, and Kannan, Lov\'asz, and Simonovits~\cite{KLS}. We will use this technique in the convenient form introduced by Fradelizi and Gu\'edon~\cite{FrGue}.

\begin{theorem}[see \cite{FrGue}]\label{loc-lem}
Let $K$ be a compact convex set in $\mathbb{R}^n$, $p\in\{1,\ldots,n\}$, and $-\infty \le s \le \tfrac{1}{p+1}$. 
Suppose $u_j\colon K \to \mathbb{R}$, $1\le j\le p$, are continuous functions on $K$. 
Define $P_{u_1,\ldots,u_p}^s(K)$ to be the set of all $s$-concave measures $\mu$ supported in $K$ such that
$$
\int_K u_j \, d\mu \ge 0, \quad j=1,\ldots,p.
$$
Let $F\colon P(K)\to \mathbb{R}$ be a convex continuous functional, where $P(K)$ denotes the space of all Borel probability measures on $K$, equipped with the weak topology (i.e., the restriction of the weak-$*$ topology on the dual of $C(K)$). 
Then 
$$
\sup\limits_{\mu\in P_{u_1,\ldots,u_p}^s(K)} F(\mu)
$$
is attained at an $s$-concave measure $\mu$ such that the affine span $S(\mu)$ of $\operatorname{supp}(\mu)$ satisfies $\dim S(\mu)\le p$. 
In particular,
$$
\sup_{\mu\in P_{u_1,\ldots,u_p}^s(K)} F(\mu)
\;\le\; \sup_{\substack{\mu\in P_{u_1,\ldots,u_p}^s(K)\\ \dim S(\mu)\le p}} F(\mu).
$$
\end{theorem}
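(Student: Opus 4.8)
The plan is to combine a soft weak-compactness argument, which produces an extremal measure, with an iterated bisection of ham-sandwich type, which forces the support of such an extremal measure down to dimension at most $p$. First I would verify that the supremum is attained: since $K$ is compact, $P(K)$ is weakly compact and metrizable; the class of $s$-concave probability measures supported in $K$ is weakly closed, since the defining inequality is stable under weak limits; and each constraint $\int_K u_j\,d\mu\ge 0$ is a weakly closed condition because $u_j\in C(K)$. Hence $P^s_{u_1,\dots,u_p}(K)$ is weakly compact, so by continuity of $F$ the supremum is attained. Among all maximizers I would fix one, $\mu_0$, for which $k:=\dim S(\mu_0)$ is as small as possible, and it then suffices to show $k\le p$.

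Suppose, for contradiction, that $k\ge p+1$, and work inside the affine hull $S(\mu_0)\cong\mathbb{R}^k$. Applying the ham-sandwich theorem to the $p+1\le k$ signed mass distributions $\mu_0,\,u_1\mu_0,\dots,u_p\mu_0$ yields an affine hyperplane $H$, with closed half-spaces $H^+,H^-$, such that $\mu_0(H^+)=\mu_0(H^-)=\tfrac12$ and $\int_{H^+}u_j\,d\mu_0=\int_{H^-}u_j\,d\mu_0=\tfrac12\int u_j\,d\mu_0\ge 0$ for every $j$. Since the renormalized restriction of an $s$-concave measure to a convex set is again $s$-concave, the measures $\mu_0^{\pm}:=2\,\mu_0|_{H^{\pm}}$ both lie in $P^s_{u_1,\dots,u_p}(K)$; and from $\mu_0=\tfrac12\mu_0^{+}+\tfrac12\mu_0^{-}$ and the convexity of $F$ we get $\max\{F(\mu_0^{+}),F(\mu_0^{-})\}\ge F(\mu_0)$, so at least one of the two halves is again a maximizer.

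A single bisection leaves $\dim S$ unchanged, so the crux — and the step I expect to be the main obstacle — is to iterate the previous paragraph while \emph{steering} the cutting hyperplanes. After $N$ stages one has a feasible maximizer of the form $\mu_0|_{P_N}$ (renormalized), supported on a convex polytope $P_N\subset K$ with $\mu_0(P_N)=2^{-N}$; the point is that the $p$ balancing conditions consume only $p$ of the $k\ge p+1$ parameters available for the hyperplane, leaving enough freedom to shrink $P_N$ in directions transverse to a fixed $p$-dimensional affine subspace, so that $P_N$ collapses onto a set of affine dimension $\le p$. A weak limit point $\bar\mu$ of the renormalized $\mu_0|_{P_N}$ exists by compactness, is feasible and $s$-concave by the closedness statements above, satisfies $F(\bar\mu)\ge F(\mu_0)$ by continuity, and is supported on a set of dimension $\le p$ — contradicting the minimality of $k$. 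Making the steering effective, i.e.\ choosing at each stage a bisecting hyperplane that both preserves feasibility and makes definite geometric progress toward the $p$-dimensional ``needle'', is the delicate part; here the hypothesis $s\le\frac{1}{p+1}$ is used, through Borell's correspondence (Theorem~\ref{T-Bor}), to guarantee that the low-dimensional conditional and limiting measures remain $s$-concave. The inequality displayed in the statement then follows at once, the right-hand supremum being taken over a subset.
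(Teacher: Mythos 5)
First, note that the paper does not prove Theorem~\ref{loc-lem} at all: it is quoted from \cite{FrGue}, so your attempt can only be measured against the cited proof. Your preliminary steps are essentially sound: weak compactness of $P^s_{u_1,\ldots,u_p}(K)$ (granting the standard fact, which you assert without proof, that $s$-concavity survives weak limits), attainment of the supremum, and the single ham--sandwich splitting. Indeed, bisecting the $p+1$ signed measures $\mu_0, u_1\mu_0,\ldots,u_p\mu_0$ inside $S(\mu_0)\cong\mathbb{R}^k$ with $k\ge p+1$ is legitimate ($\mu_0$ has a density on $S(\mu_0)$, so hyperplanes are null), the renormalized halves are $s$-concave and feasible, and convexity of $F$ shows one half is again a maximizer. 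The genuine gap is exactly the step you yourself flag as delicate: the iterated bisection with ``steering''. As described it cannot be implemented in the critical case $k=p+1$: a hyperplane in $\mathbb{R}^k$ carries $k$ parameters, and bisecting $p+1=k$ measures already consumes all of them, so there is no residual freedom to also force the cut to shrink $P_N$ transversally to a prescribed $p$-dimensional subspace. Moreover, bisecting \emph{measure} says nothing about shrinking \emph{diameter}: nested measure-halvings need not collapse the support at all, and the weak limit of the renormalized restrictions can perfectly well have full-dimensional support. Making an iterated-bisection (Lov\'asz--Simonovits style) argument converge to a low-dimensional ``needle'' requires a substantially different and more involved mechanism, none of which is supplied; so the proof is incomplete at its core.

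The missing idea — and the route of \cite{FrGue} — is that your splitting lemma should be used \emph{once}, to rule out extremality, rather than infinitely often to collapse the support. Since $P^s_{u_1,\ldots,u_p}(K)$ is weakly compact, its closed convex hull is a compact convex subset of $P(K)$; by Bauer's maximum principle the convex continuous functional $F$ attains its maximum over the hull at an extreme point, and by Milman's partial converse to Krein--Milman every extreme point of the closed convex hull lies in the compact set $P^s_{u_1,\ldots,u_p}(K)$ itself, so this extreme point is also a maximizer over the constraint set. Your ham--sandwich cut then finishes the argument: if $\dim S(\mu)\ge p+1$, it exhibits $\mu=\tfrac12\mu^++\tfrac12\mu^-$ with $\mu^\pm$ two \emph{distinct} members of $P^s_{u_1,\ldots,u_p}(K)$, so $\mu$ is not an extreme point of the hull. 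Hence the maximizing extreme point satisfies $\dim S(\mu)\le p$, which is the assertion. In short, your first two paragraphs contain the right splitting construction, but the limiting ``needle'' argument you lean on is both unproven and, in the form proposed (steering toward a fixed $p$-dimensional subspace with leftover parameters), unworkable at $k=p+1$; replacing it with the Bauer--Milman extreme-point argument closes the gap.
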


\subsection{Isotropic constant and isotropic measures}
Another important concept that we will need is the notion of {\it the isotropic constant}.

\begin{definition}[see {\cite[Definition 2.3.11]{BGVV}}]
Let $\mu$ be an absolutely continuous probability measure on $\mathbb{R}^n$ with finite second moment, 
and let $\varrho$ denote its density. 
The \emph{isotropic constant} $L(\mu)$ of $\mu$ is defined by
$$
L(\mu):= \bigl(\sup_{x\in \mathbb{R}^n}\varrho(x)\bigr)^{\frac{1}{n}}
\bigl(\det {\rm Cov}(\mu)\bigr)^\frac{1}{2n},
$$
where ${\rm Cov}(\mu)$ is the covariance matrix of $\mu$, with entries
$$
\bigl({\rm Cov}(\mu)\bigr)_{j,k}
:= \int_{\mathbb{R}^n} x_j x_k \, \mu(dx) 
- \Bigl(\int_{\mathbb{R}^n} x_j \, \mu(dx)\Bigr)
\Bigl(\int_{\mathbb{R}^n} x_k \, \mu(dx)\Bigr).
$$
A probability measure $\mu$ on $\mathbb{R}^n$ is called \emph{isotropic} 
if ${\rm Cov}(\mu) = \mathrm{Id}$ and 
$$
\int_{\mathbb{R}^n} x_j \, \mu(dx) = 0, 
\quad \forall j \in \{1,\ldots,n\}.
$$
\end{definition}

There is a simple dimensional estimate of the isotropic constant in the log-concave case (i.e., when $s = 0$), which suffices for the purposes of this paper.
Namely (see \cite[Propositions~3.3.2 and~2.5.12]{BGVV}),  
\begin{equation}\label{isotr-est}
	L(\mu)\le c\sqrt{n}
\end{equation}
for every absolutely continuous log-concave measure $\mu$ on $\mathbb{R}^n$.

We remark that Bourgain’s hyperplane conjecture states that there exists a universal constant $c>0$, independent of the dimension~$n$, such that for every $n \in \mathbb{N}$, the isotropic constant of any absolutely continuous log-concave measure $\mu$ on~$\mathbb{R}^n$ satisfies
$$
L(\mu)\le c.
$$
This long-standing problem was resolved only recently by B.~Klartag and J.~Lehec~\cite{KL24}. 

In addition to the upper bound for the isotropic constant, we will also use the following upper bound for the density of an isotropic log-concave measure.

\begin{theorem}[see {\cite[Corollary 2.4]{Klartag07}}]\label{t-Klar}
There exist universal constants $C, c>0$ such that for every $n\in\mathbb{N}$ and every
isotropic log-concave measure $\mu$ on $\mathbb{R}^n$ with density $\varrho$, one has
$$
\varrho(x)\le \varrho(0)\, e^{Cn - c|x|}, \quad \forall x\in\mathbb{R}^n.
$$
\end{theorem}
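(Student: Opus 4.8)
The plan is to deduce this from an \emph{exponential tilting} argument resting on three classical facts about log-concave measures: \textbf{(i)} if $f$ is a log-concave density on $\mathbb{R}^n$ with barycenter $b$, then $\sup_x f(x)\le e^{n}f(b)$ (Fradelizi's centroid lemma); \textbf{(ii)} linear functionals of a log-concave vector are log-concave, and a real log-concave random variable $Z$ with $\mathbb{E}Z=0$, $\mathbb{E}Z^2=1$ satisfies $\mathbb{E}\,e^{|Z|/C_0}\le 2$ for some universal $C_0>0$ (the $\psi_1$-estimate, a standard consequence of Borell's lemma); \textbf{(iii)} the log-Laplace transform $w\mapsto\log\int e^{\langle y,w\rangle}\varrho(y)\,dy$ is convex. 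From isotropy only $\mathbb{E}Y=0$ and $\mathrm{Cov}(\mu)=\mathrm{Id}$ will be used; in particular the argument will not need the estimate~\eqref{isotr-est}.

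We may assume $x\ne 0$ (for $x=0$ the inequality is trivial). Fix $C_0$ as in (ii), set $\theta:=x/|x|$ and $u:=\tfrac{1}{2C_0}\theta$, so that $|u|=\tfrac{1}{2C_0}$ and $\langle x,u\rangle=\tfrac{|x|}{2C_0}$, and write $Z(w):=\int_{\mathbb{R}^n}e^{\langle y,w\rangle}\varrho(y)\,dy$. Since $\langle Y,\theta\rangle$ is log-concave with mean $0$ and variance $1$ by isotropy, fact (ii) together with $e^{a|t|}\le e^{|t|/C_0}$ for $0\le a\le\tfrac{1}{C_0}$ gives $Z(u)\le 2$ and $Z(2u)\le 2$, while Jensen's inequality and $\mathbb{E}Y=0$ give $Z(u)\ge e^{\langle\mathbb{E}Y,u\rangle}=1$. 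Let $\mu_u$ be the tilted probability measure with density $\varrho_u(y):=Z(u)^{-1}e^{\langle y,u\rangle}\varrho(y)$, which is again log-concave, being the product of the log-concave $\varrho$ with a log-affine factor.

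The heart of the matter is to bound $\sup_y\varrho_u(y)$. Its barycenter is $b(u)=\nabla\log Z(u)$; since $s\mapsto\log Z(su)$ is convex and vanishes at $s=0$, its slope at $s=1$ --- which equals $\langle b(u),u\rangle$ --- is at most $\log Z(2u)-\log Z(u)\le\log 2$. Applying (i) to $\varrho_u$ and then to $\varrho$ (whose barycenter is $0$), and using $Z(u)\ge1$, we obtain
$$
\sup_y\varrho_u(y)\le e^{n}\varrho_u(b(u))=e^{n}\,\frac{e^{\langle b(u),u\rangle}\varrho(b(u))}{Z(u)}\le 2e^{n}\sup_y\varrho(y)\le 2e^{2n}\varrho(0).
$$
Finally, from the tilting identity $\varrho(x)=Z(u)\,e^{-\langle x,u\rangle}\varrho_u(x)$ and $Z(u)\le2$ we get
$$
\varrho(x)\le 2\,e^{-|x|/(2C_0)}\sup_y\varrho_u(y)\le 4e^{2n}\varrho(0)\,e^{-|x|/(2C_0)}\le \varrho(0)\,e^{Cn-c|x|},\qquad C:=4,\ \ c:=\tfrac{1}{2C_0},
$$
which is the asserted bound.

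I expect the main obstacle to be fact (ii): the sub-exponential concentration of linear functionals of an isotropic log-concave vector with a dimension-free constant --- this is where the real content lies, and it must be invoked (or reproved from Borell's lemma and the comparison of $L^p$-norms of log-concave random variables). Everything else --- the convexity of the log-Laplace transform, the identity $\nabla\log Z(u)=\int y\,\mu_u(dy)$ valid inside the open domain of finiteness of $Z$, and the two applications of Fradelizi's centroid lemma --- is routine bookkeeping around the tilting identity $\varrho(x)=Z(u)e^{-\langle x,u\rangle}\varrho_u(x)$.
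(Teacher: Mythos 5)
Your argument is correct, but it is not "the paper's proof" in any meaningful sense: the paper does not prove Theorem~\ref{t-Klar} at all, it simply imports it from Klartag's work (Corollary~2.4 of \cite{Klartag07}). What you give is a self-contained alternative derivation by exponential tilting, and the chain of estimates checks out: $Z$ is finite (indeed $\le 2$) on the ball of radius $1/C_0$, so $u$ lies in the interior of the domain of finiteness and the identity $\nabla\log Z(u)=\int y\,\mu_u(dy)$ is legitimate; the convexity of $s\mapsto\log Z(su)$ with $\log Z(0)=0$, $\log Z(u)\ge 0$, $\log Z(2u)\le\log 2$ does give $\langle b(u),u\rangle\le\log 2$; the two applications of Fradelizi's centroid lemma (to $\varrho_u$ and to $\varrho$, whose barycenter is $0$ by isotropy) yield $\sup\varrho_u\le 2e^{2n}\varrho(0)$; and the tilting identity then gives $\varrho(x)\le 4e^{2n}\varrho(0)e^{-|x|/(2C_0)}\le\varrho(0)e^{4n-|x|/(2C_0)}$ for all $n\ge1$. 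Two remarks on where the weight of the argument sits: as you say yourself, the genuine content is concentrated in the two imported facts --- the dimension-free $\psi_1$ bound for one-dimensional log-concave variables of unit variance and Fradelizi's bound $\sup f\le e^n f(\mathrm{bar})$ --- which are classical but of comparable depth to the cited corollary of Klartag, so the proof is "elementary modulo standard log-concavity theory" rather than from scratch; and, as usual for such pointwise statements, one should fix the log-concave representative of the density so that $\varrho(0)$ and $\varrho(x)$ are well defined pointwise (this is also implicit in the statement as used in the paper, e.g.\ in Step~2 of Lemma~\ref{lem-dim}). With those caveats, your proof is a valid substitute for the citation, and it has the mild advantage of not using the isotropic-constant bound \eqref{isotr-est} or any covering/volume argument, only isotropy through the mean and the variance of one-dimensional marginals.
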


\subsection{Poincar\'e inequality for an isotropic log-concave measure}

We will also need the following Poincar\'e inequality for isotropic log-concave measures.
\begin{theorem}[see \cite{BobkIsop} or {\cite[Theorems 2.2.8 and 2.3.3]{A-AGM-2}}]\label{Poinc}
	There exists an absolute constant $C>0$ such that for every $n\in\mathbb{N}$, 
	every isotropic log-concave measure $\mu$ on $\mathbb{R}^n$, 
	and every locally Lipschitz function $f$ on $\mathbb{R}^n$, one has
	$$
	\int_{\mathbb{R}^n}\bigl(f-\mathbb{E}_\mu f\bigr)^2\, d\mu
	\le Cn\int_{\mathbb{R}^n}|\nabla f|^2\, d\mu,
	$$
	where $\displaystyle\mathbb{E}_\mu f:=\int_{\mathbb{R}^n} f\, d\mu$.
\end{theorem}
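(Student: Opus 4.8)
The plan is to reduce the $n$-dimensional Poincar\'e inequality to a one-dimensional statement via the localization lemma (Theorem~\ref{loc-lem}), following the classical argument of Kannan, Lov\'asz and Simonovits. First I would note that it suffices to prove the formally stronger scale-invariant bound
$$
\int_{\mathbb{R}^n}\bigl(f-\mathbb{E}_\mu f\bigr)^2\,d\mu\ \le\ C\Bigl(\int_{\mathbb{R}^n}|x-\bar x|^2\,\mu(dx)\Bigr)\int_{\mathbb{R}^n}|\nabla f|^2\,d\mu
$$
for every \emph{compactly supported} log-concave probability measure $\mu$ with barycenter $\bar x$: for an isotropic measure $\int|x|^2\,d\mu=\operatorname{tr}\operatorname{Cov}(\mu)=n$, and a general isotropic log-concave $\mu$ is a weak limit of its renormalized restrictions to large balls, whose second moments about their barycenters stay $\le 2n$, so by the exponential tail bound of Theorem~\ref{t-Klar} the inequality passes to the limit. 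After a translation I may assume $\bar x=0$; put $M:=\int|x|^2\,d\mu$, so that it remains to bound the Poincar\'e constant of $\mu$ by $CM$.

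Next I would pass to isoperimetry: by the classical Maz'ya--Cheeger inequality (Poincar\'e constant $\le 4h^{-2}$, with $h$ the Cheeger isoperimetric constant of $\mu$), it is enough to show $\mu^+(\partial A)\ge \frac{c}{\sqrt M}\min\{\mu(A),\mu(A^c)\}$ for every Borel set $A$. Assume this fails for some $A$ with $\mu(A)\le\tfrac12$. Replacing $\mathbf 1_A$ by a Lipschitz surrogate $\psi\colon K\to[0,1]$ turns the failure into $\int_K|\nabla\psi|\,d\mu<\frac{c}{\sqrt M}\int_K\psi\,d\mu$ with $\int\psi\,d\mu$ bounded away from $0$ and $1$. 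I would then apply Theorem~\ref{loc-lem} on $K=\operatorname{supp}\mu$ with $s=0$ (legitimate since $\mu$ is log-concave and $0\le\tfrac1{p+1}$), taking as the linear constraints $u_j$ the inequality recording the isoperimetric failure, the ones keeping $\int\psi\,d\mu$ balanced, and $\int(M-|x|^2)\,d\mu\ge 0$ (all satisfied by $\mu$ itself), and as the convex continuous functional $F$ a quantity that is positive precisely when isoperimetry fails. Since $F(\mu)>0$, the supremum of $F$ over this class is positive, hence by Theorem~\ref{loc-lem} attained at a measure $\nu$ with $\dim S(\nu)\le p$, whose density on $S(\nu)$ is log-concave by Theorem~\ref{T-Bor}; iterating the localization inside the low-dimensional flat would bring me to a one-dimensional \emph{needle} --- a log-concave probability measure $\nu$ on a line in $\mathbb{R}^n$ that still violates the one-dimensional isoperimetric inequality at scale $\sqrt M$ and satisfies $\int|t|^2\,d\nu\le M$, hence $\operatorname{Var}(\nu)\le M$.

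Finally I would settle the one-dimensional case: for any log-concave probability measure $\nu$ on $\mathbb{R}$ one has $h(\nu)\ge c\,(\operatorname{Var}\nu)^{-1/2}$, which is elementary since for a one-dimensional log-concave density the extremal isoperimetric set is a half-line and the ratio is computed directly. Combined with $\operatorname{Var}(\nu)\le M$, this contradicts the needle's violation, which proves $h(\mu)\ge c/\sqrt M$ and hence the theorem. The hard part will be the bookkeeping, not the ideas: (i) transporting the second-moment data through localization --- after recentering this is the single linear constraint $\int(M-|x|^2)\,d\mu\ge 0$, but together with the constraints forcing $\psi$ to stay balanced one typically lands at $\dim S(\nu)\le 2$ or $3$ and must descend further; and (ii) the uniform-in-$n$ passage between the ``hard'' perimeter $\mu^+(\partial A)$ and its ``soft'' surrogate $\int|\nabla\psi|\,d\mu$ (and back). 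An alternative avoiding localization: Borell's lemma shows that an isotropic log-concave measure concentrates exponentially at scale $\sqrt n$, and E.~Milman's equivalence, for log-concave measures, of the Poincar\'e inequality with exponential concentration then delivers the bound.
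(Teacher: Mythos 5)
The paper does not prove Theorem~\ref{Poinc} at all: it is quoted from the literature (see \cite{BobkIsop} and \cite[Theorems 2.2.8 and 2.3.3]{A-AGM-2}), so your sketch is competing with a citation rather than with an argument in the text. Your overall route --- reduce the Poincar\'e inequality to a Cheeger-type isoperimetric bound, localize to log-concave needles, and verify the one-dimensional inequality $h(\nu)\ge c(\operatorname{Var}\nu)^{-1/2}$ --- is the classical Kannan--Lov\'asz--Simonovits argument and is correct in principle; the alternative you mention (first-moment concentration from isotropy combined with E.~Milman's equivalence of concentration and isoperimetry for log-concave measures) is also a valid, if much heavier, derivation than the quoted Bobkov result.

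However, as written there is a genuine gap at the localization step. Theorem~\ref{loc-lem} with your three linear constraints (two keeping $\int\psi\,d\nu$ balanced, one for $\int(M-|x|^2)\,d\nu\ge 0$) only guarantees an extremizer supported on a flat of dimension at most $3$, and ``iterating the localization inside the low-dimensional flat'' does not descend further: the same three constraints persist, so the theorem again yields only $\dim S(\nu)\le 3$, never a needle. The classical proof reaches genuinely one-dimensional needles by a different device --- the Lov\'asz--Simonovits bisection (two-integral) localization of \cite{LS}, \cite{KLS}, with the moment quantity built into the two integrands rather than imposed as a separate constraint --- or else one must stop at dimension $\le 3$ and supply an independent proof of the Cheeger/Poincar\'e bound with constant proportional to the second moment in dimensions $1,2,3$ (this is exactly how the paper handles its own Theorem~\ref{CW-T}, via the dimension-dependent Lemma~\ref{lem-dim}); your sketch provides neither. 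Two smaller points: the passage between the hard perimeter $\mu^+(\partial A)$ and the Lipschitz surrogate, and the claim that half-lines are extremal for one-dimensional log-concave isoperimetry, are standard but are precisely the steps that require a reference or a proof. Since the result is classical, the cleanest course is simply to cite \cite{BobkIsop} or \cite{KLS}, as the paper does.
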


We note that the dimensional behavior of the sharp constant in this inequality is an open problem, 
known as the Kannan--Lov\'asz--Simonovits (KLS) conjecture (see \cite[Chapter~2]{A-AGM-2}). 

\subsection{Derivatives of log-concave functions}

Finally, we need some information about the total variation norms of the derivatives of log-concave densities. 
According to~\cite[Theorem 1]{Krug}, the density $\varrho$ of any absolutely continuous log-concave measure $\mu$ on $\mathbb{R}^n$ 
is a function of bounded variation, that is, $\varrho \in BV(\mathbb{R}^n)$.  
Furthermore, the total variation of its directional derivative is given by
\begin{equation}\label{Krug}
\|D_\theta \varrho\|_{\rm TV} = 
2|\theta| \int_{\langle\theta\rangle^\bot}
\sup_{t\in\mathbb{R}}\varrho(y+t\theta)\, dy,
\end{equation}
where $\langle \theta \rangle^\bot$ denotes the orthogonal complement 
of the one-dimensional subspace $\langle \theta \rangle := \{ t \theta \colon t \in \mathbb{R} \}$ spanned by the vector $\theta$.

\section{One-dimensional van der Corput type theorem for fractional regularity}\label{sec-one-dim}

We begin with the following key one-dimensional lemma.

\begin{lemma}\label{lem-key}
Let $\varrho \in C^\infty_0(\mathbb{R})$ and $f \in C^\infty(\mathbb{R})$.  
Assume that there exist a number $r \in \mathbb{N}$ and points 
$a_0 = -\infty< a_1< \ldots< a_{r-1}< a_r = +\infty$ such that
$$
\mathbb{R}\setminus \{a_1, \ldots, a_{r-1}\} 
= \bigsqcup_{j=1}^r (a_{j-1}, a_j),
$$
and for every $j \in \{1,\ldots,r\}$, either $f''(t) \ge 0$ for all $t \in (a_{j-1},a_j)$ 
or $f''(t) \le 0$ for all $t\in(a_{j-1},a_j)$.  
Then, for every $\varphi \in C_b^\infty(\mathbb{R})$, one has
$$
\int_{\mathbb{R}} \varphi'(f(t)) \varrho(t)\, dt
\le 
C r \varepsilon^{-1} \|\varphi\|_\infty \|\varrho'\|_{L^1(\mathbb{R})}
+ \|\varphi'\|_\infty \int_{\mathbb{R}} I_{\{|f'|\le 2\varepsilon\}}\, |\varrho(t)|\, dt
$$
for a universal constant $C>1$.
\end{lemma}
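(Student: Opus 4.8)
The plan is to integrate by parts in a way that trades one derivative of $\varphi$ for a derivative of $\varrho$, at the cost of dividing by $f'$. On each interval $(a_{j-1},a_j)$ the second derivative $f''$ has a constant sign, so $f'$ is monotone there; hence the set $\{|f'|\le 2\varepsilon\}$ intersected with $(a_{j-1},a_j)$ is a single subinterval, and on the complement $\{|f'|>2\varepsilon\}\cap(a_{j-1},a_j)$ we may write $\varphi'(f(t)) = \frac{d}{dt}\bigl(\varphi(f(t))\bigr)/f'(t)$ and integrate by parts. The boundary terms produced by this integration by parts are of size $\|\varphi\|_\infty$ divided by the value of $|f'|$ at the endpoints of these subintervals; at the genuine cut points $\{|f'| = 2\varepsilon\}$ this value is exactly $2\varepsilon$, giving a contribution $O(\varepsilon^{-1}\|\varphi\|_\infty)$ per endpoint, and there are $O(r)$ such endpoints since each interval contributes boundedly many. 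At the partition points $a_j$ themselves, $f'$ need not be bounded below, but there the boundary terms from adjacent intervals can be paired: $\varphi(f(a_j))/f'(a_j)$ appears with opposite signs from $(a_{j-1},a_j)$ and $(a_j,a_{j+1})$ and cancels, so these interior junctions contribute nothing. (One must be slightly careful if $a_j$ itself lies in $\{|f'|\le 2\varepsilon\}$, but then that point is already absorbed into the sublevel-set term and the pairing argument is unaffected.)

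After integration by parts, the remaining integral term is $-\int \varphi(f(t))\,\frac{d}{dt}\bigl(\varrho(t)/f'(t)\bigr)\,dt = -\int \varphi(f(t))\bigl(\varrho'(t)/f'(t) - \varrho(t)f''(t)/f'(t)^2\bigr)\,dt$, taken over $\{|f'|>2\varepsilon\}$. The first piece is bounded by $\|\varphi\|_\infty (2\varepsilon)^{-1}\|\varrho'\|_{L^1}$, which is of the stated form. The second piece, involving $f''/(f')^2$, is handled by observing that $\frac{d}{dt}\bigl(1/f'(t)\bigr) = -f''(t)/f'(t)^2$ has constant sign on each $(a_{j-1},a_j)$, so $\int_{\{|f'|>2\varepsilon\}\cap(a_{j-1},a_j)} |f''|/(f')^2\,dt = |\,1/f'(\cdot)\,|$ evaluated between the endpoints of that subinterval, which is again at most $(2\varepsilon)^{-1}$ (the larger endpoint value is $\le (2\varepsilon)^{-1}$ and the other is nonnegative). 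Summing over the $O(r)$ subintervals and bounding $|\varrho|$ by $\|\varrho\|_\infty$ — or more carefully, by incorporating this into a telescoping estimate against $\|\varrho'\|_{L^1}$ since $\varrho \in C_0^\infty$ forces $\|\varrho\|_\infty \le \frac12\|\varrho'\|_{L^1}$ — yields a bound $O(r)\varepsilon^{-1}\|\varphi\|_\infty\|\varrho'\|_{L^1}$. Finally, the integral over the discarded set $\{|f'|\le 2\varepsilon\}$ is trivially estimated by $\|\varphi'\|_\infty \int I_{\{|f'|\le 2\varepsilon\}}|\varrho|\,dt$, which is the second term in the claimed inequality.

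The main obstacle is the bookkeeping at the endpoints: one has to verify cleanly that the boundary terms arising from integration by parts either land on points where $|f'| = 2\varepsilon$ (controlled) or pair up and cancel across the partition points $a_j$, and that no uncontrolled term of the form $\|\varphi\|_\infty/|f'(a_j)|$ with $|f'(a_j)|$ possibly tiny survives. A convenient way to organize this is to enumerate all the subintervals of $\{|f'| > 2\varepsilon\}$ — at most $r$ of them — together with their endpoints, classify each endpoint as a "$2\varepsilon$-crossing" or a "partition point $a_j$," and note that partition-point endpoints occur in cancelling pairs (or are absorbed into the complementary set). Once that is set up, everything else is the routine estimation sketched above, and the universal constant $C$ simply collects the finitely many $O(1)$ factors.
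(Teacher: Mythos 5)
Your proposal is correct and follows essentially the same strategy as the paper's proof: split according to the sublevel set $\{|f'|\le 2\varepsilon\}$, integrate by parts against $1/f'$ on the complement, exploit the constant sign of $f''$ on each $(a_{j-1},a_j)$ to bound the contribution of $|f''|/(f')^2$ by $O(\varepsilon^{-1})$ per interval, and absorb $\|\varrho\|_\infty\le\tfrac12\|\varrho'\|_{L^1(\mathbb{R})}$. The only real difference is that the paper uses a smooth cutoff $\Phi_\varepsilon(f')=\Phi(f'/\varepsilon)$ in place of your sharp cutoff, which removes all boundary terms (at the price of one extra, easily bounded term involving $\Phi'$) and thus sidesteps the endpoint classification and cancellation bookkeeping at the points where $|f'|=2\varepsilon$ and at the $a_j$'s that your argument handles explicitly (and correctly).
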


\begin{proof}
We fix a function $\Phi\in C^\infty(\mathbb{R})$ 
such that
$\Phi(t) = 0$ for $t\in[-1, 1]$, $\Phi(t) = 1$ for $t\in \mathbb{R}\setminus[-2, 2]$,
and $\Phi(t)\in[0, 1]$ for every $t\in\mathbb{R}$.
For $\varepsilon>0$, let 
$$
\Phi_\varepsilon(t): = \Phi(t/\varepsilon)\quad \forall t\in \mathbb{R}.
$$

Integrating by parts, we obtain
\begin{align*}
\int_{\mathbb{R}}&\varphi'(f(t))\varrho(t)\, dt
\\
&= \int_{\mathbb{R}}\Bigl(\frac{d}{dt}\varphi(f(t))\Bigr)\frac{1}{f'(t)}\Phi_\varepsilon(f'(t))\varrho(t)\, dt
+ \int_{\mathbb{R}}\varphi'(f(t))\bigl(1 - \Phi_\varepsilon(f'(t))\bigr)\varrho(t)\, dt
\\
&=
-\int_{\mathbb{R}}\varphi(f(t))
\Bigr[\frac{f''(t)}{\varepsilon f'(t)}\Phi'(\varepsilon^{-1}f')
- \frac{f''(t)}{(f'(t))^2}\Phi_\varepsilon(f'(t))
\Bigl]\varrho(t)\, dt
\\
&-\int_{\mathbb{R}}\varphi(f(t))
\frac{1}{f'(t)}\Phi_\varepsilon(f'(t))\varrho'(t)\, dt
+ \int_{\mathbb{R}}\varphi'(f(t))\bigl(1 - \Phi_\varepsilon(f'(t))\bigr)\varrho(t)\, dt.
\end{align*}
From the definitions of $\Phi$ and $\Phi_\varepsilon$, it follows that the last expression can be bounded above by 
\begin{align*}
&\|\varphi\|_\infty
\int_{\mathbb{R}}
\Bigl[\frac{|f''(t)|}{\varepsilon |f'(t)|}\|\Phi'\|_\infty I_{\{\varepsilon\le|f'|\le 2\varepsilon\}}+
\frac{|f''(t)|}{(f'(t))^2}I_{\{|f'|\ge \varepsilon\}}
\Bigr]|\varrho(t)|\, dt
\\
&+\|\varphi\|_\infty\int_{\mathbb{R}}
\frac{1}{|f'(t)|} I_{\{|f'|\ge \varepsilon\}}|\varrho'(t)|\, dt
+ \|\varphi'\|_\infty\int_{\mathbb{R}}I_{\{|f'|\le 2\varepsilon\}}|\varrho(t)|\, dt
\\
&
\le
\|\varphi\|_\infty\bigl(2\|\Phi'\|_\infty + 1\bigr)
\|\varrho\|_\infty
\int_{\mathbb{R}}
\frac{|f''(t)|}{(f'(t))^2}I_{\{|f'|\ge \varepsilon\}}\, dt
\\
&+\|\varphi\|_\infty\varepsilon^{-1}\int_{\mathbb{R}}|\varrho'(t)|\, dt
+ \|\varphi'\|_\infty\int_{\mathbb{R}}I_{\{|f'|\le 2\varepsilon\}}|\varrho(t)|\, dt.
\end{align*}
We now observe that
$$
\frac{1}{2}
\int_{\mathbb{R}}
\frac{|f''(t)|}{(f'(t))^2}I_{\{|f'|\ge \varepsilon\}}\, dt
\le 
\int_{\mathbb{R}}
\frac{|f''(t)|}{(f'(t))^2+\varepsilon^2}\, dt.
$$
Furthermore, by the assumptions of the lemma,
\begin{align*}
\int_{\mathbb{R}}
\frac{|f''(t)|}{(f'(t))^2+\varepsilon^2}\, dt
&=
\sum_{j=1}^{r} \int_{a_{j-1}}^{a_j}
\frac{|f''(t)|}{(f'(t))^2+\varepsilon^2}\, dt
\\
&=
\sum_{j=1}^{r} \Bigl|\int_{a_{j-1}}^{a_j}
\frac{f''(t)}{(f'(t))^2+\varepsilon^2}\, dt\Bigr|
\le \pi r\varepsilon^{-1}.
\end{align*}
It can also be readily seen that
$$
\|\varrho\|_\infty \le \frac{1}{2}\int_{\mathbb{R}}|\varrho'(t)|\, dt = \frac{1}{2}\|\varrho'\|_{L^1(\mathbb{R})}.
$$
Summing up all the above estimates, we arrive at
\begin{align*}
\int_{\mathbb{R}}&\varphi'(f(t))\varrho(t)\, dt
\\
&\le 
\bigl((2\|\Phi'\|_\infty + 1)\pi r+1\bigr)\varepsilon^{-1}\|\varphi\|_\infty\|\varrho'\|_{L^1(\mathbb{R})}
+ \|\varphi'\|_\infty\int_{\mathbb{R}}I_{\{|f'|\le 2\varepsilon\}}\varrho(t)\, dt,
\end{align*}
which is exactly the claimed bound.
\end{proof}

\begin{corollary}\label{cor-key}
Let $\varrho\in C^\infty_0(\mathbb{R})$, $f\in C^\infty(\mathbb{R})$, 
$k\in \mathbb{N}$,
and assume that either $k\ge 3$ 
and $f^{(k)}(t)>0$ for all $t\in \mathbb{R}$ or $k=2$ and
$f''(t)\ge 0$ for all $t\in\mathbb{R}$.
Then, for every $\varphi \in C^\infty_b(\mathbb{R})$,
$$
\int_{\mathbb{R}}\varphi'(f(t))\varrho(t)\, dt
\le 
C(k-1)\varepsilon^{-1}\|\varphi\|_\infty\|\varrho'\|_{L^1(\mathbb{R})}
+ \|\varphi'\|_\infty\int_{\mathbb{R}}I_{\{|f'|\le 2\varepsilon\}}|\varrho(t)|\, dt
$$
for a universal constant $C>1$.
\end{corollary}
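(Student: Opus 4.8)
The plan is to derive Corollary~\ref{cor-key} from Lemma~\ref{lem-key} by controlling the number of intervals of constant concavity of $f$ (i.e.\ the number of sign changes of $f''$) in terms of $k$. Under the hypothesis of the corollary, $f^{(k)}$ has constant sign on all of $\mathbb{R}$ (strictly positive if $k\ge3$; and $f''\ge0$ if $k=2$), so I would argue by counting zeros of $f''$ (with multiplicity, or rather sign changes). If $k=2$ the hypothesis already says $f''\ge0$ everywhere, so Lemma~\ref{lem-key} applies with $r=1=k-1$ directly. If $k\ge3$, then $f^{(k)}>0$ means $f^{(k-1)}$ is strictly increasing, hence has at most one zero, hence at most one sign change; then $f^{(k-2)}$ is a function whose derivative changes sign at most once, so $f^{(k-2)}$ changes from decreasing to increasing at most once and thus has at most two sign changes; inductively, $f^{(j)}$ has at most $k-1-j$ sign changes. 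Taking $j=2$ gives that $f''$ has at most $k-3$ sign changes, hence $\mathbb{R}$ splits into at most $r\le k-2$ intervals on each of which $f''$ has constant sign.

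Wait — I should be careful about the exact count. Let me restructure: the elementary fact I would use is that if a differentiable function $g$ has the property that $g'$ changes sign at most $p$ times on $\mathbb{R}$, then $g$ is piecewise monotone with at most $p+1$ pieces, hence $g$ changes sign at most $p+1$ times. Starting from the fact that $f^{(k-1)}$ is strictly monotone (so it changes sign at most once: $0$ sign changes if $k-1$ is such that... no, at most $1$ sign change), I get by descending induction that $f^{(j)}$ changes sign at most $k-1-j$ times for $2\le j\le k-1$; in particular $f''$ changes sign at most $k-3$ times when $k\ge3$. A function changing sign at most $k-3$ times partitions $\mathbb{R}$ into at most $k-2$ intervals of constancy of sign. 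Plugging $r\le k-2\le k-1$ into Lemma~\ref{lem-key} yields the claimed bound with the factor $k-1$ (the factor $k-1$ rather than $k-2$ gives a little slack and also covers $k=2$ uniformly, since there $r=1=k-1$).

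So the proof is short: first dispose of the case $k=2$ (hypothesis gives $r=1$); then for $k\ge3$ state and quickly verify the zero-counting lemma — $f^{(k)}>0$ forces $f^{(k-1)}$ strictly increasing hence monotone, and then the descending induction ``$g'$ changes sign $\le p$ times $\Rightarrow g$ piecewise monotone with $\le p+1$ pieces $\Rightarrow g$ changes sign $\le p+1$ times'' — to conclude $f''$ changes sign at most $k-3$ times, so the partition hypothesis of Lemma~\ref{lem-key} holds with $r\le k-2\le k-1$. Finally apply Lemma~\ref{lem-key} with this $r$ and absorb $r\le k-1$ into the constant, giving exactly the stated inequality (with the same universal $C$, or $C$ enlarged by an absolute factor).

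The only mildly delicate point — really the ``main obstacle,'' though it is minor — is making the sign-change counting fully rigorous with the right constant: one must be careful that ``$f^{(k)}>0$ on all of $\mathbb{R}$'' (an open unbounded interval) does give monotonicity of $f^{(k-1)}$ and hence at most one sign change, and that the inductive step from ``$f^{(j+1)}$ changes sign $\le m$ times'' to ``$f^{(j)}$ changes sign $\le m+1$ times'' is valid. A clean way to phrase it: between consecutive sign changes of $f^{(j+1)}$, the function $f^{(j)}$ is strictly monotone, so on the (at most $m+1$) resulting open intervals $f^{(j)}$ has at most one sign change each, but — being monotone across the partition — the total number of sign changes of $f^{(j)}$ is still at most $m+1$, not $(m+1)\cdot$(something). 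This is standard but worth stating precisely. Once this is pinned down, the rest is immediate from Lemma~\ref{lem-key}.
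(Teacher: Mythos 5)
Your approach is essentially the paper's: the paper also deduces the corollary from Lemma~\ref{lem-key} by counting, via Rolle-type/monotonicity arguments descending from $f^{(k)}>0$, how many times $f''$ can vanish or change sign, concluding $r\le k-1$, and treating $k=2$ separately with $r=1$. One small correction, though: your induction formula ``$f^{(j)}$ changes sign at most $k-1-j$ times'' is inconsistent with your own base case ($f^{(k-1)}$ strictly increasing has at most \emph{one} sign change, not zero) and is false in general — for $k=3$ take $f''(t)=t$, which satisfies $f'''>0$ and changes sign once, whereas your formula would predict $k-3=0$ sign changes. The correct count from your own inductive step is that $f^{(j)}$ changes sign at most $k-j$ times, so $f''$ has at most $k-2$ sign changes and $\mathbb{R}$ splits into at most $r\le k-1$ (not $k-2$) intervals of constant sign of $f''$; since you only use $r\le k-1$ when invoking Lemma~\ref{lem-key}, the final bound $C(k-1)$ is unaffected, and the argument stands once the indexing is fixed.
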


\begin{proof}
The assumption $f^{(k)}(t)>0$ for all $t\in\mathbb{R}$ with $k\ge 3$ implies that $f^{(k)}$ has no zeroes on $\mathbb{R}$. By Rolle's lemma, $f^{(k-1)}$ can then have at most one zero. Arguing inductively, we see that $f^{(k-j)}$ can have at most $j$ zeroes on $\mathbb{R}$. In particular, $f''$ has at most $k-2$ zeroes. This shows that the assumptions of the previous lemma are satisfied with $r=k-1$. 
In the case $k=2$, the assumptions of the previous lemma hold with $r=1$.
\end{proof}

\begin{remark}\label{rem-key}
Let $\varrho\in C^\infty_0(\mathbb{R})$ and let $f\in\mathcal{P}_d(\mathbb{R})$ with $d\ge 1$. Then, for every $\varphi \in C^\infty_b(\mathbb{R})$, 
$$
\int_{\mathbb{R}}\varphi'(f(t))\varrho(t)\, dt
\le 
Cd\varepsilon^{-1}\|\varphi\|_\infty\|\varrho'\|_{L^1(\mathbb{R})}
+ \|\varphi'\|_\infty\int_{\mathbb{R}}I_{\{|f'|\le 2\varepsilon\}}|\varrho(t)|\, dt
$$
for a universal constant $C>1$. Indeed, $f$ is of the form 
$$
f(t)=a_0+a_1t+\cdots+a_m t^m,\quad m\le d,
$$
with $a_m\ne 0$. Therefore, $f^{(m)}(t)=m!a_m\equiv \text{const}$.  

\noindent
If $m\ge 2$, one can apply Corollary~\ref{cor-key} to $f$ (or to $-f$ if $a_m<0$) with $k=m$.  

\noindent
If $m\in\{0,1\}$, then $f''(t)=0$, and once again Corollary~\ref{cor-key} applies to $f$ with $k=2$. In this case, $k-1=1\le d$.  
\end{remark}

To obtain a result on the fractional regularity 
of image measures in the one-dimensional setting, 
we will follow the argument of 
A.~Carbery, M.~Christ, and J.~Wright \cite{CCW99}. 
In particular, we will rely on the technical lemma below.

\begin{lemma}[see {\cite[Lemma 2.3]{CCW99}} or {\cite[Lemma 2.6.5]{Gr14}}]\label{CW-lem}
Let $a_0, \ldots, a_k$ be distinct real numbers.	
Let $a=\min\limits_{0\le j\le k} a_j$, $b=\max\limits_{0\le j\le k}a_j$, and let $f$ be a real-valued function such that $f \in C^{k-1}([a, b]) \cap C^k((a, b))$. Then there exists a point $y \in (a, b)$ such that
$$
f^{(k)}(y) = \sum_{m=0}^{k}c_mf(a_m),
$$
where $c_m=(-1)^kk!\prod\limits_{\substack{j=0\\ j\ne m}}^k(a_j-a_m)^{-1}$.
\end{lemma}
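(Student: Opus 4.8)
The plan is to recognize the right-hand side $\sum_{m=0}^k c_m f(a_m)$ as $k!$ times the $k$-th divided difference of $f$ at the nodes $a_0,\ldots,a_k$, and then to invoke the classical mean value theorem for divided differences, which is proved by iterating Rolle's theorem. So the argument splits into an algebraic identification of the constant and an analytic existence step.

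First I would introduce the Lagrange interpolation polynomial $p$ of degree at most $k$ determined by $p(a_m)=f(a_m)$ for $m=0,\ldots,k$, namely
$$
p(x)=\sum_{m=0}^k f(a_m)\prod_{\substack{j=0\\ j\ne m}}^k\frac{x-a_j}{a_m-a_j}.
$$
Reading off the coefficient of $x^k$ gives $\mathrm{lead}(p)=\sum_{m=0}^k f(a_m)\prod_{j\ne m}(a_m-a_j)^{-1}$. Since the product $\prod_{j\ne m,\,0\le j\le k}$ has exactly $k$ factors, one has $\prod_{j\ne m}(a_m-a_j)=(-1)^k\prod_{j\ne m}(a_j-a_m)$, and therefore
$$
k!\,\mathrm{lead}(p)=\sum_{m=0}^k(-1)^k k!\,\Bigl(\prod_{j\ne m}(a_j-a_m)\Bigr)^{-1}f(a_m)=\sum_{m=0}^k c_m f(a_m).
$$
Thus it suffices to produce a point $y\in(a,b)$ with $f^{(k)}(y)=k!\,\mathrm{lead}(p)$.

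For the existence step I would set $g:=f-p$. By construction $g$ vanishes at the $k+1$ distinct points $a_0,\ldots,a_k$, all lying in $[a,b]$; after ordering these nodes, Rolle's theorem applied on each of the $k$ consecutive subintervals yields at least $k$ zeros of $g'$ in $(a,b)$, and iterating this shows that $g^{(j)}$ has at least $k+1-j$ zeros in $(a,b)$ for every $j\le k$. The hypotheses $f\in C^{k-1}([a,b])\cap C^k((a,b))$ are exactly what is needed to run this iteration: each $g^{(j)}$ with $j\le k-1$ is continuous on $[a,b]$ (hence on every nested closed subinterval) and differentiable on $(a,b)$, so every application of Rolle's theorem is legitimate and all intermediate zeros stay inside the open interval. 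In particular there exists $y\in(a,b)$ with $g^{(k)}(y)=0$; since $p$ has degree at most $k$, $p^{(k)}$ is the constant $k!\,\mathrm{lead}(p)$, and hence $f^{(k)}(y)=k!\,\mathrm{lead}(p)=\sum_{m=0}^k c_m f(a_m)$, as claimed. There is no real obstacle here, as this is a classical fact; the only points that require care are the sign bookkeeping when matching $k!\,\mathrm{lead}(p)$ with the stated coefficients $c_m$ — the factor $(-1)^k$ coming from reversing the $k$ differences $a_m-a_j$ — and verifying that the assumed regularity of $f$ suffices for $k$ successive applications of Rolle's theorem within $(a,b)$.
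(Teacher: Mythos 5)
Your proof is correct. Note that the paper itself does not prove this lemma but imports it verbatim from the cited sources (Lemma 2.3 in Carbery--Christ--Wright, Lemma 2.6.5 in Grafakos), and your argument --- identifying $\sum_m c_m f(a_m)$ as $k!$ times the leading coefficient of the Lagrange interpolant (with the $(-1)^k$ correctly accounting for reversing the $k$ factors $a_m-a_j$) and then applying Rolle's theorem $k$ times to $f-p$, for which the hypothesis $f\in C^{k-1}([a,b])\cap C^k((a,b))$ is exactly sufficient --- is the standard divided-difference proof underlying those references, so no discrepancy arises.
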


\begin{lemma}
Let $\mu$ be an absolutely continuous finite positive Borel measure on $\mathbb{R}$ with a bounded density $\varrho$, and let $E \subset \mathbb{R}$ be a measurable set of 
positive $\mu$-measure.
Then there exist points $a_0,\ldots,a_k \in E$ such that
$$
\|\varrho\|_\infty^k \prod\limits_{\substack{j=0\\ j\ne m}}^k|a_j-a_m|\ge \bigl(\mu(E)/4e\bigr)^k\quad \forall m\in\{0, \ldots, k\}.
$$
\end{lemma}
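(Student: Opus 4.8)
The plan is to choose the points $a_0, \dots, a_k$ greedily, extracting them one at a time from a set where the density is not too large, so that consecutive points are forced to be well-separated. First I would discard the part of $E$ where $\varrho$ is large: since $\varrho$ is bounded, the set $E' := E \cap \{\varrho \le 2\|\varrho\|_\infty \mu(E)/\lambda(E)\}$... no, wait — a cleaner route is to work directly with $\lambda$-length. Observe that if $F \subset \mathbb{R}$ is measurable with $\mu(F) > 0$, then $\mu(F) \le \|\varrho\|_\infty \lambda(F)$, so $\lambda(F) \ge \mu(F)/\|\varrho\|_\infty$; moreover, inside any interval $I$ of length $\ell$ one has $\mu(I) \le \|\varrho\|_\infty \ell$. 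This suggests building the $a_m$ so that each new point is at distance at least $\mu(E)/(2\|\varrho\|_\infty(k+1))$ or so from the previously chosen ones, which on its own gives pairwise separation but not the full product bound $\prod_{j \ne m}|a_j - a_m| \ge (\mu(E)/4e\|\varrho\|_\infty)^k$ — for that we need the points spread out at geometrically increasing scales, not merely pairwise separated.

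The key step is therefore to select points at dyadically growing distances. Concretely, I would argue as follows: set $L := \mu(E)/(2\|\varrho\|_\infty)$, so that by the above $\mu(E \cap I) \le \mu(E)/2$ for every interval $I$ of length $L$; hence $E$ meets the complement of every such interval in positive $\mu$-measure, and in particular $E$ is not contained in any interval of length $L$. Now pick $a_0 \in E$ arbitrarily (in the support of $\mu|_E$). Having chosen $a_0, \dots, a_{m-1}$, consider the interval $I_m$ centered at the arithmetic mean (or at $a_0$) of length $2^{m} L' $ for a suitable $L'$; one shows $\mu(E \cap I_m) < \mu(E)$ by summing the contributions, so there is a point $a_m \in E \setminus I_m$. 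The precise bookkeeping — ensuring that after removing the union of the intervals around the earlier points one still has positive $\mu$-measure left, and that the resulting distances $|a_j - a_m|$ multiply up to at least $(\mu(E)/4e\|\varrho\|_\infty)^k$ — is where the constant $4e$ comes from: a product of the form $\prod_{i=1}^{k}(L/i)$ or $\prod 2^{-i}L$ type expression, estimated by Stirling, produces the $e^{k}$ (and a factor $k!$ or similar that one must check divides out or is absorbed). I would set up the scales so that $|a_j - a_m| \ge c_{|j-m|} L$ with $\prod_{j \ne m} c_{|j-m|} \ge (1/4e)^k \|\varrho\|_\infty^{?}$... — actually the cleanest is: arrange $a_0 < a_1 < \cdots < a_k$ with $a_m - a_{m-1} \ge L/k$ for each $m$ AND additionally large gaps, but the honest approach is the standard one in \cite{CCW99}-type arguments: partition a length-$kL$ portion of (a translate accessible within) $E$ into $k$ pieces and use pigeonhole — I expect the paper does something like choosing $a_m$ in the $m$-th of $k+1$ consecutive slabs each of $\mu$-measure roughly $\mu(E)/(k+1)$.

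Let me restate the plan in that cleaner form, which I believe is what is intended. Since $\varrho$ is bounded, for any threshold $t$ the set $E_t := E \cap \{\lambda\text{-density argument}\}$... — concretely: there exist reals $t_0 < t_1 < \cdots < t_k$ with $\mu(E \cap (-\infty, t_0]) = \mu(E \cap (t_{m-1}, t_m]) = \mu(E)/(k+1)$ for $m = 1, \dots, k-1$ and $\mu(E \cap (t_{k-1}, +\infty)) = \mu(E)/(k+1)$ — i.e., slice $E$ into $k+1$ equal-$\mu$-mass pieces by the quantile points of $\mu|_E$. Pick $a_m$ in the $m$-th piece. Then for $m < m'$ we have $|a_{m'} - a_m| \ge t_{m'-1} - t_m \ge$ (number of full intervening slabs)$\times(\text{min slab width})$; but slab widths can be small where $\varrho$ is large. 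The fix: each slab $(t_{j-1}, t_j]$ has $\mu$-mass $\mu(E)/(k+1)$, hence $\lambda$-width $\ge \mu(E)/((k+1)\|\varrho\|_\infty)$, since mass $\le \|\varrho\|_\infty \times$ width. Therefore $|a_{m'} - a_m| \ge (m' - m - 1)\cdot \mu(E)/((k+1)\|\varrho\|_\infty)$ — and refining the slicing into more pieces, one gets $|a_{m'}-a_m| \ge \tfrac{1}{2}|m'-m|\,\mu(E)/((k+1)\|\varrho\|_\infty)$ roughly. Then $\prod_{j \ne m}|a_j - a_m| \ge \prod_{i=1}^{m}\big(\tfrac{i}{2}\,\tfrac{\mu(E)}{(k+1)\|\varrho\|_\infty}\big)\prod_{i=1}^{k-m}\big(\tfrac{i}{2}\,\tfrac{\mu(E)}{(k+1)\|\varrho\|_\infty}\big) = m!(k-m)!\,2^{-k}\big(\tfrac{\mu(E)}{(k+1)\|\varrho\|_\infty}\big)^{k}$, and since $m!(k-m)! \ge k!/\binom{k}{m} \ge k!/2^k \ge (k/e)^k/2^k$ by Stirling while $(k+1)^k$ in the denominator combines with $(k/e)^{-k}$... one checks $m!(k-m)!\,2^{-k}(k+1)^{-k} \ge (4e)^{-k}$, giving $\prod_{j\ne m}|a_j-a_m| \ge \big(\mu(E)/(4e\|\varrho\|_\infty)\big)^{k}$, i.e. $\|\varrho\|_\infty^k \prod_{j\ne m}|a_j-a_m| \ge (\mu(E)/4e)^k$, as claimed.

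The main obstacle is the second half: verifying that the lower bounds on the gaps $|a_j - a_m|$ (which individually are only $\gtrsim \mu(E)/((k+1)\|\varrho\|_\infty)$) multiply to the clean $(4e)^{-k}$ constant. This requires choosing the quantile partition at the right granularity (so that $a_m$ and $a_{m'}$ are separated by $\sim|m'-m|$ slabs), applying the width-$\ge$-mass/$\|\varrho\|_\infty$ bound slab-by-slab, and then a Stirling-type estimate $m!(k-m)! \ge (k+1)^k \cdot 2^{-k}\cdot(4e)^{-k}\cdot(\text{const})$ — a short but slightly delicate arithmetic check involving $\binom{k}{m}\le 2^k$ and $k! \ge (k/e)^k$. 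Everything else (boundedness of $\varrho$ giving mass $\le$ width times sup, existence of quantile points by continuity of $s \mapsto \mu(E \cap (-\infty, s])$, which may need a harmless perturbation if $\mu$ has atoms — but here $\mu$ is absolutely continuous so the distribution function is continuous) is routine.
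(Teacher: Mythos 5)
Your final plan (the quantile--slab version; the greedy and dyadic sketches you abandon earlier are not needed) is correct and rests on the same underlying mechanism as the paper's proof --- quantiles of $\mu$ restricted to $E$, converted into distances via $\mu(I)\le\|\varrho\|_\infty\,\lambda(I)$, then $m!\,(k-m)!=k!/\binom{k}{m}$ and a Stirling bound --- but the implementation is genuinely different. The paper introduces $F(x)=\mu\bigl((-\infty,x]\cap E'\bigr)$ for a compact $E'\subset E$ with $\mu(E')\ge\mu(E)/2$, shows that $F\colon E'\to[0,\mu(E')]$ is surjective, and places the points \emph{exactly} at the quantiles $F(a_j)=\tfrac{j}{k}\mu(E')$; the Lipschitz bound $|F(a_j)-F(a_m)|\le\|\varrho\|_\infty|a_j-a_m|$ then gives $\|\varrho\|_\infty^k\prod_{j\ne m}|a_j-a_m|\ge \mu(E')^k\,m!(k-m)!/k^k\ge(\mu(E')/2e)^k\ge(\mu(E)/4e)^k$, the two factors of $2$ coming from the compact core and from $\binom{k}{m}\le2^k$. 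You instead keep all of $E$, cut it into equal-mass slabs and put each $a_m$ somewhere inside a slab, paying for the loss of exact quantile placement with the alternate-slab device. Both of the points you flag as delicate do go through, but should be made explicit: slicing into $2(k+1)$ pieces of mass $\mu(E)/(2(k+1))$ and choosing $a_m$ in the $(2m)$-th piece gives $|a_{m'}-a_m|\ge(2|m'-m|-1)\tfrac{\mu(E)}{2(k+1)\|\varrho\|_\infty}\ge\tfrac{|m'-m|}{2}\cdot\tfrac{\mu(E)}{(k+1)\|\varrho\|_\infty}$, which is exactly your claimed separation; and the inequality $m!(k-m)!\,2^{-k}(k+1)^{-k}\ge(4e)^{-k}$ is true, but your cited ingredients $\binom{k}{m}\le2^k$ and $k!\ge(k/e)^k$ alone only yield $(4e)^{-k}\bigl(\tfrac{k}{k+1}\bigr)^k$, which falls short by the factor $(1+1/k)^{-k}$; you need the slightly sharper $k!\ge e\,(k/e)^k$ (valid for all $k\ge1$), which absorbs $(1+1/k)^k\le e$. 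With these two details written out your argument is complete and gives the stated constant; the paper's exact-quantile route avoids the $(k+1)^k$ versus $k^k$ issue, so it gets by with the cruder Stirling bound, at the price of the compactness/surjectivity argument needed to realize the quantile values at points of $E$.
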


\begin{proof}
The proof follows the argument from Lemma 2.6.6 in  \cite{Gr14}.

Let $E'\subset E$ be a compact set such that $\mu(E\setminus E')<\mu(E)/2.$
Let 
$$
F(x):=\mu\bigl((-\infty, x]\cap E'\bigr).
$$
We observe that
$$
|F(x) - F(y)| = \Bigl|\int_{E'\cap (y, x]} \varrho(t)\, dt \Bigr|
\le \|\varrho\|_\infty|x-y|.
$$
In particular, the function $F$ is non-decreasing and continuous,
and therefore the 
mapping $F\colon E'\to [0, \mu(E')]$ is surjective.
Indeed, by the intermediate value theorem, $F$
is 
surjective as a mapping $F\colon[\inf E', \sup E']\to [0, \mu(E')]$. 
Let $y\in (0, \mu(E'))$ and suppose that
$F(x) = y$ for some $x\not \in E'$. Then 
$$
x\in [\inf E', \sup E']\setminus E' = \sqcup_j (a_j , b_j),
$$
where the intervals are disjoint. 
Hence, there exists a unique index $j_0$ such that 
$x\in (a_{j_0}, b_{j_0})$.
Since
$$ 
F(b_{j_0}) - F(a_{j_0})=
\mu \bigl((a_{j_0}, b_{j_0}]\cap E'\bigr) = 
\mu \bigl((a_{j_0}, b_{j_0})\cap E'\bigr) = 0,
$$
it follows that 
$F(a_{j_0})\le F(x)\le F(b_{j_0}) = F(a_{j_0})$. Since the intervals were disjoint,
we have $a_{j_0}\in E'$ and $F(a_{j_0}) = y$.

Now, let $a_j\in E'$ be chosen such that $F(a_j) = \frac{j}{k}\mu(E')$.
For this choice of points, we have
\begin{align*}
\|\varrho\|_\infty^k
\prod\limits_{\substack{j=0\\ j\ne m}}^k|a_j-a_m|
&\ge \prod\limits_{\substack{j=0\\ j\ne m}}^k\bigl|F(a_j)-F(a_m)\bigr|
=
\bigl(\mu(E')\bigr)^k
\prod\limits_{\substack{j=0\\ j\ne m}}^k\Bigl|\frac{j}{k}-\frac{m}{k}\Bigr|
\\
&=
\bigl(\mu(E')\bigr)^k\frac{(k-m)!m!}{k^k}
= \bigl(\mu(E')\bigr)^k\frac{k!}{k^kC_k^m}
\\
&\ge \bigl(\mu(E')\bigr)^k\frac{(k/e)^k}{k^k 2^k}
= \bigl(\mu(E')/2e\bigr)^k\ge \bigl(\mu(E)/4e\bigr)^k.
\end{align*}
The lemma is proved.
\end{proof}

\begin{corollary}\label{sub-level}
Let $\mu$ be an absolutely continuous 
finite positive Borel measure on $\mathbb{R}$
with a bounded density $\varrho$. 
Let $k\in \mathbb{N}$ and $f\in C^\infty(\mathbb{R})$ be such that
$f^{(k)}(t)\ge1$ for all $t\in \mathbb{R}$.
Then
$$
\mu(t\in\mathbb{R}\colon |f(t)|\le \varepsilon)\le 8ek\|\varrho\|_\infty\cdot \varepsilon^{1/k}\quad \forall \varepsilon>0.
$$ 
\end{corollary}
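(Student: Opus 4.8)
The plan is to transcribe the classical Carbery--Christ--Wright sublevel-set argument to the measure $\mu$, using the two lemmas immediately preceding this corollary. Fix $\varepsilon>0$ and put $E:=\{t\in\mathbb{R}\colon |f(t)|\le\varepsilon\}$. If $\mu(E)=0$ there is nothing to prove, so assume $\mu(E)>0$; we may also assume $0<\|\varrho\|_\infty<\infty$, since $\|\varrho\|_\infty=0$ would force $\mu\equiv 0$. Recall that $f\in C^\infty(\mathbb{R})$ and $f^{(k)}\ge 1>0$ on $\mathbb{R}$. First I would apply the preceding lemma to the set $E$, obtaining points $a_0,\ldots,a_k\in E$ with
$$
\|\varrho\|_\infty^k\prod_{\substack{j=0\\ j\ne m}}^k|a_j-a_m|\ge\bigl(\mu(E)/4e\bigr)^k\qquad\forall m\in\{0,\ldots,k\}.
$$
Since the right-hand side is strictly positive, the $a_m$ are automatically pairwise distinct, so with $a:=\min_m a_m$ and $b:=\max_m a_m$ I may invoke Lemma~\ref{CW-lem} for $f$ on $[a,b]$: there is $y\in(a,b)$ with
$$
f^{(k)}(y)=\sum_{m=0}^k c_m f(a_m),\qquad c_m=(-1)^k k!\prod_{\substack{j=0\\ j\ne m}}^k(a_j-a_m)^{-1}.
$$

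Then I would combine three facts: $f^{(k)}(y)\ge 1$; $|f(a_m)|\le\varepsilon$ for every $m$, since $a_m\in E$; and $|c_m|=k!\prod_{j\ne m}|a_j-a_m|^{-1}\le k!\,(4e\|\varrho\|_\infty/\mu(E))^k$, which is exactly the content of the displayed product bound. This gives
$$
1\le|f^{(k)}(y)|\le\sum_{m=0}^k|c_m|\,|f(a_m)|\le\varepsilon\,(k+1)!\Bigl(\frac{4e\|\varrho\|_\infty}{\mu(E)}\Bigr)^k,
$$
which rearranges to $\mu(E)\le 4e\|\varrho\|_\infty\varepsilon^{1/k}\bigl((k+1)!\bigr)^{1/k}$. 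The proof then finishes with the elementary estimate $(k+1)!=2\cdot 3\cdots(k+1)\le(k+1)^k\le(2k)^k$ for $k\ge 1$, whence $\bigl((k+1)!\bigr)^{1/k}\le 2k$ and $\mu(E)\le 8ek\|\varrho\|_\infty\varepsilon^{1/k}$, as claimed.

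This argument has no serious obstacle: the heavy lifting is done by the two preceding lemmas. The only points that need attention are the reduction to the case $\mu(E)>0$ with $\|\varrho\|_\infty$ finite and positive, the observation that the interpolation nodes supplied by the previous lemma are necessarily distinct (so that Lemma~\ref{CW-lem} genuinely applies on a nondegenerate interval), and careful bookkeeping of the numerical constants so that the final constant comes out as exactly $8ek$ rather than something larger.
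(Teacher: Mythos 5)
Your proposal is correct and follows essentially the same route as the paper: extract the interpolation nodes $a_0,\ldots,a_k\in E$ from the preceding lemma, apply Lemma~\ref{CW-lem} to express $f^{(k)}(y)$ via $f(a_0),\ldots,f(a_k)$, and combine $f^{(k)}(y)\ge 1$, $|f(a_m)|\le\varepsilon$, and the product lower bound to get $\mu(E)\le 4e\bigl((k+1)!\bigr)^{1/k}\|\varrho\|_\infty\varepsilon^{1/k}\le 8ek\|\varrho\|_\infty\varepsilon^{1/k}$. Your added remarks (distinctness of the nodes, positivity and finiteness of $\|\varrho\|_\infty$) are harmless refinements of details the paper leaves implicit.
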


\begin{proof}
Let $E=\{t\in\mathbb{R}\colon |f(t)|\le \varepsilon\}$.
If $\mu(E)=0$ then the estimate is true.
Assume that $\mu(E)>0$. By the previous lemma,
there exist points $a_0, \ldots, a_k\in E$ such that
$$
\|\varrho\|_\infty^k\prod\limits_{\substack{j=0\\ j\ne m}}^k|a_j-a_m|\ge \bigl(\mu(E)/4e\bigr)^k\quad \forall m\in\{0, \ldots, k\}.
$$
On the other hand, by Lemma \ref{CW-lem},
$$
f^{(k)}(y)=(-1)^kk!\sum_{m=0}^{k}\prod\limits_{\substack{j=0\\ j\ne m}}^k(a_j-a_m)^{-1}f(a_m)
$$
for some $y\in \mathbb{R}$. Therefore,
$$
1\le f^{(k)}(y)\le k! \sum_{m=0}^{k}\prod\limits_{\substack{j=0\\ j\ne m}}^k|a_j-a_m|^{-1}\cdot |f(a_m)|\le (k+1)!(4e)^k\|\varrho\|_\infty^k\bigl(\mu(E)\bigr)^{-k}\varepsilon
$$
implying the estimate
$$
\mu(E)\le ((k+1)!)^{1/k}4e\|\varrho\|_\infty\cdot \varepsilon^{1/k}\le 8ek\|\varrho\|_\infty\cdot \varepsilon^{1/k}.	
$$
The corollary is proved.
\end{proof}

\begin{theorem}\label{T-1d-reg}
Let $\varrho\in BV(\mathbb{R})$, $k\in \mathbb{N}$, $k\ge 2$, and let $f\in C^\infty(\mathbb{R})$ be such that $f^{(k)}(t)\ge1$ for all $t\in\mathbb{R}$.
Then, for every $\varphi \in C^\infty_b(\mathbb{R})$,
$$
\int_{\mathbb{R}}\varphi'(f(t))\varrho(t)\, dt
\le 
Ck \|\varrho\|_\infty^{1-1/k}\|\varrho'\|_{\rm TV}^{1/k}
\|\varphi\|_\infty^{1/k}\|\varphi'\|_\infty^{1-1/k}
$$
for a universal constant $C>1$.
\end{theorem}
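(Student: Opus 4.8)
The plan is to combine the two one-dimensional ingredients already established, namely the pointwise-in-$\varepsilon$ bound of Corollary~\ref{cor-key} and the sublevel-set estimate of Corollary~\ref{sub-level}, and then optimize in $\varepsilon$. First I would dispose of the degenerate cases (when one of $\|\varphi\|_\infty$, $\|\varphi'\|_\infty$, $\|\varrho\|_\infty$, $\|\varrho'\|_{\rm TV}$ vanishes, forcing $\varphi$ or $\varrho$ to be constant and both sides of the claimed inequality to be $0$), and reduce to the case $\varrho\in C_0^\infty(\mathbb{R})$ by a routine mollify-and-truncate approximation (detailed in the last paragraph). For $\varrho\in C_0^\infty(\mathbb{R})$, Corollary~\ref{cor-key} applies with the given $k$ — indeed $f^{(k)}\ge 1>0$ when $k\ge 3$, and $f''\ge 1\ge 0$ when $k=2$ — and gives, for every $\varepsilon>0$,
$$
\int_{\mathbb{R}}\varphi'(f(t))\varrho(t)\,dt
\le C_1(k-1)\varepsilon^{-1}\|\varphi\|_\infty\|\varrho'\|_{L^1(\mathbb{R})}
+\|\varphi'\|_\infty\int_{\mathbb{R}}I_{\{|f'|\le 2\varepsilon\}}|\varrho(t)|\,dt .
$$
The key observation for the second term is that $(f')^{(k-1)}=f^{(k)}\ge 1$, so Corollary~\ref{sub-level}, applied to the function $f'$, to the integer $k-1\ge 1$, and to the finite positive Borel measure $|\varrho|\,dt$, yields $\int_{\mathbb{R}}I_{\{|f'|\le 2\varepsilon\}}|\varrho|\,dt\le 8e(k-1)\|\varrho\|_\infty(2\varepsilon)^{1/(k-1)}$.

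Next I would optimize in $\varepsilon$. Writing $\|\varrho'\|_{L^1(\mathbb{R})}=\|\varrho'\|_{\rm TV}$ for smooth $\varrho$ and absorbing $(2\varepsilon)^{1/(k-1)}\le 2\varepsilon^{1/(k-1)}$, the bound has the shape $A\varepsilon^{-1}+B\varepsilon^{1/(k-1)}$ with $A$ of order $(k-1)\|\varphi\|_\infty\|\varrho'\|_{\rm TV}$ and $B$ of order $(k-1)\|\varphi'\|_\infty\|\varrho\|_\infty$. Balancing the two terms — $\varepsilon^{k/(k-1)}\sim A/B$, since $1+\tfrac{1}{k-1}=\tfrac{k}{k-1}$ — produces a bound of order $A^{1/k}B^{1-1/k}$, and this is exactly where the exponent $1/k$ on $\|\varphi\|_\infty$ (and on $\|\varrho'\|_{\rm TV}$) and the complementary exponent $1-1/k$ on $\|\varphi'\|_\infty$ (and on $\|\varrho\|_\infty$) arise. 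The part requiring care is the constant bookkeeping: one checks $(k-1)^{1/k}(k-1)^{1-1/k}=k-1$, that the stray numerical powers such as $C_1^{1/k}$ and $(16e)^{1-1/k}$ stay below absolute constants, and that $k-1\le k$, so that the resulting constant is absolute and the $k$-dependence is linear, matching the claimed $Ck$.

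Finally, to remove the hypothesis $\varrho\in C_0^\infty(\mathbb{R})$, I would recall that every $\varrho\in BV(\mathbb{R})$ has a bounded good representative with $\|\varrho\|_\infty\le\tfrac12\|\varrho'\|_{\rm TV}$, and approximate it by $\varrho_{\delta,R}:=\chi_R\cdot(\varrho*\psi_\delta)\in C_0^\infty(\mathbb{R})$, where $\psi_\delta$ is a standard mollifier and $\chi_R$ a smooth cutoff equal to $1$ on $[-R,R]$, supported in $[-R-1,R+1]$, with $\|\chi_R'\|_\infty\le 2$. Then $\|\varrho_{\delta,R}\|_\infty\le\|\varrho\|_\infty$, $\varrho_{\delta,R}\to\varrho$ in $L^1(\mathbb{R})$, and $\limsup\|\varrho_{\delta,R}'\|_{L^1(\mathbb{R})}\le\|\varrho'\|_{\rm TV}$, using $\|(D_1\varrho)*\psi_\delta\|_{L^1(\mathbb{R})}\le\|D_1\varrho\|_{\rm TV}$ and $\int_{R\le|t|\le R+1}|\varrho*\psi_\delta|\,dt\to 0$. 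Since $\varphi'\circ f$ is bounded, $\int_{\mathbb{R}}\varphi'(f)\varrho_{\delta,R}\,dt\to\int_{\mathbb{R}}\varphi'(f)\varrho\,dt$, and passing to the limit in the $C_0^\infty$-estimate gives the theorem. None of this approximation presents a genuine obstacle; the only substantive point is the one isolated above, namely that interpolating the two elementary bounds reproduces precisely the van der Corput exponent $1/k$ with a constant linear in $k$.
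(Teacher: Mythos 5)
Your proposal is correct and follows essentially the same route as the paper: Corollary~\ref{cor-key} combined with Corollary~\ref{sub-level} applied to $f'$ (order $k-1$) with the measure $|\varrho|\,dt$, optimization in $\varepsilon$ to produce the exponents $1/k$ and $1-1/k$ with a constant linear in $k$, and a mollify-and-truncate approximation to pass from $C_0^\infty$ to general $BV$ densities. The only differences are cosmetic (your cutoff normalization and the explicit degenerate-case discussion), so nothing further is needed.
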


\begin{proof}
First, assume that $\varrho\in C^\infty_0(\mathbb{R})$.	
By Corollary \ref{cor-key},
$$
\int_{\mathbb{R}}\varphi'(f(t))\varrho(t)\, dt
\le 
C(k-1)\varepsilon^{-1}\|\varphi\|_\infty\|\varrho'\|_{L^1(\mathbb{R})}
+ \|\varphi'\|_\infty\int_{\mathbb{R}}I_{\{|f'|\le 2\varepsilon\}}|\varrho(t)|\, dt.
$$
Applying Corollary \ref{sub-level} with the measure $\mu(dt) = |\varrho(t)|\, dt$, we obtain 
$$
\int_{\mathbb{R}}I_{\{|f'|\le 2\varepsilon\}}|\varrho(t)|\, dt
\le 8e(k-1)\|\varrho\|_\infty\cdot (2\varepsilon)^{1/(k-1)}.
$$
Thus,
\begin{align*}
\int_{\mathbb{R}}&\varphi'(f(t))\varrho(t)\, dt
\\
&\le 
C(k-1)\varepsilon^{-1}\|\varphi\|_\infty\|\varrho'\|_{L^1(\mathbb{R})}
+ 8e2^{1/(k-1)}(k-1)\|\varphi'\|_\infty\|\varrho\|_\infty\cdot \varepsilon^{1/(k-1)},
\end{align*}
and by taking 
$$
\varepsilon =\Bigl( \frac{C\|\varphi\|_\infty\|\varrho'\|_{L^1(\mathbb{R})}}{8e2^{1/(k-1)}\|\varphi'\|_\infty\|\varrho\|_\infty}\Bigr)^{(k-1)/k},
$$
we obtain the estimate
\begin{align*}
\int_{\mathbb{R}}\varphi'(f(t))&\varrho(t)\, dt
\\
&\le 
2(k-1)(2C)^{1/k} (8e)^{1-1/k}
\|\varrho\|_\infty^{1-1/k}\|\varrho'\|_{L^1(\mathbb{R})}^{1/k}
\|\varphi\|_\infty^{1/k}\|\varphi'\|_\infty^{1-1/k}
\\
&\le
100Ck
\|\varrho\|_\infty^{1-1/k}\|\varrho'\|_{L^1(\mathbb{R})}^{1/k}
\|\varphi\|_\infty^{1/k}\|\varphi'\|_\infty^{1-1/k}.
\end{align*}

Let now $\varrho\in BV(\mathbb{R})$.
Let $\eta, \omega\in C_0^\infty(\mathbb{R})$ be a pair of non-negative functions such that $\eta(t)=1$ $\forall t\in[-1, 1]$,
$\eta(t)=0$ $\forall t\in\mathbb{R}\setminus[-2, 2]$,
$\eta(t)\in[0, 1]$ $\forall t\in \mathbb{R}$, and 
$\int_\mathbb{R}\omega(t)\, dt = 1$.
Let 
$$
\omega_m(s):=m\omega(ms)
\hbox{ and } 
\varrho_{n ,m}(t): =
\eta(n^{-1}t)\cdot \varrho*\omega_m(t)
\in C_0^\infty(\mathbb{R}).
$$
For every $\varphi \in C^\infty_b(\mathbb{R})$,
we have
\begin{align}\label{eq-mid}
\int_{\mathbb{R}}\varphi'(f(t))\varrho_{n, m}(t)&\, dt
\\
&\le 
100Ck \|\varrho_{n, m}\|_\infty^{1-1/k}\|\varrho_{n, m}'\|_{L^1(\mathbb{R})}^{1/k}
\|\varphi\|_\infty^{1/k}\|\varphi'\|_\infty^{1-1/k}.\nonumber
\end{align}
We note that 
$$
\|\varrho_{n, m}\|_\infty\le 
\|\varrho*\omega_m\|_\infty
\le \|\varrho\|_\infty \|\omega_m\|_{L^1(\mathbb{R})}
= \|\varrho\|_\infty
$$
and
$$
\|\varrho_{n, m}'\|_{L^1(\mathbb{R})}\le n^{-1}\|\eta'\|_\infty\|\varrho\|_{L^1(\mathbb{R})}
+\|\varrho'\|_{\rm TV}.
$$
In addition,
$$
\varrho_{n, m}\xrightarrow[n\to \infty]{L^1(\mathbb{R})}
\varrho*\omega_m
\hbox{ and }
\varrho*\omega_m 
\xrightarrow[m\to \infty]{L^1(\mathbb{R})}\varrho
$$
by the standard properties of convolution.
Now, passing to the limits in \eqref{eq-mid},
we obtain the announced bound.
\end{proof}

\begin{remark}
Now, to deduce Theorem~\ref{T-1d}, it is sufficient to observe that  
$$
\|\varrho\|_\infty\le \|\varrho'\|_{\rm TV}
$$
for any function $\varrho \in BV(\mathbb{R})$.
\end{remark}

\section{Polynomial images of $s$-concave measures}\label{sec-mult-dim}

We begin with the following key corollary of the one-dimensional estimate.

\begin{corollary}\label{cor-pol}	
There exists a universal constant $C>1$
such that for all
$d, n\in\mathbb{N}$, $f\in \mathcal{P}_d(\mathbb{R}^n)$, $\varrho\in BV(\mathbb{R}^n)$,
$\theta\in \mathbb{R}^n$ with $|\theta|=1$, and $\varphi \in C^\infty_b(\mathbb{R})$,
one has
$$
\int_{\mathbb{R}^n}\varphi'(f(x))\varrho(x)\, dx
\le 
Cd\|\varphi\|_\infty\varepsilon^{-1}\|D_\theta\varrho\|_{\rm TV}
+ \|\varphi'\|_\infty\int_{\mathbb{R}^n}I_{\{ |\partial_\theta f|\le 2\varepsilon\}} |\varrho(x)|\, dx.
$$
\end{corollary}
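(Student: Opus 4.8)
The plan is to deduce the estimate from its one-dimensional counterpart, Remark~\ref{rem-key}, by slicing $\mathbb{R}^n$ into lines parallel to $\theta$ and integrating over the orthogonal complement, and then to pass from smooth compactly supported densities to general $BV$ densities exactly as in the proof of Theorem~\ref{T-1d-reg}. I would begin by reducing to the case $\theta=e_1$: if $R$ is a rotation with $Re_1=\theta$, then replacing $f$, $\varrho$, $\theta$ by $f\circ R$, $\varrho\circ R$, $e_1$ leaves all three quantities in the claimed inequality unchanged, since $\int_{\mathbb{R}^n}\varphi'(f(x))\varrho(x)\,dx$ and the sublevel-set integral are invariant under the substitution $x\mapsto Rx$ (unit Jacobian), while $\|D_\theta\varrho\|_{\rm TV}$ is invariant because directional-derivative measures are transported by isometries. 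So from now on assume $\theta=e_1$ and write $x=(t,y)$ with $t\in\mathbb{R}$, $y\in\mathbb{R}^{n-1}$.

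Next I would treat the case $\varrho\in C_0^\infty(\mathbb{R}^n)$. For each fixed $y\in\mathbb{R}^{n-1}$ the slice $t\mapsto\varrho(t,y)$ lies in $C_0^\infty(\mathbb{R})$ and $t\mapsto f(t,y)$ lies in $\mathcal{P}_d(\mathbb{R})$ (possibly of smaller actual degree), so Remark~\ref{rem-key} applied on the line through $y$ gives
$$\int_{\mathbb{R}}\varphi'(f(t,y))\varrho(t,y)\,dt\le Cd\varepsilon^{-1}\|\varphi\|_\infty\int_{\mathbb{R}}|\partial_t\varrho(t,y)|\,dt+\|\varphi'\|_\infty\int_{\mathbb{R}}I_{\{|\partial_tf(t,y)|\le2\varepsilon\}}|\varrho(t,y)|\,dt.$$
Integrating this over $y$ and invoking Fubini's theorem, together with the elementary identity $\|D_{e_1}\varrho\|_{\rm TV}=\int_{\mathbb{R}^n}|\partial_{x_1}\varrho|\,dx$ valid for smooth compactly supported $\varrho$, yields the asserted inequality in this case.

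Finally, for a general $\varrho\in BV(\mathbb{R}^n)$ I would approximate as in the proof of Theorem~\ref{T-1d-reg}: set $\varrho_m:=\varrho*\omega_m$ for a mollifier $\omega_m$ on $\mathbb{R}^n$ and $\varrho_{n,m}(x):=\eta(n^{-1}x)\varrho_m(x)\in C_0^\infty(\mathbb{R}^n)$, where $\eta$ is a fixed smooth cutoff equal to $1$ near the origin. Then $\varrho_{n,m}\to\varrho_m$ in $L^1(\mathbb{R}^n)$ as $n\to\infty$ and $\varrho_m\to\varrho$ in $L^1(\mathbb{R}^n)$ as $m\to\infty$, while $\|D_{e_1}\varrho_m\|_{\rm TV}=\|(D_{e_1}\varrho)*\omega_m\|_{L^1(\mathbb{R}^n)}\le\|D_{e_1}\varrho\|_{\rm TV}$ and $\limsup_{n\to\infty}\|D_{e_1}\varrho_{n,m}\|_{\rm TV}\le\|D_{e_1}\varrho_m\|_{\rm TV}$, the cutoff contributing only a term of order $n^{-1}\|\varrho\|_{L^1(\mathbb{R}^n)}$. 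Applying the inequality already proved to each $\varrho_{n,m}$ and letting first $n\to\infty$ and then $m\to\infty$ completes the proof: the left-hand side and the sublevel-set integral converge because $\varphi'\circ f$ is bounded and $\varrho_{n,m}\to\varrho$ in $L^1$, and the total-variation term does not increase in the limit.

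I expect the slicing step to be routine, and the only point requiring any care is the limiting argument — in particular verifying that the cutoff does not inflate the total-variation term in the limit — but since this is handled in exactly the same way as in Theorem~\ref{T-1d-reg}, I do not anticipate a genuine obstacle.
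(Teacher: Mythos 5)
Your proposal is correct and follows essentially the same route as the paper: slice along lines parallel to $\theta$ (the paper writes $x=y+t\theta$ with $y\in\langle\theta\rangle^\perp$ rather than rotating to $e_1$, which is a cosmetic difference), apply the one-dimensional bound of Remark~\ref{rem-key} on each line, integrate via Fubini, and then pass to general $\varrho\in BV(\mathbb{R}^n)$ by the same mollification-plus-cutoff approximation, with the cutoff contributing only an $O(k^{-1}\|\varrho\|_{L^1})$ error in the directional-variation term. No gaps.
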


\begin{proof}
First, assume that $\varrho\in C_0^\infty(\mathbb{R}^n)$.
Let $\theta\in \mathbb{R}^n$ be a fixed unit vector. 
For any fixed $y\in \langle \theta\rangle^\bot$,
the function $g_y(t)= f(y+t\theta)$ is a polynomial of degree at most $d$.
Therefore, by Corollary~\ref{cor-key} and Remark~\ref{rem-key},
we have
\begin{align*}
\int_{\mathbb{R}^n}\varphi'(f(x))\varrho(&x)\, dx
=
\int_{\langle\theta \rangle^\bot} \int_\mathbb{R}
\varphi'(f(y+t\theta)) \varrho(y+t\theta)\, dt\, dy
\\
&\le Cd\|\varphi\|_\infty\varepsilon^{-1}
\int_{\langle\theta \rangle^\bot}
\int_{\mathbb{R}}|\partial_\theta\varrho(y+t\theta)|\, dt\, dy
\\
&+ \|\varphi'\|_\infty\int_{\langle\theta \rangle^\bot}\int_{\mathbb{R}}I_{\{|\partial_\theta f|\le 2\varepsilon\}}|\varrho(y+t\theta)|\, dt\, dy
\\
&= Cd\|\varphi\|_\infty\varepsilon^{-1}
\|\partial_\theta\varrho\|_{L^1(\mathbb{R}^n)}
+ \|\varphi'\|_\infty \int_{\mathbb{R}^n}I_{\{ |\partial_\theta f|\le 2\varepsilon\}} |\varrho(x)|\, dx.
\end{align*}

Let now $\varrho\in BV(\mathbb{R}^n)$.
Let $\eta, \omega\in C_0^\infty(\mathbb{R}^n)$ be a pair of non-negative functions such that $\eta(x)=1$ if $|x|\le 1$,
$\eta(x)=0$ if $|x|\ge 2$,
$\eta(x)\in[0, 1]$ $\forall x\in \mathbb{R}^n$, and 
$\int_{\mathbb{R}^n}\omega(x)\, dx = 1$.
Let $\omega_m(x):= m^n\omega(mx)$ and
$$
\varrho_{k ,m}(x): = \eta(k^{-1}x)\cdot \varrho*\omega_m(x)\in C_0^\infty(\mathbb{R}^n).
$$
By the standard properties of convolution, 
$$
\varrho_{k, m}\xrightarrow[k\to\infty]{L^1(\mathbb{R}^n)}\varrho*\omega_m
\hbox{ and } \varrho*\omega_m\xrightarrow[n\to\infty]{L^1(\mathbb{R}^n)} \varrho.
$$
In addition,
$$
\|\varrho_{k, m}\|_\infty\le \|\varrho\|_\infty
$$ 
and
\begin{align*}
\|\partial_\theta\varrho_{k, m}\|_{L^1(\mathbb{R}^ n)}
&= \|k^{-1}\partial_\theta\eta(k^{-1}\cdot)\varrho*\omega_m + \eta(k^{-1}\cdot)\varrho*(\partial_\theta\omega_m)\|_{L^1(\mathbb{R}^ n)}
\\
&\le k^{-1}\|\partial_\theta \eta\|_\infty\|\varrho\|_{L^1(\mathbb{R}^n)}+\|D_\theta\varrho\|_{\rm TV},	
\end{align*}
where the last bound follows from the
estimate 
\begin{align*}
\|\varrho*(\partial_\theta\omega_m)&\|_{L^1(\mathbb{R})}
=\sup\limits_{\substack{u\in C_0^\infty(\mathbb{R}^n)\\ \|u\|_\infty\le1}}\int_{\mathbb{R}^n}u(x)\int_{\mathbb{R}^n}\varrho(y)\partial_\theta\omega_m(x-y)\, dy\, dx
\\
& = 
\sup\limits_{\substack{u\in C_0^\infty(\mathbb{R}^n)\\ \|u\|_\infty\le1}}\int_{\mathbb{R}^n}u(x)\int_{\mathbb{R}^n}
\omega_m(x-y)\, D_\theta\varrho(dy)\, dx
\\
&=
\sup\limits_{\substack{u\in C_0^\infty(\mathbb{R}^n)\\ \|u\|_\infty\le1}}\int_{\mathbb{R}^n}\Bigl(\int_{\mathbb{R}^n}u(x)
\omega_m(x-y)\, dx\Bigr) \, D_\theta\varrho(dy)
\le \|D_\theta\varrho\|_{\rm TV}.
\end{align*}
For the function $\varrho_{k, m}$, we have already proved that
\begin{align*}
\int_{\mathbb{R}^n}\varphi'(f(x))\varrho_{k, m}(x)\, dx
&\le Cd\|\varphi\|_\infty\varepsilon^{-1}
(k^{-1}\|\partial_\theta \eta\|_\infty\|\varrho\|_{L^1(\mathbb{R}^n)}+\|D_\theta\varrho\|_{\rm TV})
\\
&+ \|\varphi'\|_\infty \int_{\mathbb{R}^n}I_{\{ |\partial_\theta f|\le 2\varepsilon\}} |\varrho_{k, m}(x)|\, dx.
\end{align*}
T aking the limits, first as $k\to \infty$
and then as $m\to \infty$, we obtain the desired estimate.
\end{proof}

The following corollary extends the sub-level estimate~\eqref{CW-est} 
to the class of all $s$-concave measures with $s \ge 0$.

\begin{corollary}\label{CW-cor}
There exists a constant $C>0$ such that
for every pair of integers $n, d\in \mathbb{N}$, for every $s\in [0, 1/n]$, for every $s$-concave measure $\mu$ on $\mathbb{R}^n$, and for every polynomial $f\in\mathcal{P}_d(\mathbb{R}^n)$, one has
\begin{equation}\label{CW-conc}
\|f\|_{L^2(\mu)}^{1/d}\mu(|f|\le \varepsilon)
\le C\min(d, 1/s) \varepsilon^{1/d}\quad \forall \varepsilon>0.
\end{equation}
\end{corollary}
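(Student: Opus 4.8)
The plan is to deduce \eqref{CW-conc} from the convex--body estimate \eqref{CW-est} by representing an $s$-concave measure as the $\mathbb{R}^n$-marginal of the uniform measure on a convex body in $\mathbb{R}^{n+m}$, where $m$ is of order $1/s-n$, and then keeping track of the resulting dimensional factor. First I would reduce to the case where the affine span $S(\mu)$ of $\supp(\mu)$ is all of $\mathbb{R}^n$: if $k:=\dim S(\mu)<n$, then since $s\le 1/n\le 1/k$ the measure $\mu$ is $s$-concave on $S(\mu)\cong\mathbb{R}^k$, the restriction of $f$ to $S(\mu)$ is again a polynomial of degree at most $d$, and neither $\|f\|_{L^2(\mu)}$ nor $\mu(|f|\le\varepsilon)$ changes, so it suffices to treat $k=n$. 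In that case Theorem~\ref{T-Bor} gives $\mu=\varrho\cdot\lambda_n$ with $\varrho$ a $\gamma$-concave function and $\gamma=\tfrac{s}{1-sn}\in[0,+\infty]$.

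Suppose first $s>0$, so $\gamma>0$; then $\varrho$ is bounded with bounded support (a $\gamma$-concave integrable function with $\gamma>0$ has this property), and since $\gamma$-concavity implies $(1/m)$-concavity for every integer $m\ge 1/\gamma$, I fix $m:=\max\{1,\lceil 1/\gamma\rceil\}$, so that $\varrho^{1/m}$ is concave on the convex set $\{\varrho>0\}$. With $v_m$ the volume of the Euclidean unit ball in $\mathbb{R}^m$, I put
$$
K:=\overline{\bigl\{(x,y)\in\mathbb{R}^n\times\mathbb{R}^m\colon |y|\le v_m^{-1/m}\,\varrho(x)^{1/m}\bigr\}}.
$$
Concavity of $\varrho^{1/m}$ makes $K$ a convex body, and Fubini's theorem shows that the marginal of the uniform measure on $K$ onto $\mathbb{R}^n$ has density $\varrho$; in particular $\lambda_{n+m}(K)=1$. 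Applying \eqref{CW-est} to $K$ and to the polynomial $g(x,y):=f(x)\in\mathcal{P}_d(\mathbb{R}^{n+m})$, and observing that the marginal identity gives $\|g\|_{L^2(K)}=\|f\|_{L^2(\mu)}$ and $\lambda_{n+m}\bigl((x,y)\in K\colon |g(x,y)|\le\varepsilon\bigr)=\mu(|f|\le\varepsilon)$, I obtain
$$
\|f\|_{L^2(\mu)}^{1/d}\,\mu(|f|\le\varepsilon)\le C\min(d,n+m)\,\varepsilon^{1/d}.
$$
Since $n\in\mathbb{N}$ and $1/\gamma=1/s-n$, we have $n+m=\max\{n+1,\lceil 1/s\rceil\}\le 1/s+1\le 2/s$ (because $1/s\ge n\ge 1$), whence $\min(d,n+m)\le\min(d,2/s)\le 2\min(d,1/s)$, which is the claimed bound up to the constant.

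It remains to treat $s=0$, that is, $\mu=e^{-V}\lambda_n$ with $V$ convex; here I would pass to a limit, since the construction above needs $\gamma>0$. For $j\in\mathbb{N}$ the function $(1-V/j)_+^j$ is $(1/j)$-concave in the sense of Definition~\ref{def-s-conc} --- the inequality there is only required where the function is positive, i.e.\ on the convex set $\{V<j\}$, where it follows from the convexity of $V$ --- it is dominated by $e^{-V}$, and it converges to $e^{-V}$ pointwise; let $\mu_j$ be the probability measure whose density is proportional to it. By Theorem~\ref{T-Bor} the measure $\mu_j$ is $\tfrac{1}{n+j}$-concave, so the case $s>0$ already established yields $\|f\|_{L^2(\mu_j)}^{1/d}\,\mu_j(|f|\le\varepsilon)\le 2C\min(d,n+j)\,\varepsilon^{1/d}=2Cd\,\varepsilon^{1/d}$ as soon as $j\ge d$. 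Since log-concave measures have finite polynomial moments, dominated convergence gives $\|f\|_{L^2(\mu_j)}\to\|f\|_{L^2(\mu)}$ and $\mu_j(|f|\le\varepsilon)\to\mu(|f|\le\varepsilon)$ as $j\to\infty$, and passing to the limit completes the proof.

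The conceptual core of the argument is the passage from \eqref{CW-est} to \eqref{CW-conc}, and the step I expect to need the most care is the bookkeeping that keeps the ambient dimension $n+m$ comparable to $1/s$, together with the standard but slightly delicate fact that a $\gamma$-concave integrable density with $\gamma>0$ is compactly supported (which is what guarantees that $K$ is a genuine convex body). Once these are in place, the verification of the marginal identity is a routine Fubini computation and the log-concave case is a routine approximation.
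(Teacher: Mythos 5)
Your argument is correct and, for $s>0$, is essentially the paper's own proof: reduce to the absolutely continuous case on $S(\mu)=\mathbb{R}^n$, use Theorem~\ref{T-Bor} to get a $\gamma$-concave density, lift $\mu$ to the uniform measure on a convex body $K\subset\mathbb{R}^{n+m}$ with $m\approx 1/\gamma$ via the concavity of $\varrho^{1/m}$, apply \eqref{CW-est} to $f$ viewed as a polynomial on $\mathbb{R}^{n+m}$, and note $n+m\le 1/s+1$; your bookkeeping $n+m=\max\{n+1,\lceil 1/s\rceil\}\le 2/s$ matches the paper's up to constants (the only cosmetic omission is the trivial Dirac/zero-dimensional case, which the paper dispatches in one line). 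The only genuine difference is at the endpoint $s=0$: the paper simply cites \cite{NSV} or the fact that log-concave measures are weak limits of marginals of uniform distributions on high-dimensional convex bodies, whereas you give a self-contained approximation, replacing $e^{-V}$ by the $\tfrac1{j}$-concave densities proportional to $(1-V/j)_+^{\,j}$, which by Theorem~\ref{T-Bor} are $\tfrac{1}{n+j}$-concave, applying the case $s>0$ (with $\min(d,n+j)=d$ for $j\ge d$), and passing to the limit by dominated convergence; this limiting argument is sound and makes the log-concave case independent of the external references.
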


\begin{proof}
If $\mu$ is a Dirac measure, then the estimate \eqref{CW-conc} holds with any constant $C \ge 1$.  
We now assume that $\mu$ is not a Dirac measure. 
Recall that $S(\mu)$ denotes the affine subspace spanned by the support of $\mu$.
In this case, $\dim S(\mu) \in \{1, \ldots, n\}$, and $\mu$ is absolutely continuous with respect to the Lebesgue measure on $S(\mu)$. 
Since the restriction of a polynomial of degree at most $d$ to any affine subspace is again a polynomial of degree at most $d$, it suffices to prove the estimate \eqref{CW-conc} for absolutely continuous measures $\mu$  (i.e., in the case $S(\mu) = \mathbb{R}^n$).  
Let $\varrho$ denote its density. By Theorem~\ref{T-Bor}, $\varrho$ is $\gamma$-concave with $\gamma = \frac{s}{1 - s n} \in [0, +\infty]$.
Assume first that $s > 0$, and take $m \in \mathbb{N}$ such that
$m-1\le 1/\gamma< m$.
Let
$$
K := \bigl\{ (x, y) \in \mathbb{R}^n \times \mathbb{R}^m \colon 
\varrho(x) > 0, \, 
|y| \le \kappa_m^{-1/m} \varrho(x)^{1/m} \bigr\},
$$
where $\kappa_m$ is the volume of the unit ball in $\mathbb{R}^m$.
Since $\varrho^\gamma$ is concave on the set $\{x \in \mathbb{R}^n \colon \varrho(x)>0\}$ and $1/(m\gamma) \le 1$,  
the function $\varrho^{1/m} = (\varrho^\gamma)^{1/(m\gamma)}$ is also concave
on $\{x \in \mathbb{R}^n \colon \varrho(x)>0\}$,  
which implies that the set $K$ is a convex subset of $\mathbb{R}^{n+m}$.  
In addition, the set $\{x \in \mathbb{R}^n \colon \varrho(x)>0\}$ is bounded due to \cite[Remark~2.2.7~(i)]{BGVV},  
which implies that the set $K$ is also bounded.
Moreover, for any bounded measurable function
$u$ on $\mathbb{R}^n$, we have
$$
\int_{\mathbb{R}^n}u(x)\varrho(x)\, dx =
\int_Ku(x)\, dxdy.
$$
In particular, $\lambda_{n+m}(K) = 1$, and
for any polynomial $f$ of degree at most $d$
on $\mathbb{R}^n$, we have
$$
\mu\bigl(x\in \mathbb{R}^n\colon |f|\le \varepsilon\bigr)
= \lambda_{n+m}\bigl((x, y)\in K\colon |f(x)|\le \varepsilon\bigr)
$$
and
$$
\int_{\mathbb{R}^n}|f(x)|^2\, \mu(dx) = \int_{K}|f(x)|^2\, dxdy.
$$
Applying the bound~\eqref{CW-est}, we arrive at the estimate
\begin{align*}
\|f\|_{L^2(\mu)}^{1/d}\mu(|f|\le \varepsilon)
&\le C\min(d, n+m) \varepsilon^{1/d}
\\
&\le C\min(d, 1/s +1) \varepsilon^{1/d}
\le 2C\min(d, 1/s) \varepsilon^{1/d}.
\end{align*}
For $s = 0$, see \cite{NSV}. Alternatively, one can use the fact that any log-concave measure can be obtained as a weak limit of marginals of high-dimensional uniform distributions on convex sets 
(see \cite[Remark 2.2.7 (ii)]{BGVV} or \cite[Theorem 9.1.6]{A-AGM-2}).
\end{proof}	

The following lemma is a dimension-dependent regularity result for polynomial images of high-dimensional $s$-concave measures.

\begin{lemma}\label{lem-dim}
Let $n \in \mathbb{N}$. There exists a constant $C(n)>0$, depending only on $n$, such that for any $s \in [0, 1/n]$, any absolutely continuous $s$-concave measure $\mu$ on $\mathbb{R}^n$, any $f \in \mathcal{P}_d(\mathbb{R}^n)$, and any $\varphi \in C_b^\infty(\mathbb{R})$, one has \begin{align}\label{eq-lem}
\|f-\mathbb{E}_\mu f\|_{L^2(\mu)}^{1/d}
\int_{\mathbb{R}^n}\varphi'(f(x))&\, \mu(dx)
\\
&\le 
C(n)\min\{d, 1/s\}\|\varphi\|_\infty^{1/d}\|\varphi'\|_\infty^{1-1/d},\nonumber
\end{align}
where $\displaystyle\mathbb{E}_\mu f:=\int_{\mathbb{R}^n} f\, d\mu$.
\end{lemma}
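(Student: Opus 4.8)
The plan is to combine Corollary~\ref{cor-pol} (one-dimensional reduction along a direction $\theta$) with Corollary~\ref{CW-cor} (the sublevel estimate for $s$-concave measures) and the machinery available for log-concave measures. First I would reduce to the isotropic case: by applying an affine change of variables, we may assume $\mu$ is isotropic, since both sides of \eqref{eq-lem} transform compatibly (the quantity $\|f-\mathbb{E}_\mu f\|_{L^2(\mu)}$ and $\int\varphi'(f)\,d\mu$ are affine-invariant in the right way, and the $s$-concavity class is preserved by affine maps). In the isotropic case I can then exploit the formula \eqref{Krug} for $\|D_\theta\varrho\|_{\rm TV}$, Klartag's pointwise density bound (Theorem~\ref{t-Klar}), the dimensional bound \eqref{isotr-est} on the isotropic constant, and the Poincar\'e inequality (Theorem~\ref{Poinc}).

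Next, I would apply Corollary~\ref{cor-pol} with $g:=f-\mathbb{E}_\mu f$ in place of $f$ (note $\partial_\theta g=\partial_\theta f$, so the sublevel term is unchanged) to get
$$
\int_{\mathbb{R}^n}\varphi'(g(x))\,\mu(dx)
\le Cd\,\|\varphi\|_\infty\,\varepsilon^{-1}\|D_\theta\varrho\|_{\rm TV}
+ \|\varphi'\|_\infty\,\mu\bigl(|\partial_\theta f|\le 2\varepsilon\bigr),
$$
where $\varrho$ is the density of $\mu$. Here I would \emph{average over} $\theta$ in the unit sphere (or more simply, pick a well-chosen $\theta$, e.g.\ one of the coordinate directions or a direction maximizing $\|\partial_\theta f\|_{L^2(\mu)}$) so that two things happen simultaneously: (i) the directional-derivative norm $\|D_\theta\varrho\|_{\rm TV}$ stays controlled by a constant $C(n)$ — this follows from \eqref{Krug} together with the density bounds for isotropic log-concave measures ($s$-concave with $s\ge0$ implies log-concave), since $\int_{\langle\theta\rangle^\perp}\sup_t\varrho(y+t\theta)\,dy$ is bounded dimensionally; and (ii) $\partial_\theta f$ has $L^2(\mu)$-norm comparable to $\|\nabla f\|_{L^2(\mu)}$, which by the Poincar\'e inequality (Theorem~\ref{Poinc}) is bounded below by $c\,n^{-1/2}\|f-\mathbb{E}_\mu f\|_{L^2(\mu)}$. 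Concretely, choosing $\theta$ to be a random direction and averaging gives $\mathbb{E}_\theta\|\partial_\theta f\|_{L^2(\mu)}^2=\tfrac1n\|\nabla f\|_{L^2(\mu)}^2\ge \tfrac{c}{n^2}\|f-\mathbb{E}_\mu f\|_{L^2(\mu)}^2$, so some $\theta$ does at least this well.

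For the sublevel term, since $\partial_\theta f$ is a polynomial of degree at most $d-1\le d$ and $\mu$ is $s$-concave, Corollary~\ref{CW-cor} applied to $\partial_\theta f$ yields
$$
\mu\bigl(|\partial_\theta f|\le 2\varepsilon\bigr)
\le C\min\{d,1/s\}\,\|\partial_\theta f\|_{L^2(\mu)}^{-1/d}(2\varepsilon)^{1/d}.
$$
Writing $A:=\|f-\mathbb{E}_\mu f\|_{L^2(\mu)}$ and using $\|\partial_\theta f\|_{L^2(\mu)}\ge c(n)\,A$ for the chosen $\theta$, both error terms become expressible in terms of $A$, $\varepsilon$, $\|\varphi\|_\infty$, $\|\varphi'\|_\infty$. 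Then I optimize in $\varepsilon>0$: balancing $Cd\,\|\varphi\|_\infty\varepsilon^{-1}C(n)$ against $C(n)\min\{d,1/s\}\,\|\varphi'\|_\infty A^{-1/d}\varepsilon^{1/d}$ gives the choice $\varepsilon\sim\bigl(\|\varphi\|_\infty/\|\varphi'\|_\infty\bigr)^{d/(d+1)}A^{1/(d+1)}(\cdots)$, and substituting back produces an upper bound of the form $C(n)\min\{d,1/s\}\,A^{-1/d}\,\|\varphi\|_\infty^{1/d}\|\varphi'\|_\infty^{1-1/d}$ after rearranging the exponents (one checks the powers of $\|\varphi\|_\infty$ and $\|\varphi'\|_\infty$ combine correctly to $1/d$ and $1-1/d$, and the stray $A$-power lands on the left as $A^{1/d}$).

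\textbf{Main obstacle.} The delicate point is step (i)–(ii): one needs a \emph{single} direction $\theta$ that simultaneously keeps $\|D_\theta\varrho\|_{\rm TV}$ bounded by a dimension-only constant \emph{and} keeps $\|\partial_\theta f\|_{L^2(\mu)}$ bounded below by $c(n)\|f-\mathbb{E}_\mu f\|_{L^2(\mu)}$. The averaging argument over the sphere handles the lower bound on $\|\partial_\theta f\|$ in expectation, but to get both properties at once one should verify that $\mathbb{E}_\theta\|D_\theta\varrho\|_{\rm TV}$ is also dimensionally bounded (which follows from integrating \eqref{Krug} over $\theta$ and using the isotropic density bounds), and then a union/Markov-type argument produces a $\theta$ that is simultaneously good for both. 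Getting the constant $C(n)$ to be finite (it need not be dimension-free at this stage — that is deferred to the localization step) is routine once the isotropic normalization and Theorems~\ref{t-Klar},~\ref{Poinc},~and the identity \eqref{Krug} are in hand.
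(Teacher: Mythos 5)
Your overall route is the same as the paper's (reduce to the isotropic case; apply Corollary~\ref{cor-pol} in a fixed direction; bound $\|D_\theta\varrho\|_{\rm TV}$ via \eqref{Krug}, Theorem~\ref{t-Klar} and \eqref{isotr-est}; control the sublevel term by Corollary~\ref{CW-cor}; optimize in $\varepsilon$; and bring in the Poincar\'e inequality to pass from $\partial_\theta f$ to $f-\mathbb{E}_\mu f$). Incidentally, what you flag as the ``main obstacle'' is not one: in the isotropic case Step~2 of the argument gives $\|D_\theta\varrho\|_{\rm TV}\le C(n)$ \emph{uniformly in} $\theta$, so there is no tension between the two requirements on $\theta$; one can simply take the direction maximizing $\|\partial_\theta f\|_{L^2(\mu)}$ (the paper equivalently integrates the $2d$-th power of the per-direction estimate over the sphere and then applies Theorem~\ref{Poinc}).

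The genuine gap is in your exponent bookkeeping at the optimization step. You apply Corollary~\ref{CW-cor} to $\partial_\theta f$ with the exponent $1/d$, i.e.
$\mu(|\partial_\theta f|\le 2\varepsilon)\le C\min\{d,1/s\}\,\|\partial_\theta f\|_{L^2(\mu)}^{-1/d}(2\varepsilon)^{1/d}$,
and then balance $a\varepsilon^{-1}$ against $b\,\varepsilon^{1/d}$. The minimum of $a\varepsilon^{-1}+b\varepsilon^{1/d}$ is of order $a^{1/(d+1)}b^{d/(d+1)}$, so this produces a bound of the form
$\|\partial_\theta f\|_{L^2(\mu)}^{1/(d+1)}\int\varphi'(f)\,d\mu\le C(n)\min\{d,1/s\}\,\|\varphi\|_\infty^{1/(d+1)}\|\varphi'\|_\infty^{1-1/(d+1)}$,
i.e.\ regularity index $1/(d+1)$, not the claimed $1/d$; your assertion that the powers ``combine correctly to $1/d$ and $1-1/d$'' is arithmetically false with this choice, and the weaker index $1/(d+1)$ would not suffice for the sharp Carbery--Wright exponent downstream. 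The fix is the one the paper uses: exploit that $\partial_\theta f$ has degree at most $d-1$ and apply Corollary~\ref{CW-cor} with parameter $d-1$, so the sublevel term scales as $\varepsilon^{1/(d-1)}$; balancing $\varepsilon^{-1}$ against $\varepsilon^{1/(d-1)}$ then yields exactly $\|\varphi\|_\infty^{1/d}\|\varphi'\|_\infty^{1-1/d}$ together with $\|\partial_\theta f\|_{L^2(\mu)}^{-1/d}$ and $\|D_\theta\varrho\|_{\rm TV}^{1/d}$. (A minor further point: if you feed $g=f-\mathbb{E}_\mu f$ into Corollary~\ref{cor-pol}, you bound $\int\varphi'(g)\,d\mu$ rather than $\int\varphi'(f)\,d\mu$; either work with $f$ directly, as the paper does, or replace $\varphi$ by the shifted test function $\psi(t)=\varphi(t+\mathbb{E}_\mu f)$, which has the same norms.)
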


\begin{proof}
	
Let $\varrho$ be the density of the measure $\mu$. 
For any non-degenerate linear transformation $T\colon\mathbb{R}^n\to\mathbb{R}^n$ and any $h\in\mathbb{R}^n$, the function 
$|\det T|\cdot\varrho(Tx+h)$ is again the density of an $s$-concave measure, $f(Tx+h)$ is also a polynomial of degree at most $d$, and
$$
\int_{\mathbb{R}^n}u(f(x))\varrho(x)\, dx
=\int_{\mathbb{R}^n}u\bigl(f(Tx+h)\bigr)
|\det T| \cdot\varrho(Tx+h)\, dx
$$
for any bounded measurable function $u$. Therefore, it suffices to prove the estimate~\eqref{eq-lem} only for some affine image of the measure $\mu$.
Since $s\ge0$, the measure $\mu$ is also log-concave, and there exist a non-degenerate linear mapping 
$T\colon\mathbb{R}^n\to\mathbb{R}^n$ 
and a shift $h\in\mathbb{R}^n$ such that 
$|\det T|\cdot\varrho(Tx+h)$
is the density of an isotropic measure (see \cite[Section~2.3.3]{BGVV} or \cite[Section~10.2]{A-AGM-1}). Therefore, without loss of generality, we may assume that the measure $\mu$ in~\eqref{eq-lem} is isotropic.
\vskip .05in
\noindent	
{\bf Step 1.} Let now $\mu$ be isotropic.
By Corollary~\ref{cor-pol},
we know that
$$
\int_{\mathbb{R}^n}\varphi'(f(x))\, \mu(dx)
\le 
C_1d\varepsilon^{-1}\|\varphi\|_\infty\|D_\theta\varrho\|_{\rm TV}
+ \|\varphi'\|_\infty\mu\bigl(|\partial_\theta f|\le 2\varepsilon\bigr).
$$
Suppose $\|\partial_\theta f\|_{L^2(\mu)}>0$. 
As $\partial_\theta f$ is a polynomial of degree at most $d-1$, 
Corollary~\ref{CW-cor} yields
\begin{align*}
\int_{\mathbb{R}^n}&\varphi'(f(x))\, \mu(dx)
\\
&\le 
C_1d\varepsilon^{-1}\|\varphi\|_\infty\|D_\theta\varrho\|_{\rm TV}
+ C_2\min(d, 1/s)\varepsilon^{1/(d-1)} \|\varphi'\|_\infty  \|\partial_\theta f\|_{L^2(\mu)}^{-1/(d-1)}.
\end{align*} 
By taking
$$
\varepsilon = \bigl(d\|\varphi\|_\infty\|D_\theta\varrho\|_{\rm TV} (\min(d, 1/s))^{-1} \|\varphi'\|_\infty^{-1}  \|\partial_\theta f\|_{L^2(\mu)}^{1/(d-1)}\bigr)^{(d-1)/d},
$$
we obtain
\begin{align*}
\int_{\mathbb{R}^n}&\varphi'(f(x))\, \mu(dx)
\\
&\le 
(C_1+C_2)d^{1/d}(\min(d, 1/s))^{1-1/d}
\|\varphi\|_\infty^{1/d}\|\varphi'\|_\infty^{1-1/d}
\|D_\theta\varrho\|_{\rm TV}^{1/d}  \|\partial_\theta f\|_{L^2(\mu)}^{-1/d}.
\end{align*}
Noting that $d^{1/d}\le 2$ and that $\min(d,1/s)\ge 1$, we in fact have
$$
\|\partial_\theta f\|_{L^2(\mu)}^{1/d}\int_{\mathbb{R}^n}\varphi'(f(x))\, \mu(dx)
\le 
C_3\min(d, 1/s)
\|\varphi\|_\infty^{1/d}\|\varphi'\|_\infty^{1-1/d}
\|D_\theta\varrho\|_{\rm TV}^{1/d}
$$
and this estimate remains valid also in the case  	 
$\|\partial_\theta f\|_{L^2(\mu)}=0$.
\vskip .05in
\noindent	
{\bf Step 2.}
By \eqref{Krug}, the norm of the derivative is given by$$
\|D_\theta \varrho\|_{\rm TV} = 
2\cdot \int_{\langle\theta\rangle^\bot}
\sup_{t\in\mathbb{R}}\varrho(y+t\theta)\, dy.
$$ 
Since we have assumed that $\mu$ is isotropic, 
by Theorem~\ref{t-Klar} we obtain
$$
\varrho(x)\le \varrho(0)e^{C_4n-C_5|x|},
$$
which implies
\begin{align*}
\|D_\theta \varrho\|_{\rm TV} 
&\le
2e^{C_4n}\varrho(0)\cdot 
\int_{\langle\theta\rangle^\bot}
e^{-C_5|y|}\, dy
\\
&=C_6(n)\varrho(0) \le 
C_6(n)\|\varrho\|_\infty =
C_6(n)L(\mu)^n\le C_7(n),
\end{align*}
where we have used the dimensional estimate \eqref{isotr-est} for the isotropic constant $L(\mu)$.
Thus, we arrive at the bound
$$
\|\partial_\theta f\|_{L^2(\mu)}^{1/d}\int_{\mathbb{R}^n}\varphi'(f(x))\, \mu(dx)
\le 
C_8(n)\min(d, 1/s)
\|\varphi\|_\infty^{1/d}\|\varphi'\|_\infty^{1-1/d}.
$$
\vskip .05in
\noindent	
{\bf Step 3.}
Without loss of generality, we may assume that
$$
\int_{\mathbb{R}^n}\varphi'(f(x))\, \mu(dx)>0.
$$
Let $e \in \mathbb{R}^n$ be any unit vector, for instance, $e = (1, 0, \dots, 0)$.
Integrating the $2d$-th power of the above estimate  
\begin{align*}
\Bigl(\int_{\mathbb{R}^n}|\langle\nabla f, \theta\rangle|^2\, d\mu\Bigr)
&\Bigl(\int_{\mathbb{R}^n}\varphi'(f(x))\, \mu(dx)\Bigr)^{2d}
\\
&\le \Bigl(C_8(n)\min(d, 1/s)
\|\varphi\|_\infty^{1/d}\|\varphi'\|_\infty^{1-1/d}\Bigr)^{2d}
\end{align*}
over the surface measure on the unit sphere of $\mathbb{R}^n$, we obtain
\begin{align*}
&\Bigl(\int_{\mathbb{R}^n} |\nabla f|^2\, d\mu\Bigr)
\Bigl(\int_{S^{n-1}}|\langle e, \theta\rangle|^2\, \sigma_{n-1}(d\theta)\Bigr)
\Bigl(\int_{\mathbb{R}^n}\varphi'(f(x))\, \mu(dx)\Bigr)^{2d}
\\
&=	
\Bigl(\int_{S^{n-1}}\Bigl(\int_{\mathbb{R}^n}|\langle\nabla f, \theta\rangle|^2\, d\mu\Bigr)\, \sigma_{n-1}(d\theta)\Bigr)
\Bigl(\int_{\mathbb{R}^n}\varphi'(f(x))\, \mu(dx)\Bigr)^{2d}
\\
&\le 
n\kappa_n\Bigl(C_8(n)\min(d, 1/s)
\|\varphi\|_\infty^{1/d}\|\varphi'\|_\infty^{1-1/d}\Bigr)^{2d}.
\end{align*}
Finally, applying the Poincar\'e inequality 
for isotropic log-concave measures from Theorem~\ref{Poinc}, 
we arrive at
\begin{align*}
\Bigl(\int_{\mathbb{R}^n}\bigl(f-\mathbb{E}_\mu f\bigr)^2\, d\mu\Bigr)
&\Bigl(\int_{\mathbb{R}^n}\varphi'(f(x))\, \mu(dx)\Bigr)^{2d}
\\
&\le C_9(n)\Bigl(C_8(n)\min(d, 1/s)
\|\varphi\|_\infty^{1/d}\|\varphi'\|_\infty^{1-1/d}\Bigr)^{2d},
\end{align*}
which is equivalent to the estimate stated in the lemma.
\end{proof}

Finally, applying the localization technique from Theorem~\ref{loc-lem}, we deduce a dimension-independent bound from the dimension-dependent result of Lemma~\ref{lem-dim}.

\begin{theorem}\label{CW-T}
There exists an absolute constant $C>0$ such that
for every $s\ge 0$, every $d, n\in\mathbb{N}$, $n\le 1/s$, every $s$-concave measure $\mu$ on $\mathbb{R}^n$,
every $f\in \mathcal{P}_d(\mathbb{R}^n)$, and every $\varphi\in C_b^\infty(\mathbb{R})$, one has
$$
\|f-\mathbb{E}_\mu f\|_{L^2(\mu)}^{1/d}
\int_{\mathbb{R}^n}\varphi'(f(x))\, \mu(dx)
\le 
C\min\{d, 1/s\}\|\varphi\|_\infty^{1/d}\|\varphi'\|_\infty^{1-1/d},
$$
where $\displaystyle\mathbb{E}_\mu f:=\int_{\mathbb{R}^n} f\, d\mu$.	

\noindent
In other words, for every 
$f\in \mathcal{P}_d(\mathbb{R}^n)$
with
$\|f-\mathbb{E}_\mu f\|_{L^2(\mu)}>0,$
one has
$$
\sigma(\mu\circ f^{-1}, t)
\le 
\frac{C\min\{d, 1/s\}} {\|f-\mathbb{E}_\mu f\|_{L^2(\mu)}^{1/d}}\cdot t^{1/d}\quad \forall t>0.
$$
\end{theorem}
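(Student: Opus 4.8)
The plan is to upgrade the dimension-dependent bound of Lemma~\ref{lem-dim} to a dimension-free one by means of the localization principle of Theorem~\ref{loc-lem}. Write $M:=\|\varphi\|_\infty^{1/d}\|\varphi'\|_\infty^{1-1/d}$ and let $C_0:=\max\{C(1),C(2),C(3)\}$, where $C(k)$ denotes the constant from Lemma~\ref{lem-dim}; I claim the theorem holds with $C=C_0$. If $\|f-\mathbb{E}_\mu f\|_{L^2(\mu)}=0$ the left-hand side of the asserted inequality vanishes and there is nothing to prove, so put $\delta:=\|f-\mathbb{E}_\mu f\|_{L^2(\mu)}>0$. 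If $n\le 3$ the assertion is exactly Lemma~\ref{lem-dim} with $C(n)\le C_0$, so from now on assume $n\ge 4$. Finally, a routine approximation reduces matters to the case where $\mu$ is supported on a closed ball $K=\overline{B}_R$: for $s>0$ the support of $\mu$ is already bounded (by \cite[Remark~2.2.7~(i)]{BGVV}, as in the proof of Corollary~\ref{CW-cor}), and for $s=0$ one applies the argument below to the (still $s$-concave) normalized restrictions $\mu_R:=\mu|_{\overline{B}_R}/\mu(\overline{B}_R)$ and then lets $R\to\infty$, using that $\int\varphi'(f)\,d\mu_R\to\int\varphi'(f)\,d\mu$ and $\|f-\mathbb{E}_{\mu_R}f\|_{L^2(\mu_R)}\to\delta$.

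Set $a:=\mathbb{E}_\mu f$ and introduce the three continuous functions
$$
u_1:=f-a,\qquad u_2:=a-f,\qquad u_3:=(f-a)^2-\delta^2
$$
on $K$. Consider the functional $F(\nu):=\int_K\varphi'(f)\,d\nu$ on the space $P(K)$ of Borel probability measures on $K$; it is linear, hence convex and weakly continuous. Restrict attention to the class $P^s_{u_1,u_2,u_3}(K)$ of $s$-concave measures $\nu$ on $K$ satisfying $\int_K u_j\,d\nu\ge 0$ for $j=1,2,3$. The constraints coming from $u_1$ and $u_2$ force $\mathbb{E}_\nu f=a$, after which the one coming from $u_3$ forces $\|f-\mathbb{E}_\nu f\|_{L^2(\nu)}^2=\int_K(f-a)^2\,d\nu\ge\delta^2$; note that $\mu$ itself belongs to $P^s_{u_1,u_2,u_3}(K)$. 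Since $p=3\le n$ and $0\le s\le 1/n\le\tfrac14=\tfrac1{p+1}$, Theorem~\ref{loc-lem} applies and shows that the supremum of $F$ over $P^s_{u_1,u_2,u_3}(K)$ is attained at an $s$-concave measure $\nu^*$ with $\dim S(\nu^*)\le 3$.

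The maximizer $\nu^*$ cannot be a point mass, for at such a point the constraints would read $f=a$ and $(f-a)^2\ge\delta^2>0$ at once; hence $k:=\dim S(\nu^*)\in\{1,2,3\}$, and by Theorem~\ref{T-Bor} (together with the absolute continuity of log-concave measures on their affine span in the case $s=0$) the measure $\nu^*$ is absolutely continuous on $S(\nu^*)\cong\mathbb{R}^k$. Identifying $S(\nu^*)$ with $\mathbb{R}^k$, the restriction of $f$ is a polynomial of degree at most $d$, and $\nu^*$ is $s$-concave with $s\le 1/n\le 1/k$, so Lemma~\ref{lem-dim} gives
$$
\delta^{1/d}\,F(\nu^*)\le\|f-\mathbb{E}_{\nu^*}f\|_{L^2(\nu^*)}^{1/d}\int\varphi'(f)\,d\nu^*\le C(k)\min\{d,1/s\}\,M\le C_0\min\{d,1/s\}\,M.
$$
Since $\mu\in P^s_{u_1,u_2,u_3}(K)$ we conclude that $\int_{\mathbb{R}^n}\varphi'(f)\,d\mu=F(\mu)\le F(\nu^*)\le C_0\min\{d,1/s\}\,M\,\delta^{-1/d}$, which is the first displayed inequality of the theorem with $C=C_0$ (recall $\delta=\|f-\mathbb{E}_\mu f\|_{L^2(\mu)}$); the reformulation in terms of $\sigma(\mu\circ f^{-1},\cdot)$ follows by taking the supremum over admissible $\varphi$.

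I expect the only genuinely delicate point to be the design of the constraint set. The condition one actually wishes to impose, namely $\|f-\mathbb{E}_\nu f\|_{L^2(\nu)}\ge\delta$, is concave but not linear in $\nu$, so it is not of the form allowed in Theorem~\ref{loc-lem}; the device of first freezing the mean via $u_1,u_2$ and then bounding the second moment via $u_3$ recasts it as three linear constraints, at the harmless cost of $p=3$ and of treating $n\le 3$ by hand. The remaining points are routine: verifying the hypotheses of the localization lemma, excluding a degenerate maximizer, and the compact-support approximation in the log-concave case.
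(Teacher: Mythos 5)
Your proposal is correct and follows essentially the same route as the paper: reduce to compactly supported measures, apply the Fradelizi--Gu\'edon localization (Theorem~\ref{loc-lem}) with the three linear constraints $f-a\ge0$, $a-f\ge0$, $(f-a)^2-\delta^2\ge0$ to the linear functional $\nu\mapsto\int\varphi'(f)\,d\nu$, and invoke the dimension-dependent Lemma~\ref{lem-dim} on the at most three-dimensional affine span of the extremal measure. The only (harmless) deviation is the treatment of $s=0$, where you truncate to balls and pass to the limit instead of the paper's approximation of log-concave measures by marginals of uniform distributions on convex bodies; both work since Theorem~\ref{loc-lem} and Lemma~\ref{lem-dim} allow $s=0$, and your explicit exclusion of Dirac maximizers and restriction to $S(\nu^*)$ only make explicit what the paper leaves implicit.
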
	

\begin{proof}	
We prove the announced bound with $C=\max\{C(1), C(2), C(3)\}$,
where the constants $C(1)$, $C(2)$, and $C(3)$ 
are from  Lemma \ref{lem-dim}.
Without loss of generality, we may assume that
$$
\|f-\mathbb{E}f\|_{L^2(\mu)}>0.
$$	
In particular, $\mu$ is not a Dirac measure.
If $\dim S(\mu)\in\{1, 2, 3\}$, then, by Lemma~\ref{lem-dim},
\begin{align*}
\|f-\mathbb{E}f\|_{L^2(\mu)}^{1/d}
\int_{\mathbb{R}^n}&\varphi'(f(x))\, \mu(dx)
\\
&\le 
\max\{C(1), C(2), C(3)\}\min\{d, 1/s\}\|\varphi\|_\infty^{1/d}\|\varphi'\|_\infty^{1-1/d}.
\end{align*}
Thus, it suffices to prove the estimate only for $s$-concave measures $\mu$ satisfying $\dim S(\mu)\ge 4$.  
Therefore, without loss of generality, we assume that $s \in [0, 1/4]$ and $n \in [4, 1/s]$.
We now fix a number $a>0$,
a polynomial $g\in \mathcal{P}_d(\mathbb{R}^n)$, a function $\psi\in C_b^\infty(\mathbb{R})$, 
and a convex compact subset $K\subset\mathbb{R}^n$.
We consider the set $P^s_{u_1, u_2, u_3}(K)$
of all $s$-concave measures $\nu$ supported in $K$ such that
$$
\int_{\mathbb{R}^n} u_1\, d\nu\ge 0, \quad
\int_{\mathbb{R}^n} u_2\, d\nu\ge 0, \quad
\int_{\mathbb{R}^n} u_3\, d\nu\ge 0,
$$
where $u_1=g$, $u_2=-g$, $u_3 = g^2-a$,
i.e.,
$$
\mathbb{E}g=\int_{\mathbb{R}^n}g\, d\nu = 0
\hbox{ and }
\int_{\mathbb{R}^n}(g-\mathbb{E}g)^2\, d\nu = 
\int_{\mathbb{R}^n}g^2\, d\nu \ge a\quad\forall
\nu \in P^s_{u_1, u_2, u_3}(K).
$$
Let
$$
F(\nu) = \int_{\mathbb{R}^n}\psi'(g)\, d\nu.
$$
This functional is linear and continuous in the weak topology on the space of measures supported in $K$.
By the localization lemma (Theorem~\ref{loc-lem})
and Lemma \ref{lem-dim},
\begin{align*}
\sup\limits_{\nu\in P_{u_1, u_2, u_3}} F&(\nu)
\le \sup\limits_{\substack{\nu\in P_{u_1, u_2, u_3} \\ \dim S(\nu)\le 3}} F(\nu)
\\
&\le \max\{C(1), C(2), C(3)\}a^{-1/2d}
\min\{d, 1/s\}\|\psi\|_\infty^{1/d}\|\psi'\|_\infty^{1-1/d}.	
\end{align*}
Let now $\mu$ be any $s$-concave measure on 
$\mathbb{R}^n$ with $s\in(0, 1/4]$ and $n\ge 4$,
let $f\in \mathcal{P}_d(\mathbb{R}^n)$, and let $\varphi\in C_b^\infty(\mathbb{R})$.
Since we have assumed that $s>0$, the convex set
$\supp(\mu)$ is compact (see \cite{Bobk07} or \cite[Remark~2.2.7~(i)]{BGVV}). Let $K = \supp(\mu)$,
$g = f - \mathbb{E}_\mu f$,
$a = \|f - \mathbb{E}_\mu f\|_{L^2(\mu)}^2$,
$\psi(t) = \varphi(t + \mathbb{E}_\mu f)$.
Then 
$\mu\in P^s_{u_1, u_2, u_3}(K)$
with $u_1=g$, $u_2=-g$, $u_3 = g^2-a$,
and, as we have already shown above,
\begin{align*}
\int_{\mathbb{R}^n}&\varphi'(f)\, d\mu
=
\int_{\mathbb{R}^n}\psi'(g)\, d\mu=
F(\mu)
\\
&\le 
\max\{C(1), C(2), C(3)\}a^{-1/2d}
\min\{d, 1/s\}\|\psi\|_\infty^{1/d}\|\psi'\|_\infty^{1-1/d}
\\
&= 
\max\{C(1), C(2), C(3)\}\|f - \mathbb{E}f\|_{L^2(\mu)}^{-1/d}
\min\{d, 1/s\}\|\varphi\|_\infty^{1/d}\|\varphi'\|_\infty^{1-1/d},
\end{align*}
as announced. 

Finally, the estimate in the log-concave case $s = 0$ follows from the estimate already obtained for $s>0$ and from \cite[Remark~2.2.7~(ii)]{BGVV}.  
This completes the proof of the theorem.
\end{proof}

\begin{remark}
Theorem~\ref{T-main-reg} now follows from the theorem above, since the uniform distribution on a convex set is a $1/n$-concave measure by the Brunn--Minkowski inequality.
\end{remark}

\begin{remark}\label{rem-CW-T}
In probabilistic terms, the above theorem can be stated as follows:	

\noindent	
{\it	
There exists an absolute constant $C>0$ such that,	
for every $s\ge 0$, every $d, n\in\mathbb{N}$, $n\le 1/s$, every $n$-dimensional random vector
$X$ with an $s$-concave distribution,
every $f\in \mathcal{P}_d(\mathbb{R}^n)$, and every $\varphi\in C_b^\infty(\mathbb{R})$, one has
$$
\bigl(\mathbb{D}(f(X))\bigr)^{1/(2d)}
\mathbb{E}\bigl(\varphi'(f(X))\bigr)
\le 
C\min\{d, 1/s\}\|\varphi\|_\infty^{1/d}\|\varphi'\|_\infty^{1-1/d},
$$
where $\mathbb{E}$ and $\mathbb{D}$ denote expectation and variance of random variables, respectively.	
}
\end{remark}

\section{Polynomial images of the uniform distribution on the unit cube}\label{sec-cube}

Throughout this section, 
let $Q^n:= [-\frac{1}{2}, \frac{1}{2}]^n$
and let $\mu_{Q^n}$ denote the uniform probability measure on $Q^n$.

\subsection{General polynomials of fixed total degree}

We start with a regularity counterpart of 
the estimate \eqref{Th-CCW-1}.

\begin{proposition}
There exists an absolute constant 
$C>1$ such that
for every $n,k,d\in\mathbb{N}$, every choice of $k_1,\dots,k_n\in\mathbb{N}\cup\{0\}$ with $k=k_1+\cdots+k_n$, and every
$f\in\mathcal{P}_d(\mathbb{R}^n)$ satisfy
$$
\Bigl|\frac{\partial^k}{\partial x_1^{k_1}\ldots \partial x_n^{k_n}}f(x)\Bigr|\ge 1\quad \forall x\in Q^n,
$$
one has
$$
\sigma(\mu_{Q^n}\circ f^{-1}, t)
\le 
Cdk t^{1/k}.
$$
\end{proposition}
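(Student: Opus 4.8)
The plan is to iterate the one-dimensional bound of Corollary~\ref{cor-pol} along the coordinate directions occurring in the prescribed mixed derivative, using Theorem~\ref{T-meas} to convert each sublevel-set term into a regularity estimate for a lower-degree polynomial, and then to choose the auxiliary scale parameters in a single geometric progression so that one application of the AM--GM inequality produces the factor $k\,t^{1/k}$. To set this up, note first that if $k>d$ the hypothesis is vacuous, so we may assume $k\le d$; and since partial derivatives commute, we may fix an ordering $j_1,\dots,j_k\in\{1,\dots,n\}$ in which each index $j$ occurs exactly $k_j$ times and put $f_0:=f$ and $f_i:=\partial_{e_{j_i}}f_{i-1}$ for $1\le i\le k$. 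Then each $f_i$ belongs to $\mathcal{P}_d(\mathbb{R}^n)$ (its degree is at most $d-i$), and by hypothesis $|f_k(x)|\ge1$ for all $x\in Q^n$. The density of $\mu_{Q^n}$ is the indicator $I_{Q^n}\in BV(\mathbb{R}^n)$, and for any coordinate unit vector $e_j$ the measure $D_{e_j}I_{Q^n}$ is, up to sign, the difference of the $(n-1)$-dimensional Lebesgue measures on the two faces of $Q^n$ orthogonal to $e_j$, so that
$$
\|D_{e_j}I_{Q^n}\|_{\rm TV}=2
$$
\emph{independently of the dimension $n$} --- this is what ultimately keeps the bound dimension-free.

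Let $C_1>1$ denote the constant of Corollary~\ref{cor-pol}; the next step is the recursion. Fix $i\in\{0,\dots,k-1\}$, a number $s>0$, and a test function $\varphi\in C_b^\infty(\mathbb{R})$ with $\|\varphi\|_\infty\le s$ and $\|\varphi'\|_\infty\le1$. Applying Corollary~\ref{cor-pol} to $f_i$ with $\theta=e_{j_{i+1}}$ and $\varrho=I_{Q^n}$, and then Theorem~\ref{T-meas} to the probability measure $\mu_{Q^n}\circ f_{i+1}^{-1}$ on $\mathbb{R}$ and the set $[-2\varepsilon,2\varepsilon]$ (whose Lebesgue measure is $4\varepsilon$), one gets, for every $\varepsilon>0$,
$$
\int_{Q^n}\varphi'(f_i)\,dx
\le 2C_1 d\,s\,\varepsilon^{-1}+\mu_{Q^n}\bigl(|f_{i+1}|\le2\varepsilon\bigr)
\le 2C_1 d\,s\,\varepsilon^{-1}+\sigma\bigl(\mu_{Q^n}\circ f_{i+1}^{-1},4\varepsilon\bigr);
$$
taking the supremum over such $\varphi$ gives $\sigma(\mu_{Q^n}\circ f_i^{-1},s)\le 2C_1 d\,s\,\varepsilon^{-1}+\sigma(\mu_{Q^n}\circ f_{i+1}^{-1},4\varepsilon)$. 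For the last index $i=k-1$ I would instead take $\varepsilon=\varepsilon_{k-1}:=\tfrac14$; since $|f_k|\ge1>2\varepsilon_{k-1}$ on $Q^n$, the sublevel term vanishes and $\sigma(\mu_{Q^n}\circ f_{k-1}^{-1},s)\le 8C_1 d\,s$.

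Composing these $k$ inequalities with free parameters $\varepsilon_0,\dots,\varepsilon_{k-2}>0$ and $\varepsilon_{k-1}=\tfrac14$, and telescoping the scales, one obtains
$$
\sigma\bigl(\mu_{Q^n}\circ f^{-1},t\bigr)
\le 2C_1 d\Bigl(\frac{t}{\varepsilon_0}+\sum_{i=1}^{k-1}\frac{4\,\varepsilon_{i-1}}{\varepsilon_i}\Bigr).
$$
The product of the $k$ summands inside the parentheses equals $4^{k-1}t/\varepsilon_{k-1}=4^{k}t$, independent of the $\varepsilon_i$, so by AM--GM the parenthesis is at least $k(4^{k}t)^{1/k}=4k\,t^{1/k}$, with equality exactly when all $k$ summands equal $4t^{1/k}$. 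Solving the latter gives $\varepsilon_i=\tfrac14\,t^{(k-1-i)/k}>0$ for $0\le i\le k-2$ (consistent with $\varepsilon_{k-1}=\tfrac14$ and valid for every $t>0$), and hence
$$
\sigma\bigl(\mu_{Q^n}\circ f^{-1},t\bigr)\le 8C_1 d\,k\,t^{1/k},
$$
which is the claim with $C=8C_1$; for $k=1$ the chain collapses to the single terminal step and gives the same estimate. The one point that requires care is the constant: a naive induction that re-optimizes a single $\varepsilon$ after each differentiation accumulates a factor growing like $8^{k}$, and it is only the telescoping of the product $\prod_i(\text{summand}_i)=4^{k}t$ together with AM--GM that keeps the loss linear in $k$. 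The remaining ingredients --- the identity $\|D_{e_j}I_{Q^n}\|_{\rm TV}=2$, the degree bookkeeping $f_i\in\mathcal{P}_d(\mathbb{R}^n)$, and the absence of any need to treat large $t$ separately --- are routine.
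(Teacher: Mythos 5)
Your proof is correct and rests on exactly the same ingredients as the paper's: Corollary~\ref{cor-pol} applied along coordinate directions with $\|D_{e_j}I_{Q^n}\|_{\rm TV}=2$ from \eqref{Krug}, Theorem~\ref{T-meas} to convert $\mu_{Q^n}\bigl(|f_{i+1}|\le 2\varepsilon\bigr)$ into $\sigma(\mu_{Q^n}\circ f_{i+1}^{-1},4\varepsilon)$, and termination when $|f_k|\ge 1$; the only difference is that you unroll the paper's induction on $k$ and pick all $k$ scales at once as a geometric progression, arriving at the same constant $C=8C_1$. One small correction to your closing aside: an induction that re-optimizes a single $\varepsilon$ at each step need not lose a factor $8^k$ --- the paper's induction does exactly that, but phrases the hypothesis with the final constant $8c$ so that the per-step optimization reproduces $(8c)^{1/k}C^{(k-1)/k}=C$ and the loss stays linear in $k$.
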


\begin{proof}	
From Corollary \ref{cor-pol} we know that
$$
\int_{Q^n}\varphi'(f(x))\, dx
\le 
cd\|\varphi\|_\infty\varepsilon^{-1}\|D_{e_j} I_{Q^n}\|_{\rm TV}
+ \|\varphi'\|_\infty\int_{Q^n}I_{\{ |\frac{\partial f}{\partial x_j}|\le 2\varepsilon\}}\, dx
$$
for every $\varphi\in C_b^\infty(\mathbb{R})$,
where $e_j$ denotes the $j$-th standard basis vector in~$\mathbb{R}^n$.
By \eqref{Krug}, we have
$\|D_{e_j} I_{Q^n}\|_{\rm TV}=2$.
Therefore,
\begin{equation}\label{eq-cube}
\int_{Q^n}\varphi'(f(x))\, dx
\le 
2cd\|\varphi\|_\infty\varepsilon^{-1}
+ \|\varphi'\|_\infty\mu_{Q^n}\bigl(|\partial f/\partial x_j|\le 2\varepsilon\bigr).
\end{equation}

We proceed by induction. The base case $k=1$,
i.e. $\bigl|\frac{\partial f}{\partial x_j}\bigr|\ge 1$ on $Q^n$, is immediate.
In this case, taking $\varepsilon=1/3$, we obtain the announced estimate.

Inductive step. Assume that $k\ge 2$ and that $k_j\ge1$ for some $j$. 
By the inductive hypothesis, we obtain that
$$
\sigma(\mu_{Q^n}\circ (\partial f/\partial x_j)^{-1}, t)
\le 
Cd(k-1) t^{1/(k-1)}.
$$
In particular, by Theorem \ref{T-meas}, we have
$$
\mu_{Q^n}\bigr(|\partial f/\partial x_j|\le 2\varepsilon\bigl)
\le \sigma(\mu_{Q^n}\circ (\partial f/\partial x_j)^{-1}, 4\varepsilon)\le Cd(k-1) (4\varepsilon)^{1/(k-1)},
$$
and \eqref{eq-cube} then implies
$$
\sigma(\mu_{Q^n}\circ f^{-1}, t)
\le 2cdt\varepsilon^{-1} + Cd(k-1) (4\varepsilon)^{1/(k-1)}.
$$
By taking $\varepsilon = 4^{-1/k} (2c/ C)^{(k-1)/k} t^{(k-1)/k}$, we arrive at the estimate
\begin{align*}
\sigma(\mu_{Q^n}\circ f^{-1}, t)
&\le \bigl((8c)^{1/k}C^{(k-1)/k} + (k-1) (8c)^{1/k}C^{(k-1)/k} \bigr)dt^{1/k} 
\\
&= (8c)^{1/k}C^{(k-1)/k}dkt^{1/k}.
\end{align*}
Thus, the conclusion of the proposition holds with $C = 8c$.
\end{proof}

The following statement is a special case of Theorem~\ref{reg-prod-1}, corresponding to the unit cube.

\begin{proposition}\label{cor-coeff}
There exists an absolute constant $C>0$ such that
for all $d, n\in \mathbb{N}$ and for every 
non-constant $f\in \mathcal{P}_d(\mathbb{R}^n)$,
one has
$$
\sigma(\mu_{Q^n}\circ f^{-1}, t)
\le 
C \min\{d, n\} [f]_2^{-1/d} t^{1/d}\quad \forall t>0.
$$
\end{proposition}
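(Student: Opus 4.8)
The plan is to specialize Theorem~\ref{T-main-reg} to the convex body $K=Q^n$, which has volume $1$, and then to bound the variance $\mathbb{D}_{Q^n}f$ from below by the squared norm of the top-degree coefficient vector of $f$. Theorem~\ref{T-main-reg} gives directly
$$
\sigma(\mu_{Q^n}\circ f^{-1},t)\ \le\ \frac{C\min\{d,n\}}{(\mathbb{D}_{Q^n}f)^{1/2d}}\,t^{1/d},\qquad
\mathbb{D}_{Q^n}f=\int_{Q^n}\Bigl(f-\int_{Q^n}f\Bigr)^2,
$$
so it suffices to prove that there is an absolute constant $c\in(0,1)$ with
$$
\mathbb{D}_{Q^n}f\ \ge\ c^{\,d(f)}\,[f]_2^2 .
$$
Indeed, since $1\le d(f)\le d$ and $c<1$, this yields $(\mathbb{D}_{Q^n}f)^{-1/2d}\le c^{-1/2}[f]_2^{-1/d}$, and the asserted inequality follows with the constant $c^{-1/2}C$.

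For the variance bound I would expand $f$ in the orthogonal basis of $L^2(\mu_{Q^n})$ consisting of the products $\prod_{i=1}^n\bar P_{j_i}(x_i)$, where $\bar P_j$ is the Legendre polynomial on $[-\tfrac12,\tfrac12]$ normalized so that it has leading coefficient $\binom{2j}{j}$ and $\int_{-1/2}^{1/2}\bar P_j^2=\tfrac{1}{2j+1}$. The transition from the monomial basis to this basis is triangular with respect to the total degree, and among the terms of the top degree $k:=d(f)$ the coefficient of $\prod_i\bar P_{j_i}$ in $f$ is exactly $a_{j_1,\dots,j_n}\prod_i\binom{2j_i}{j_i}^{-1}$, because monomials of degree $<k$ contribute only to strictly lower-degree basis elements. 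Since $k\ge1$, subtracting the mean $\int_{Q^n}f$ (the coefficient of the constant basis element) leaves the degree-$k$ part untouched, so Parseval's identity gives
$$
\mathbb{D}_{Q^n}f\ \ge\ \sum_{j_1+\dots+j_n=k}a_{j_1,\dots,j_n}^2\prod_{i=1}^n\frac{1}{(2j_i+1)\binom{2j_i}{j_i}^2}\ \ge\ 4^{-3k}\,[f]_2^2,
$$
where the last inequality uses $(2j+1)\binom{2j}{j}^2\le 4^{3j}$ for all $j\ge0$ together with $\sum_i j_i=k$. This establishes the claim with $c=4^{-3}$; this step is essentially \cite[Theorem~1]{GM22}, which could also be invoked directly.

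The only delicate point is the bookkeeping in this change of basis: one must check that the top-degree monomial coefficients reappear unchanged, up to the explicit normalizing factors $\binom{2j_i}{j_i}^{-1}$, as the top-degree coefficients in the Legendre expansion, so that no cancellation against lower-order terms is possible, and that the product of the factors $(2j_i+1)\binom{2j_i}{j_i}^2$ is controlled by $4^{3d(f)}$ rather than by a quantity involving the dimension $n$ --- this is precisely what keeps the final estimate dimension-free. Once the triangular structure of the monomial-to-Legendre transition is in place, both points are routine, and the remainder is a direct substitution into Theorem~\ref{T-main-reg}.
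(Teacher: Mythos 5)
Your proposal is correct and follows essentially the same route as the paper: apply the dimension-free regularity estimate (Theorem~\ref{T-main-reg}, equivalently Theorem~\ref{CW-T} for $K=Q^n$) and then lower-bound $\mathbb{D}_{Q^n}f$ by $c^{\,2d(f)}[f]_2^2$, absorbing the $c^{-d(f)/2d}$ factor into the absolute constant. The only difference is that the paper simply invokes \cite[Theorem~1(2)]{GM22} for the variance bound, whereas you prove it directly via the tensor-Legendre expansion on $[-\tfrac12,\tfrac12]^n$; your triangularity argument and the bound $(2j+1)\binom{2j}{j}^2\le 4^{3j}$ are correct, so this is a sound self-contained substitute for the citation.
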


\begin{proof}
By Theorem \ref{CW-T}, we have
$$
\sigma(\mu_{Q^n}\circ f^{-1}, t)
\le 
C\min\{d, n\} \|f-\mathbb{E}f\|_{L^2(Q^n)}^{-1/d} t^{1/d}\quad \forall t>0.
$$
It follows from~\cite[Theorem~1(2)]{GM22} that
$$
\|f-\mathbb{E}f\|_{L^2(Q^n)}\ge c^d [f]_2
$$
for some universal constant $c>0$.
Combining these two estimates, we obtain the claimed bound.
\end{proof}

\subsection{Polynomials with bounded individual degree}

We recall that $\mathcal{P}_{d,m}(\mathbb{R}^n)$ stands for
the space of all polynomials of total degree at most $d$ and individual degree at most $m$ (see Definition~\ref{def-pol}).

\begin{theorem}\label{ind-deg}
For $d, m\in \mathbb{N}$ with $d\ge m$, there exists a constant $C(m, d)$,
depending only on these parameters, such that
for every non-constant polynomial $f\in \mathcal{P}_{d,m}(\mathbb{R}^n)$ (i.e. $d(f)\ge1$),
one has
$$
\sigma(\mu_{Q^n}\circ f^{-1}, t)
\le 
C(m, d) [f]_\infty^{-1/m} t^{1/m}\bigl(|\ln ([f]_\infty^{-1}t)|^{d-m}+1\bigr)
\quad \forall t>0.
$$
\end{theorem}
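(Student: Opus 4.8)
The plan is to prove Theorem~\ref{ind-deg} by induction on the individual degree $m$, following the inductive scheme used in the proof of the first proposition of this section but now tracking the dependence on the total degree through a logarithmic factor. The base case $m=1$ is essentially covered by Proposition~\ref{cor-coeff}: if $f\in\mathcal{P}_{d,1}(\mathbb{R}^n)$ is non-constant, then $f$ is affine (or more precisely each variable enters linearly, but $d(f)$ could still exceed $1$ — one should first reduce to $d=m$ by noting that a polynomial of individual degree $m$ and total degree $d$ with $d\ge m$ is covered by the same argument with the logarithmic exponent $d-m$). For $m=1$, $[f]_2$ and $[f]_\infty$ are comparable up to a factor depending only on $d$ (the number of top-degree monomials is bounded in terms of $d$ when $d=m=1$, namely at most $n$... but this gives an $n$-dependent bound). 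So instead the base of the induction should be set up more carefully: reduce via Proposition~\ref{cor-coeff} only when it is cheap, and otherwise use the one-variable directional estimate.

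The main engine is Corollary~\ref{cor-pol} applied along a coordinate direction $e_j$: for the uniform measure on $Q^n$ one has $\|D_{e_j}I_{Q^n}\|_{\rm TV}=2$, so
$$
\int_{Q^n}\varphi'(f)\,dx
\le
Cd\|\varphi\|_\infty\varepsilon^{-1}
+\|\varphi'\|_\infty\,\mu_{Q^n}\bigl(|\partial f/\partial x_j|\le 2\varepsilon\bigr).
$$
The coordinate $j$ must be chosen so that $\partial f/\partial x_j$ has a top-degree coefficient of controlled size; concretely, pick $j$ so that some monomial realizing $[f]_\infty$ has positive exponent in $x_j$, which gives $[\partial f/\partial x_j]_\infty \ge [f]_\infty$ (the coefficient gets multiplied by the exponent, which is at least $1$), while $\partial f/\partial x_j\in\mathcal{P}_{d-1,m-1}$ whenever the selected monomial is one of maximal total degree and the differentiation strictly lowers the individual degree bound relevant to that monomial. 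One then applies Theorem~\ref{T-meas} to convert the sublevel-set measure $\mu_{Q^n}(|\partial f/\partial x_j|\le 2\varepsilon)$ into $\sigma\bigl(\mu_{Q^n}\circ(\partial f/\partial x_j)^{-1},4\varepsilon\bigr)$, and feeds in the inductive hypothesis for $\partial f/\partial x_j$ with individual degree $m-1$. This produces a bound of the form
$$
\sigma(\mu_{Q^n}\circ f^{-1},t)
\le
Cdt\varepsilon^{-1}
+C(m-1,d-1)[f]_\infty^{-1/(m-1)}(4\varepsilon)^{1/(m-1)}\bigl(|\ln([f]_\infty^{-1}4\varepsilon)|^{d-m}+1\bigr),
$$
and then one optimizes in $\varepsilon$.

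The delicate point, and the one I expect to be the main obstacle, is the optimization step: unlike in the clean power-law induction of the first proposition, the presence of the logarithmic factor $|\ln([f]_\infty^{-1}\varepsilon)|^{d-m}$ means the optimal $\varepsilon$ is no longer a pure power of $t$, and a naive choice $\varepsilon\sim t^{(m-1)/m}$ will, after substitution, produce an extra logarithmic factor — precisely accounting for why the exponent on the logarithm increases from $d-m$ to $d-m$... wait, it should stay the same, so one needs to be careful: differentiating lowers $m$ by one and $d$ by one, so $d-m$ is preserved at each step, and the extra logarithm picked up in each optimization is what accumulates to the final $|\ln|^{d-m}$ when starting from the base case $m$ arbitrary but $d=m$ (no logarithm) versus $d>m$. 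The bookkeeping is: each inductive step, going from individual degree $\ell$ to $\ell+1$ with the total degree also dropping, must not increase the logarithmic exponent, so the logarithm must already be present when $d>m$ and absent when $d=m$; I would set the base case at $m$ arbitrary with $d=m$, where Proposition~\ref{cor-coeff} applies with a $d$-dependent constant absorbing the comparison of $[f]_\infty$ and $[f]_2$ (their ratio is bounded by the square root of the number of top monomials, which for total degree equal to individual degree is bounded in terms of $d$ alone — this needs a small combinatorial check). Then induct on $d-m$, and at each step one logarithm is gained from the $\varepsilon$-optimization, using the elementary inequality that for $\varepsilon$ chosen as the natural power of $t$ times lower-order corrections, $|\ln([f]_\infty^{-1}\varepsilon)|\le C_m(|\ln([f]_\infty^{-1}t)|+1)$. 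Handling the constants uniformly so that $C(m,d)$ depends only on $m,d$ (and not on $n$) is automatic here because every ingredient — Corollary~\ref{cor-pol}, the equality $\|D_{e_j}I_{Q^n}\|_{\rm TV}=2$, Theorem~\ref{T-meas}, and Proposition~\ref{cor-coeff} — is already dimension-free.
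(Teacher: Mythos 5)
There is a genuine gap, and it is exactly at the point you flagged as delicate, but the problem occurs earlier than the $\varepsilon$-optimization: the claim that $\partial f/\partial x_j\in\mathcal{P}_{d-1,m-1}(\mathbb{R}^n)$ is false in general. Differentiating in $x_j$ lowers only the $x_j$-exponent of each monomial; the exponents of the other variables are untouched, so the individual degree of $\partial f/\partial x_j$ is typically still $m$ (take $f=x_1^mx_2^m$: then $\partial f/\partial x_1=mx_1^{m-1}x_2^m$ has individual degree $m$). Thus the inductive hypothesis you want to invoke, with exponent $(4\varepsilon)^{1/(m-1)}$, is not available for the derivative. If you use the correct membership $\partial f/\partial x_j\in\mathcal{P}_{d-1,m}$ and induct on $d$ with $m$ fixed, the sublevel bound you feed into Corollary~\ref{cor-pol} is only of order $\varepsilon^{1/m}$, and balancing $Cdt\varepsilon^{-1}$ against $\varepsilon^{1/m}$ gives a final rate $t^{1/(m+1)}$, strictly weaker than the claimed $t^{1/m}$. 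So the derivative-based induction, which works for the first proposition of this section (where one has a pointwise lower bound on a fixed mixed partial), cannot by itself produce a rate governed by the individual degree rather than the total degree. (Your base case is fine, and simpler than you feared: since $[f]_2\ge[f]_\infty$, Proposition~\ref{cor-coeff} directly gives the $d=m$ case with no combinatorial comparison.)

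The paper's inductive step is structurally different and is designed precisely to avoid this loss. Writing $f=\sum_{j=0}^m f_j(x_1,\dots,x_{n-1})x_n^j$ with $k_n\ge1$ for a monomial realizing $[f]_\infty$, it adds an independent auxiliary uniform variable $V$ and splits $\mathbb{E}\varphi'(f(U))$ into a term controlled by $\sigma(\mu_{Q^n}\circ f^{-1},\varepsilon)$ itself and the smoothed term $\mathbb{E}\varphi'(f(U)+\varepsilon V)$. For the latter it applies the dimension-free Theorem~\ref{CW-T} conditionally in the pair $(U_n,V)$ — this is where the rate $t^{1/m}$ comes from, since $f$ has degree at most $m$ in $x_n$ — and then lower-bounds the conditional variance via Markov--Nikolskii inequalities by $c(m)\,|f_{k_n}(\widetilde U_n)|^2$. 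The induction (on the total degree $d$, with $m$ fixed) is applied not to a derivative of $f$ but to the coefficient polynomial $f_{k_n}\in\mathcal{P}_{d-1,m}(\mathbb{R}^{n-1})$ with $[f_{k_n}]_\infty=[f]_\infty$, and it is only used to control how often this coefficient is small; that cost enters through an integral against $(s+\varepsilon)^{-1-1/m}$ and produces exactly the extra logarithm, not a worse power. Finally, because the first term of the splitting involves $\sigma(\mu_{Q^n}\circ f^{-1},\varepsilon)$ rather than $t\varepsilon^{-1}$, the paper closes the loop with a telescoping series over the scales $t^k$ instead of a one-shot optimization in $\varepsilon$. If you want to salvage your scheme, you would need to replace the derivative $\partial f/\partial x_j$ by some object whose relevant parameter genuinely drops (as the coefficient polynomial's total degree does), since no choice of coordinate makes the individual degree decrease under a single differentiation.
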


\begin{proof}
Let $k_1,\ldots,k_n$ with $k_1+\cdots+k_n = d(f)$ be such that
$$
[f]_\infty:=\max\{|a_{j_1,\ldots, j_n}|\colon j_1+\ldots+j_n= d(f)\} = |a_{k_1, \ldots, k_n}|.
$$
Without loss of generality, we may assume that $[f]_\infty = 1$.
For brevity, we will use probabilistic notation.
Let $U=(U_1,\ldots,U_n)$ be a random vector with independent coordinates uniformly distributed on $\bigl[-\tfrac{1}{2},\tfrac{1}{2}\bigr]$. Then $\mu_{Q^n}$ coincides with the distribution of $U$.

\vskip .05in

\noindent	
{\bf Step 1.}
We argue by induction on $d$. First, for $d=m$, Proposition \ref{cor-coeff} implies
$$
\sigma(\mu_{Q^n}\circ f^{-1}, t)\le C mt^{1/m}.
$$
Thus, the base case $d=m$ of the induction holds.

\vskip .05in

\noindent	
{\bf Step 2.}
We now make the inductive step.
Let $V$ be a random variable, independent
of the random vector~$U$, and uniformly distributed on $[-\frac{1}{2}, \frac{1}{2}]$.
We observe that, for any $\varepsilon>0$,
one has
\begin{equation}\label{eq-split}
\mathbb{E}\, \varphi'\bigl(f(U)\bigr)
= \mathbb{E}\bigl[\varphi'\bigl(f(U)\bigr)-\varphi'\bigl(f(U) + \varepsilon V\bigr)\bigr]
+
\mathbb{E}\, \varphi'\bigl(f(U) + \varepsilon V\bigr).
\end{equation}
For the first term, by \eqref{T-equiv} we obtain
\begin{align}\label{eq-first}
\mathbb{E}\bigl[\varphi'\bigl(f(&U)\bigr)-\varphi'\bigl(f(U) + \varepsilon V\bigr)\bigr]
\\
&\le
\|\varphi'\|_\infty\, \mathbb{E}_V\Bigl[\int_\mathbb{R}
|\varrho_{\mu_{Q^n}\circ f^{-1}}(t) - \varrho_{\mu_{Q^n}\circ f^{-1}}(t - \varepsilon V)|\, dt\Bigr]\nonumber
\\
&\le
2\|\varphi'\|_\infty\, \mathbb{E}_V\bigl[\sigma(\mu_{Q^n}\circ f^{-1}, \varepsilon |V|)\bigr]\nonumber
\\
&\le
2\|\varphi'\|_\infty\, \sigma\bigl(\mu_{Q^n}\circ f^{-1}, \varepsilon\cdot \mathbb{E}|V|\bigr)
\le 2\|\varphi'\|_\infty\, \sigma(\mu_{Q^n}\circ f^{-1}, \varepsilon),\nonumber
\end{align}
where, in the last two steps, we have used the concavity and monotonicity of the function
$\sigma(\mu_{Q^n}\circ f^{-1}, \cdot)$
(see Lemma 2.1 in \cite{Kos-MS}).

\vskip .05in

\noindent	
{\bf Step 3.}
To estimate the second term in~\eqref{eq-split}, namely
$\mathbb{E}\, \varphi'\bigl(f(U) + \varepsilon V\bigr)$, 
we may, without loss of generality, assume that $k_n \ge 1$, 
and regard the polynomial $f$ as a polynomial in $x_n$:
$$
f(x_1,\ldots, x_{n-1},x_n) = \sum_{j=0}^{m} f_j(x_1,\ldots, x_{n-1})\, x_n^j.
$$
Let $\widetilde{U}_n = (U_1, \ldots, U_{n-1})$ for brevity. From Theorem~\ref{CW-T} (see Remark~\ref{rem-CW-T}), applied to $f(x_1,\ldots,x_{n-1},U_n,V)$, we obtain
\begin{align*}
\mathbb{E}\, \varphi'\bigl(f(U) + \varepsilon V\bigr)
&=
\mathbb{E}_{\widetilde{U}_n} \mathbb{E}_{U_n, V}\,
\varphi'\bigl(f(\widetilde{U}_n, U_n) + \varepsilon V\bigr)
\\
&\le
Cm\|\varphi\|_\infty^{1/m}\|\varphi'\|_\infty^{1-1/m}
\mathbb{E}_{\widetilde{U}_n}
\bigl(\mathbb{D}_{U_n}f(\widetilde{U}_n, U_n)+\tfrac{\varepsilon^2}{12}\bigr)^{-1/(2m)}.
\end{align*}
Furthermore, applying consecutively 
the Markov-type inequality in $L^2$ (see~\cite[Theorem~1.7.7]{MMR}) 
and the Nikolskii-type inequality for algebraic polynomials 
(see~\cite[Theorem~2.6]{DL93}), we obtain
\begin{align*}
\mathbb{D}_{U_n}f(\widetilde{U}_n, U_n)&\ge (c_1m^4)^{-k_n}\mathbb{E}_{U_n}\Bigl|\frac{\partial^{k_n}}{\partial x_n^{k_n}}f(\widetilde{U}_n, U_n)\Bigr|^2
\\
&\ge (c_1m^4)^{-k_n}(c_2m)^{-2}\Bigl|\frac{\partial^{k_n}}{\partial x_n^{k_n}}f(\widetilde{U}_n, 0)\Bigr|^2
\\ 
&=
(c_1m^4)^{-k_n}(k_n!)^2(c_2m)^{-2}\bigl|f_{k_n}(\widetilde{U}_n)\bigr|^2
\\
&\ge 
(c_1m^2)^{-k_n}\Bigl(\frac{k_n}{em}\Bigr)^{2k_n}(c_2m)^{-2}\bigl|f_{k_n}(\widetilde{U}_n)\bigr|^2
\\
&\ge 
(c_3m^2)^{-m}\bigl|f_{k_n}(\widetilde{U}_n)\bigr|^2,
\end{align*}
where in the last step, we used the 
estimate
$\bigl(\frac{k}{em}\bigr)^k\ge e^{-m}$
for $k\le m$.
Without loss of generality, we may assume that $c_3\ge1$.
Thus, we have
\begin{align*}
\mathbb{E}\, \varphi'\bigl(f(U) &+ \varepsilon V\bigr)
\\
&\le
Cm\|\varphi\|_\infty^{1/m}\|\varphi'\|_\infty^{1-1/m}
\mathbb{E}_{\widetilde{U}_n}\Bigl[ \bigl((c_3m^2)^{-m}|f_{k_n}(\widetilde{U}_n)|^2 + \tfrac{\varepsilon^2}{12}\bigr)^{-1/(2m)}\Bigr]
\\
&\le
c_4m^2\|\varphi\|_\infty^{1/m}\|\varphi'\|_\infty^{1-1/m}
\mathbb{E}_{\widetilde{U}_n}\bigl[ \bigl(|f_{k_n}(\widetilde{U}_n)|^2 + \varepsilon^2\bigr)^{-1/(2m)}\bigr].
\end{align*}
Furthermore, by Theorem \ref{T-meas}, we have
\begin{align*}
\mathbb{E}_{\widetilde{U}_n}\bigl[ \bigl(|f_{k_n}(\widetilde{U}_n)|^2 +& \varepsilon^2\bigr)^{-1/(2m)}\bigr]
\\
&=
\int_{0}^{\varepsilon^{-1/m}}
P\Bigl(\bigl(|f_{k_n}(\widetilde{U}_n)|^2 + \varepsilon^2\bigr)^{-1/2m}\ge \tau\Bigr)\, d\tau
\\
&=
\frac{1}{m}\int_{0}^{\infty}\frac{s}{(s^2+\varepsilon^2)^{1+1/2m}}\,
P\bigl(|f_{k_n}(\widetilde{U}_n)|\le s\bigr)\, ds
\\
&\le
\frac{4}{m}\int_{0}^{\infty}\frac{1}{(s+\varepsilon)^{1+1/m}}\,
P\bigl(|f_{k_n}(\widetilde{U}_n)|\le s+\varepsilon\bigr)\, ds,
\\
&\le
\frac{8}{m}\int_{0}^{\infty}\frac{1}{(s+\varepsilon)^{1+1/m}}\,
\sigma(\mu_{Q^{n-1}}\circ f_{k_n}^{-1}, s+\varepsilon)\, ds,
\end{align*}
where, in the last step, we applied Theorem~\ref{T-meas} and used
the estimate 
$$
\sigma(\mu_{Q^{n-1}}\circ f_{k_n}^{-1}, 2t)\le 2\sigma(\mu_{Q^{n-1}}\circ f_{k_n}^{-1}, t)\quad \forall t>0,
$$ 
which follows from the concavity of the function 
$\sigma(\mu_{Q^{n-1}}\circ f_{k_n}^{-1}, \cdot)$.
Since 
$$
d[f_{k_n}]\le d-1 \hbox{ and } [f_{k_n}]_\infty=[f]_\infty=1,
$$
we can apply the inductive hypothesis
and obtain the estimate
$$
\sigma(\mu_{Q^{n-1}}\circ f_{k_n}^{-1}, s+\varepsilon)
\le
C(m, d-1)(s+\varepsilon)^{1/m}\bigl(|\ln (s+\varepsilon)|^{d-1-m}+1\bigr)
$$
Therefore, for $\varepsilon\in(0, 4^{-1}]$, we have
\begin{align*}
\int_{0}^{\infty}\frac{1}{(s+\varepsilon)^{1+1/m}}\,
\sigma(\mu_{Q^{n-1}}\circ f_{k_n}^{-1}, s+\varepsilon)\, ds\quad\quad\quad\quad\quad&	
\\
\le
C(m, d-1)\int_{0}^{1-\varepsilon}
\frac{(s+\varepsilon)^{1/m}\bigl(|\ln (s+\varepsilon)|^{d-1-m}+1\bigr)}{(s+\varepsilon)^{1+1/m}}&\, ds
\\+
\int_{1-\varepsilon}^{\infty}&\frac{1}{(s+\varepsilon)^{1+1/m}}\, ds
\\
=
C(m, d-1)\int_{0}^{1-\varepsilon}
\frac{|\ln (s+\varepsilon)|^{d-1-m}+1}{s+\varepsilon}\, ds
+
m\quad\quad&
\\
=C(m, d-1)
\bigl((d-m)^{-1}|\ln\varepsilon|^{d-m} +|\ln\varepsilon|\bigr)+m\quad\quad\ &
\\
\le
2C(m, d-1)|\ln \varepsilon|^{d-m}+m.\quad\quad\quad\quad\quad\quad\quad\quad\quad\quad\ &
\end{align*}
Thus, for $\varepsilon\in(0, 4^{-1}]$,
$$
\mathbb{E}\, \varphi'\bigl(f(U) + \varepsilon V\bigr)
\le
c_5m\|\varphi\|_\infty^{1/m}\|\varphi'\|_\infty^{1-1/m}\bigl(2C(m, d-1) |\ln\varepsilon|^{d-m} + m\bigr),
$$
and, by combining this estimate with \eqref{eq-first} and using \eqref{eq-split},
we obtain 
\begin{align}\label{eq-inter}
\sigma(\mu_{Q^n}\circ & f^{-1}, t)
\\
&\le
2 \sigma(\mu_{Q^n}\circ f^{-1}, \varepsilon)
+
c_5mt^{1/m}\bigl(2C(m, d-1) |\ln\varepsilon|^{d-m} + m\bigr)\nonumber
\end{align}
for any $t>0$ and $\varepsilon\in(0, 4^{-1}]$.

\vskip .05in

\noindent	
{\bf Step 4.}
Assume that $t\in(0,4^{-d}]$.
Then
$$
\sigma(\mu_{Q^n}\circ f^{-1}, t)
=
\sum\limits_{k=1}^{\infty}2^{k-1}\Bigl[\sigma\bigl(\mu_{Q^n}\circ f^{-1}, t^{k}\bigr) -
2\sigma\bigl(\mu_{Q^n}\circ f^{-1}, t^{k+1}\bigr)\Bigr],
$$
since, by Theorem~\ref{CW-T}, we have
\begin{align*}
2^k\sigma\bigl(\mu_{Q^n}\circ f^{-1}, t^{k+1}\bigr)
&\le
Cd[\mathbb{D}f(U)]^{-1/2d}\, t^{(k+1)/d}2^{k}
\\
&\le Cd[\mathbb{D}f(U)]^{-1/2d}\, 4^{-1}2^{-k}\to0
\end{align*}
as $k\to +\infty$.
Hence, by combining the above estimate with the inequality \eqref{eq-inter}, we obtain
\begin{align*}
\sigma(\mu_{Q^n}\circ f^{-1}, t)
\le
c_5m\sum\limits_{k=1}^{\infty}
2^{k-1}t^{k/m}\bigl(2C(m, d-1) (k+1)^{d-m}&|\ln t|^{d-m} + m\bigr)
\\
\le c_5m\Bigl(2C(m, d-1)
\sum\limits_{k=1}^{\infty}
2^{-(k-1)}(k+1)^{d-m}\Bigr)t^{1/m}|\ln t|^{d-m}& 
\\
+ c_5m^2&\sum\limits_{k=1}^{\infty}
2^{-(k-1)} t^{1/m}
\\
\le
\Bigl(2c_5mC(m, d-1)
\sum\limits_{k=0}^{\infty}
2^{-k}(k+2)^{d-m}\Bigr)t^{1/m}|\ln t|^{d-m} + 2&c_5m^2 t^{1/m}.
\end{align*}
Thus,
$$
\sigma(\mu_{Q^n}\circ f^{-1}, t)
\le
C(m,d)t^{1/m}\bigl(|\ln t|^{d-m}+1\bigr)
$$
for $t\in(0, 4^{-d}]$.

\vskip .05in

\noindent
For $t\ge 4^{-d}$, we have
$$
\sigma(\mu_{Q^n}\circ f^{-1}, t)\le 1\le  4^{d/m} t^{1/m}\bigl(|\ln t|^{d-m} + 1\bigr).
$$
This completes the proof.
\end{proof}

\begin{remark}
It follows from the proof that, in the theorem above, one can take
$$
C(m, d) = (C\cdot(d-m+1))^{(d-m+1)^2}m^{d-m+1}
$$
for some universal constant $C\ge 1$.
\end{remark}

\section{Polynomial images of product measures}\label{sec-prod}

The proofs of the main results in this section are based on a key observation due to Bobkov, Chistyakov, and G\"otze
(see \cite[Lemma 4.3]{BChG}),
concerning a representation of any probability density $\varrho$ on $\mathbb{R}$ of bounded variation 
as a convex mixture of uniform distributions.  
Namely, a probability density $\varrho$ on $\mathbb{R}$ is said to be represented as a convex mixture of uniform distributions
with a mixing measure $\pi$ if $\pi$ is a probability measure
on the half-plane $\{a < b\}$ such that 
$$
\int_\mathbb{R}\varphi(x)\varrho(x)\, dx = \int_{\{a<b\}} \int_{\mathbb{R}}\varphi(x)\varrho_{[a, b]}(x)\, dx\, \pi(dadb)
\quad \forall \varphi\in C_b^\infty(\mathbb{R}),
$$
where
$$
\varrho_{[a, b]}=
\tfrac{1}{b-a}I_{[a, b]}.
$$

\begin{theorem}[see {\cite[Lemma 4.3]{BChG}}]\label{lem-BChG}
Any probability density $\varrho$
on $\mathbb{R}$ of bounded variation
can be represented as a convex
mixture
$$
\varrho=\int_{\{a<b\}} \varrho_{[a,b]}\, \pi(dadb)
$$
of uniform distributions with a mixing measure $\pi$,
such that
$$
\|D_1\varrho\|_{\rm TV}=\int_{\{a<b\}} \|D_1\varrho_{[a, b]}\|_{\rm TV}\, \pi(dadb).
$$
\end{theorem}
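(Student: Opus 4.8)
The plan is to read the mixing measure $\pi$ off the superlevel sets of $\varrho$ through the layer-cake (horizontal slicing) decomposition, and then to obtain the variation identity from the one-dimensional coarea formula --- or, what amounts to the same thing, from the fact that expressing $D_1\varrho$ as a superposition of the derivatives of the slices produces no cancellation. I will use two standard facts: a probability density $\varrho\in BV(\mathbb{R})$ is automatically bounded (so its superlevel sets $\{\varrho>t\}$ are empty for $t$ large), and a subset of $\mathbb{R}$ of finite perimeter coincides, up to a Lebesgue-null set, with a finite disjoint union of bounded open intervals no two of which are adjacent, whose perimeter equals twice the number of these intervals.

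For Lebesgue-almost every $t>0$ the set $\{\varrho>t\}$ has finite perimeter --- indeed the coarea formula gives $\int_0^\infty P(\{\varrho>t\})\,dt=\|D_1\varrho\|_{\rm TV}<\infty$ --- so I would fix a representative $E_t$ of it in canonical form $E_t=\bigsqcup_{i=1}^{m(t)}(a_i(t),b_i(t))$ (on the remaining null set of levels the choice is irrelevant), noting that a routine measurable-selection argument makes $t\mapsto\{(a_i(t),b_i(t))\}_i$ measurable. I would then set
$$
\pi:=\int_0^\infty\sum_{i=1}^{m(t)}\bigl(b_i(t)-a_i(t)\bigr)\,\delta_{(a_i(t),b_i(t))}\,dt .
$$
Since $\pi(\{a<b\})=\int_0^\infty\lambda(E_t)\,dt=\int_0^\infty\lambda(\{\varrho>t\})\,dt=\int_{\mathbb{R}}\varrho\,d\lambda=1$, this $\pi$ is a probability measure on $\{a<b\}$, and for every $\varphi\in C_b^\infty(\mathbb{R})$ Fubini's theorem together with $\varrho(x)=\int_0^\infty I_{\{\varrho>t\}}(x)\,dt$ yields
$$
\int_{\mathbb{R}}\varphi\varrho\,d\lambda=\int_0^\infty\sum_{i}\int_{a_i(t)}^{b_i(t)}\varphi\,d\lambda\,dt=\int_{\{a<b\}}\int_{\mathbb{R}}\varphi\,\varrho_{[a,b]}\,d\lambda\,\pi(dadb),
$$
which is exactly the required representation $\varrho=\int_{\{a<b\}}\varrho_{[a,b]}\,\pi(dadb)$.

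For the variation identity, note first that $D_1 I_{[a,b]}=\delta_a-\delta_b$, so $\|D_1\varrho_{[a,b]}\|_{\rm TV}=\tfrac{2}{b-a}$; hence the right-hand side of the claimed equality is $\int_0^\infty\sum_i(b_i(t)-a_i(t))\tfrac{2}{b_i(t)-a_i(t)}\,dt=2\int_0^\infty m(t)\,dt=\int_0^\infty P(E_t)\,dt$, which equals $\|D_1\varrho\|_{\rm TV}$ by the coarea formula. I would also record the self-contained version of this last step: differentiating the slicing representation gives $D_1\varrho=\mu^+-\mu^-$, where $\mu^+:=\int_0^\infty\sum_i\delta_{a_i(t)}\,dt$ and $\mu^-:=\int_0^\infty\sum_i\delta_{b_i(t)}\,dt$ are nonnegative measures of equal total mass $\int_0^\infty m(t)\,dt$; they are mutually singular, because the left-endpoints and the right-endpoints lie in disjoint sets (if some $x$ were a left-endpoint of $E_{t_1}$ and a right-endpoint of $E_{t_2}$ then $\varrho$ would satisfy $t_2<\varrho\le t_1$ a.e. just to the left of $x$ and $t_1<\varrho\le t_2$ a.e. just to the right, which is absurd), so $\mu^\pm$ are the Jordan parts of $D_1\varrho$ and $\|D_1\varrho\|_{\rm TV}=\mu^+(\mathbb{R})+\mu^-(\mathbb{R})$. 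The step that genuinely requires care --- and which I expect to be the main obstacle --- is that the slicing must be performed with the finite-perimeter representatives $E_t$, not with the literal topological superlevel sets: at a ``pit'' level, where $\varrho$ descends to exactly the value $t$ at an isolated point, the number of connected components of $\{\varrho>t\}$ can strictly exceed $\tfrac12 P(\{\varrho>t\})$, and a mixing measure built from those components would over-count $\int\|D_1\varrho_{[a,b]}\|_{\rm TV}\,\pi(dadb)$. The remaining ingredients --- measurability of $t\mapsto E_t$, Fubini, and $\|D_1 I_{[a,b]}\|_{\rm TV}=2$ --- are routine.
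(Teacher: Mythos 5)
Your argument is correct. Note first that the paper itself gives no proof of this statement: it is quoted verbatim from Bobkov--Chistyakov--G\"otze \cite[Lemma 4.3]{BChG}, so there is no ``paper proof'' to compare against line by line; your write-up is a self-contained derivation of the cited lemma. The route you take --- the layer-cake representation $\varrho(x)=\int_0^\infty I_{\{\varrho>t\}}(x)\,dt$, the structure theorem identifying a.e.\ superlevel set of finite perimeter with a finite union of non-adjacent bounded open intervals, the mixing measure $\pi=\int_0^\infty\sum_i(b_i(t)-a_i(t))\,\delta_{(a_i(t),b_i(t))}\,dt$, and the coarea formula $\int_0^\infty P(\{\varrho>t\})\,dt=\|D_1\varrho\|_{\rm TV}$ --- is essentially the standard proof of this mixture representation, and all the computations check out: $\pi$ has total mass $\int_0^\infty\lambda(\{\varrho>t\})\,dt=1$, the Fubini step gives the representation against every $\varphi\in C_b^\infty(\mathbb{R})$, and $\int\|D_1\varrho_{[a,b]}\|_{\rm TV}\,\pi(da\,db)=\int_0^\infty 2m(t)\,dt$ matches the coarea integral. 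Your two supplementary observations are also sound and worth keeping: the warning that one must slice with the finite-perimeter representatives rather than the literal topological superlevel sets (otherwise ``pit'' levels over-count components and destroy the exact equality), and the no-cancellation argument showing that $\mu^+=\int_0^\infty\sum_i\delta_{a_i(t)}\,dt$ and $\mu^-=\int_0^\infty\sum_i\delta_{b_i(t)}\,dt$ are concentrated on disjoint sets, hence form the Jordan decomposition of $D_1\varrho$. The only points left at the level of ``routine'' are the measurable selection of the interval endpoints in $t$ and the finiteness $\int_0^\infty m(t)\,dt<\infty$ needed to differentiate under the integral in the self-contained variant (the latter is again supplied by the coarea formula), and both are indeed standard; no genuine gap remains.
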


\noindent
We recall that $\|D_1\varrho_{[a, b]}\|_{\rm TV} = \frac{2}{b-a}$.

\vskip .1in

The first main result of this section is as follows.

\begin{theorem}\label{Coeff-T-1} 
There exists an absolute constant $C>0$ such that for all $d, n\in\mathbb{N}$, for any functions 
$\varrho_1, \ldots, \varrho_n\colon \mathbb{R}\to\mathbb{R}$ of bounded variation
satisfying
$$
\|\varrho_j\|_{L^1(\mathbb{R})} = 1 \quad \forall j \in \{1, \dots, n\},
$$
for any non-constant polynomial $f\in\mathcal{P}_d(\mathbb{R}^n)$, 
and for any $\varphi\in C_b^\infty(\mathbb{R})$,
one has
\begin{align*}
\int_{\mathbb{R}^n} \varphi'(f(x))&\varrho_1(x_1)\ldots\varrho_n(x_n)\, dx
\\
&\le
C\min\{d, n\}\bigl(1+\max_{1\le j\le n}\|D_1\varrho_j\|_{\rm TV}\bigr)[f]_2^{-1/d}\|\varphi\|_\infty^{1/d}\|\varphi'\|_\infty^{1-1/d}.
\end{align*}
\end{theorem}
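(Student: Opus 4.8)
The plan is to reduce the bound to the cube case of Proposition~\ref{cor-coeff} by writing each $\varrho_j$ as a mixture of uniform densities, using Theorem~\ref{lem-BChG}. We may assume, as in Theorem~\ref{reg-prod-1}, that each $\varrho_j$ is a probability density of bounded variation (the signed case, if wanted, follows verbatim from the signed analogue of Theorem~\ref{lem-BChG}). So write $\varrho_j=\int_{\{a<b\}}\varrho_{[a,b]}\,\pi_j(da\,db)$ with $\int_{\{a<b\}}\tfrac{2}{b-a}\,\pi_j(da\,db)=\|D_1\varrho_j\|_{\rm TV}$. Then $\prod_j\varrho_j(x_j)$ is the mixture, over $\pi_1\otimes\cdots\otimes\pi_n$, of the uniform densities of the boxes $B=\prod_j[a_j,b_j]$, so by Fubini (licit since $\varphi'$ is bounded and $\prod_j\varrho_j\in L^1$)
$$
\int_{\mathbb{R}^n}\varphi'(f)\prod_j\varrho_j(x_j)\,dx=\int\!\cdots\!\int\Bigl(\int_{\mathbb{R}^n}\varphi'(f)\,d\mu_B\Bigr)\pi_1(da_1db_1)\cdots\pi_n(da_ndb_n).
$$
For a fixed box $B$, put $\ell_j:=b_j-a_j$, let $T_B\colon Q^n\to B$ be the affine bijection whose $j$-th coordinate is $y\mapsto\tfrac{a_j+b_j}{2}+\ell_j y_j$, and set $g_B:=f\circ T_B\in\mathcal{P}_d(\mathbb{R}^n)$; as $T_B$ is invertible, $g_B$ is non-constant with $d(g_B)=d(f)$. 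Since $\int_{\mathbb{R}^n}\varphi'(f)\,d\mu_B=\int_{Q^n}\varphi'(g_B)\,d\mu_{Q^n}$, Proposition~\ref{cor-coeff} together with the elementary bound $\int\varphi'\,d\nu\le\|\varphi'\|_\infty\,\sigma\bigl(\nu,\|\varphi\|_\infty/\|\varphi'\|_\infty\bigr)$ gives
$$
\int_{\mathbb{R}^n}\varphi'(f)\,d\mu_B\le C\min\{d,n\}\,[g_B]_2^{-1/d}\,\|\varphi\|_\infty^{1/d}\|\varphi'\|_\infty^{1-1/d}.
$$

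Everything now hinges on a lower bound for $[g_B]_2$ that survives integration against $\pi_1\otimes\cdots\otimes\pi_n$ without introducing dimensional constants. Write $D:=d(f)\ge1$ (so $[f]_2>0$); the leading form of $g_B$ is $\sum_{|\alpha|=D}a_\alpha\ell^\alpha y^\alpha$ with $\ell^\alpha=\prod_j\ell_j^{\alpha_j}$, hence $[g_B]_2^2=\sum_{|\alpha|=D}a_\alpha^2\ell^{2\alpha}$. The weighted AM--GM inequality with weights $p_\alpha:=a_\alpha^2/[f]_2^2$ (summing to $1$) then yields
$$
[g_B]_2^2\ \ge\ [f]_2^2\prod_{|\alpha|=D}(\ell^{2\alpha})^{p_\alpha}\ =\ [f]_2^2\prod_{j=1}^n\ell_j^{2\beta_j},\qquad\beta_j:=\sum_{|\alpha|=D}\alpha_j p_\alpha,
$$
where $\beta_j\ge0$, $\beta_j\le D\le d$, and $\sum_j\beta_j=D$. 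Thus $[g_B]_2^{-1/d}\le[f]_2^{-1/d}\prod_j\ell_j^{-\beta_j/d}$, and Tonelli factorizes the $\pi$-integral over $j$:
$$
\int_{\mathbb{R}^n}\varphi'(f)\prod_j\varrho_j(x_j)\,dx\le C\min\{d,n\}\,[f]_2^{-1/d}\|\varphi\|_\infty^{1/d}\|\varphi'\|_\infty^{1-1/d}\prod_{j=1}^n\int\ell_j^{-\beta_j/d}\,\pi_j(da_jdb_j).
$$
Because $0\le\beta_j/d\le1$, the function $t\mapsto t^{\beta_j/d}$ is concave, so Jensen's inequality gives $\int\ell_j^{-\beta_j/d}\,\pi_j\le\bigl(\int\ell_j^{-1}\,\pi_j\bigr)^{\beta_j/d}=\bigl(\tfrac12\|D_1\varrho_j\|_{\rm TV}\bigr)^{\beta_j/d}\le(1+M)^{\beta_j/d}$, where $M:=\max_j\|D_1\varrho_j\|_{\rm TV}$; multiplying over $j$ and using $\sum_j\beta_j=D\le d$ yields $\prod_j\int\ell_j^{-\beta_j/d}\,\pi_j\le(1+M)^{D/d}\le1+M$. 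This is exactly the asserted inequality.

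I expect the main obstacle to be precisely this final tensorization. The naive lower bounds $[g_B]_2\ge[f]_2\,\min_{\alpha\colon a_\alpha\ne0}\ell^\alpha$ or $[g_B]_2\ge[f]_2(\min_j\ell_j)^D$ are too lossy: each mixing measure $\pi_j$ may have $\int\ell_j^{-1}\,\pi_j$ small while $\int\min_j\ell_j^{-1}$ under the product measure grows with $n$, so these bounds would only produce a dimension-dependent constant. The weighted AM--GM step replaces $\min_j$ by the genuinely multiplicative $\prod_j\ell_j^{\beta_j}$, which tensorizes over $j$, and the identity $\sum_j\beta_j=d(f)\le d$ forces the exponent in $(1+M)^{D/d}$ to be at most $1$, producing the dimension-free factor $1+M$ rather than $(1+M)^n$. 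The remaining points are routine: the Fubini/Tonelli interchanges, the scaling bound for $\sigma$, and the verifications $d(g_B)=d(f)$ and $[f]_2>0$.
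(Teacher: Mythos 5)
For nonnegative $\varrho_j$ (genuine probability densities) your argument is correct, and it follows the same overall strategy as the paper: the Bobkov--Chistyakov--G\"otze mixture representation (Theorem~\ref{lem-BChG}), reduction of each box to the cube case (Proposition~\ref{cor-coeff}), and then a tensorization of the lower bound on $[g_B]_2$ over the coordinates. The tensorization itself is done differently: you use weighted AM--GM with weights $a_\alpha^2/[f]_2^2$ to get the multiplicative bound $[g_B]_2\ge[f]_2\prod_j\ell_j^{\beta_j}$ with $\sum_j\beta_j=d(f)\le d$, followed by a coordinatewise Jensen inequality against the mixing measures, whereas the paper bounds $[g]_2^{-1/d(f)}$ using convexity of $t\mapsto t^{-1/(2d(f))}$ and of the exponential, producing an expression \emph{linear} in the $\ell_k^{-1}$ that integrates directly (no Jensen needed). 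Both routes yield the dimension-free factor $1+M$; your consistent use of the exponent $1/d$ (rather than $1/d(f)$) also lets you avoid the paper's final case split $\|\varphi\|_\infty\lessgtr1$.

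However, the statement is for \emph{signed} BV functions with $\|\varrho_j\|_{L^1}=1$, and your dismissal of this case is a genuine gap. ``We may assume each $\varrho_j$ is a probability density'' is not a valid reduction: if you write $\varrho_j=\varrho_j^{+}-\varrho_j^{-}$ with masses $I_j^{\pm}$ and apply the probability-density case as a black box to each of the $2^n$ sign patterns (with densities $\varrho_j^{\varepsilon_j}/I_j^{\varepsilon_j}$), the per-pattern constant contains $\max_j\|D_1\varrho_j^{\varepsilon_j}\|_{\rm TV}/I_j^{\varepsilon_j}$, and after weighting by $\prod_jI_j^{\varepsilon_j}$ and summing over patterns one picks up a factor of order $nM$, i.e.\ a dimension-dependent constant. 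Nor is there a ready-made ``signed analogue'' of Theorem~\ref{lem-BChG}: one must first check $\|D_1\varrho_j^{\pm}\|_{\rm TV}\le C\|D_1\varrho_j\|_{\rm TV}$ (the paper proves a factor $12$ via \eqref{T-equiv}) and then carry the masses $I_j^{\pm}$ \emph{through} the argument rather than through the black-box statement, exactly as the paper does. The good news is that your multiplicative scheme does survive this bookkeeping: for a fixed sign pattern the $j$-th Jensen step gives $\bigl(\|D_1\varrho_j^{\varepsilon_j}\|_{\rm TV}/(2I_j^{\varepsilon_j})\bigr)^{\beta_j/d}$, so the pattern contributes at most a constant times $\prod_j(I_j^{\varepsilon_j})^{1-\beta_j/d}(CM)^{\beta_j/d}$, and $\sum_{\varepsilon}\prod_j(I_j^{\varepsilon_j})^{1-\beta_j/d}=\prod_j\bigl((I_j^{+})^{1-\beta_j/d}+(I_j^{-})^{1-\beta_j/d}\bigr)\le2^{d(f)/d}\le2$ by concavity of $s\mapsto s^{1-\beta_j/d}$ (the same device the paper uses in Theorem~\ref{Coeff-T-2}). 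So the gap is fixable along the lines you hint at, but it requires this explicit extra argument, not a verbatim appeal; as written, your proof only establishes the probability-density case (i.e.\ Theorem~\ref{reg-prod-1}), not Theorem~\ref{Coeff-T-1} as stated.
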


\begin{proof}
By scaling,
we may assume that $[f]_2=1$ and $\|\varphi'\|_\infty=1$.
Let 
$$
M:=\max\limits_{1\le j\le n}\|D_1\varrho_j\|_{\rm TV}.
$$
For each $j$, define
$$
\varrho_j^{1}:=\max\{\varrho_j, 0\},\quad 
\varrho_j^{-1}:=\max\{-\varrho_j, 0\},\quad
I_j^{\pm1}:=\int_{\mathbb{R}}\varrho_j^{\pm1}(t)\, dt.
$$
We note that the norm $\|D_1\varrho_j^{\pm 1}\|_{\rm TV}$ can be controlled by 
$\|D_1\varrho_j\|_{\rm TV}$.
Indeed, using \eqref{T-equiv}, and noting that the function 
$t\mapsto \max\{t, 0\}$ is $1$-Lipschitz,
we can write
\begin{align*}
&
\|D_1\varrho_j^{\pm1}\|_{\rm TV}
= \sup_{t>0}t^{-1}\sigma(\varrho_j^{\pm1}, t)
\le 6\sup_{t>0}t^{-1}\omega(\varrho_j^{\pm1}, t)
\\
&\le 6\sup_{t>0}t^{-1}\omega(\varrho_j, t)
\le 12\sup_{t>0}t^{-1}\sigma(\varrho_j, t)
= 12\|D_1\varrho_j\|_{\rm TV}\le 12M.
\end{align*}
Fixing an arbitrary choice of signs 
$\varepsilon_1, \ldots, \varepsilon_n \in \{-1, 1\}$, 
and applying Theorem~\ref{lem-BChG} to the 
densities $\frac{1}{I_j^{\varepsilon_j}}\varrho_j^{\varepsilon_j}$, when the integrals $I_j^{\varepsilon_j}$ are nonzero, we can write
\begin{align*}
&\int_{\mathbb{R}^n}\varphi'(f(x))\varrho_1^{\varepsilon_1}(x_1)\cdot\ldots\cdot\varrho_n^{\varepsilon_n}(x_n)\, dx
\\
&=\prod_{j=1}^n I_j^{\varepsilon_j}
\int_{\{a_1<b_1\}}\ldots \int_{\{a_n<b_n\}}
I({\bf a}, {\bf b})\, \pi_1^{\varepsilon_1}(da_1 db_1)\ldots\pi_n^{\varepsilon_n}(da_ndb_n),
\end{align*}
where ${\bf a}=(a_1, \ldots, a_n)$, 
${\bf b}=(b_1, \ldots, b_n)$, and
$$
I({\bf a}, {\bf b})=
\prod_{j=1}^n\tfrac{1}{b_j-a_j}\int_{\prod_{j=1}^n[a_j, b_j]}
\varphi'(f(x))\, dx.
$$
Let
\begin{equation}\label{eq-change}
Ly:=\Bigl(\tfrac{a_1+b_1}{2}+y_1(b_1-a_1), \ldots, \tfrac{a_n+b_n}{2}+y_n(b_n-a_n)\Bigr),
\quad
g(y): = f(Ly).
\end{equation}
After the change of variables $x=Ly$, we obtain
$$
\prod_{j=1}^n\tfrac{1}{b_j-a_j}\int_{\prod_{j=1}^n[a_j, b_j]}
\varphi'(f(x))\, dx
= 
\int_{Q^n}\varphi'(g(y))\, dy.
$$
We now estimate $[g]_2$.
If 
$$
f(x):=\sum_{k=0}^{d(f)} f_k(x),\quad
f_k(x)=
\sum_{j_1+\ldots+j_n = k}c_{j_1,\ldots, j_n}x_1^{j_1}\ldots x_n^{j_n},
$$
then
$$
g(y):=\sum_{k=0}^{d(f)}g_k(y),
\ \ 
g_k(y)=
\sum_{j_1+\ldots+j_n = k}\tilde{c}_{j_1,\ldots, j_n}x_1^{j_1}\ldots x_n^{j_n},\ k=0, 1, \ldots d(f)-1,
$$
for some $\tilde{c}_{j_1, \ldots, j_n}\in \mathbb{R}$,
and
\begin{equation}\label{eq-coeff}
g_{d(f)}(y) = 
\sum_{j_1+\ldots+j_n = d(f)}c_{j_1,\ldots, j_n}(b_1-a_1)^{j_1}\ldots(b_n-a_n)^{j_n}y_1^{j_1}\ldots y_n^{j_n}.
\end{equation}
Thus,
\begin{align*}
[g]_2^{-1/d(f)} &= \Bigl(\sum_{j_1+\ldots+j_n = d(f)}c_{j_1,\ldots, j_n}^2(b_1-a_1)^{2j_1}\ldots(b_n-a_n)^{2j_n} \Bigr)^{-1/2d(f)}
\\
&\le
\Bigl(\sum_{j_1+\ldots+j_n = d(f)}c_{j_1,\ldots, j_n}^2(b_1-a_1)^{-j_1/d(f)}\ldots(b_n-a_n)^{-j_n/d(f)} \Bigr),
\end{align*}
where we have used the convexity of the function $t\mapsto t^{-1/(2d(f))}$
on $(0, +\infty)$.
Using the convexity of the exponent, we deduce
$$
(b_1-a_1)^{-j_1/d(f)}\ldots(b_n-a_n)^{-j_n/d(f)}
\le
\frac{j_1}{d(f)}(b_1-a_1)^{-1}+\ldots+\frac{j_n}{d(f)}(b_n-a_n)^{-1}.
$$
Proposition~\ref{cor-coeff} now yields
\begin{align*}
\int_{Q^n}\varphi'(&g(y))\, dy
\\
&\le 
C\min\{d, n\}\|\varphi\|_\infty^{1/d(f)}
\Bigl(\sum_{j_1+\ldots+j_n = d(f)}c_{j_1,\ldots, j_n}^2\sum_{k=1}^n\frac{j_k}{d(f)}(b_k-a_k)^{-1} \Bigr).
\end{align*}
Thus, we have the estimate
\begin{align*}
&\int_{\mathbb{R}^n}\varphi'(f(x))\varrho_1^{\varepsilon_1}(x_1)\cdot\ldots\cdot\varrho_n^{\varepsilon_n}(x_n)\, dx
\\
&\le
6C\min\{d, n\}\|\varphi\|_\infty^{1/d(f)}
\prod_{j=1}^n I_j^{\varepsilon_j}
\Bigl(\sum_{j_1+\ldots+j_n = d(f)}c_{j_1,\ldots, j_n}^2\sum_{k=1}^n \frac{j_k}{d(f)}\frac{M}{I_k^{\varepsilon_k}} \Bigr)
\\
&\le
6CM\min\{d, n\}\|\varphi\|_\infty^{1/d(f)}
\Bigl(\sum_{j_1+\ldots+j_n = d(f)}c_{j_1,\ldots, j_n}^2\sum_{k=1}^n \frac{j_k}{d(f)}\prod_{j\ne k} I_j^{\varepsilon_j} \Bigr).
\end{align*}
This estimate remains valid even if one of the integrals $I_j^{\varepsilon_j}$ vanishes.
Finally, we observe that
\begin{align*}
&\int_{\mathbb{R}^n}\varphi'(f(x))\varrho_1(x_1)\cdot\ldots\cdot\varrho_n(x_n)\, dx
\\
&= \sum_{\varepsilon_1, \ldots, \varepsilon_n\in\{-1, 1\}}\prod_{j=1}^n\varepsilon_j
\int_{\mathbb{R}^n}\varphi'(f(x))\varrho_1^{\varepsilon_1}(x_1)\cdot\ldots\cdot\varrho_n^{\varepsilon_n}(x_n)\, dx
\\
&\le
6CM\min\{d, n\}\|\varphi\|_\infty^{1/d(f)}
\!\!\!
\sum_{\varepsilon_1, \ldots, \varepsilon_n\in\{-1, 1\}}
\sum_{j_1+\ldots+j_n = d(f)}c_{j_1,\ldots, j_n}^2\sum_{k=1}^n \frac{j_k}{d(f)}\prod_{j\ne k} I_j^{\varepsilon_j}
\\
&=
12CM\min\{d, n\}\|\varphi\|_\infty^{1/d(f)}
\sum_{j_1+\ldots+j_n = d(f)}c_{j_1,\ldots, j_n}^2\sum_{k=1}^n \frac{j_k}{d(f)}
\prod_{j\ne k}(I_j^{+1} + I_j^{-1})
\\
&\le 12CM\min\{d, n\}\|\varphi\|_\infty^{1/d(f)}.
\end{align*}
When $\|\varphi\|_\infty\le 1$,
we obtain
$$
\int_{\mathbb{R}^n}\varphi'(f(x))\varrho_1(x_1)\cdot\ldots\cdot\varrho_n(x_n)\, dx
\le 
12CM\min\{d, n\}\|\varphi\|_\infty^{1/d}.
$$
If $\|\varphi\|_\infty> 1$,
then
$$
\int_{\mathbb{R}^n}\varphi'(f(x))\varrho_1(x_1)\cdot\ldots\cdot\varrho_n(x_n)\, dx
\le  \|\varphi'\|_\infty=1\le \|\varphi\|_\infty^{1/d}.
$$
This completes the proof.
\end{proof}

\begin{remark}
In particular, Theorem~\ref{Coeff-T-1} implies Theorem~\ref{reg-prod-1}.
\end{remark}

\vskip .1in

We proceed to the second main result of this section.

\begin{theorem}\label{Coeff-T-2}
For $d, m\in \mathbb{N}$, $d\ge m$, there exists a constant $C(m, d)$,
depending only on these parameters, such that for any functions of bounded variation
$\varrho_1, \ldots, \varrho_n\colon \mathbb{R}\to\mathbb{R}$
satisfying 
$$
\|\varrho_j(t)\|_{L^1(\mathbb{R})}=1 \quad \forall j\in\{1, \ldots, n\},
$$ 
for any non-constant polynomial $f\in\mathcal{P}_{d, m}(\mathbb{R}^n)$, 
for any $t>0$ and any function $\varphi\in C_b^\infty(\mathbb{R})$ with $\|\varphi'\|_\infty\le 1$ and $\|\varphi\|_\infty\le t$,
one has
\begin{align*}
\int_{\mathbb{R}^n} &\varphi'(f(x))\varrho_1(x_1)\ldots\varrho_n(x_n)\, dx
\\
&\le 
C(m, d)\bigl(1+\max_{1\le j\le n}\|D_1\varrho_j\|_{\rm TV}\bigr)^{d/m} [f]_\infty^{-1/m} t^{1/m}\bigl(|\ln ([f]_\infty^{-1}t)|^{d-m}+1\bigr).
\end{align*}
\end{theorem}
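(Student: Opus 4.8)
The plan is to run the argument of the proof of Theorem~\ref{Coeff-T-1}, reducing to the unit cube by the mixture representation of Theorem~\ref{lem-BChG}, but now feeding in the sharper cube estimate of Theorem~\ref{ind-deg} in place of Proposition~\ref{cor-coeff}. First I would normalize: by the substitution $f\mapsto f/[f]_\infty$, $\varphi\mapsto[f]_\infty^{-1}\varphi([f]_\infty\,\cdot)$, which turns $t$ into $[f]_\infty^{-1}t$ and keeps $\|\varphi'\|_\infty\le1$, $\|\varphi\|_\infty\le t$, it suffices to treat $[f]_\infty=1$; and the range $t\ge t_0(m,d)$ is settled at once by $\bigl|\int\varphi'(f)\prod_j\varrho_j\,dx\bigr|\le\|\varphi'\|_\infty\le1$.

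As in the proof of Theorem~\ref{Coeff-T-1}, write $\varrho_j=\varrho_j^{+1}-\varrho_j^{-1}$ with $\varrho_j^{\pm1}=\max\{\pm\varrho_j,0\}$ and $I_j^{\pm1}=\|\varrho_j^{\pm1}\|_{L^1}$, so that $I_j^{+1}+I_j^{-1}=1$ and $\|D_1\varrho_j^{\pm1}\|_{\rm TV}\le 12M$ with $M:=\max_j\|D_1\varrho_j\|_{\rm TV}$. Fix a sign pattern $\vec\varepsilon\in\{-1,1\}^n$, apply Theorem~\ref{lem-BChG} to each density $\varrho_j^{\varepsilon_j}/I_j^{\varepsilon_j}$ (the term is vacuous if $I_j^{\varepsilon_j}=0$) to obtain a mixing measure $\pi_j^{\varepsilon_j}$ with $\int(b_j-a_j)^{-1}\,\pi_j^{\varepsilon_j}(da_j\,db_j)\le 6M/I_j^{\varepsilon_j}$, and carry out the diagonal affine change of variables \eqref{eq-change}. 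This reduces the $\vec\varepsilon$-contribution to $\prod_j I_j^{\varepsilon_j}$ times the integral over $\bigotimes_j\pi_j^{\varepsilon_j}$ of $\bigl|\int_{Q^n}\varphi'(g)\,dy\bigr|$, where $g=f\circ L\in\mathcal{P}_{d,m}(\mathbb{R}^n)$, $d(g)=d(f)$, and the degree-$d(f)$ coefficients of $g$ are $c_\alpha\prod_k(b_k-a_k)^{\alpha_k}$.

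Now fix a multi-index $\vec k$ with $|\vec k|=d(f)$ and $|c_{\vec k}|=[f]_\infty=1$, and put $P:=\prod_{k\,:\,k_k\ge1}(b_k-a_k)^{-k_k}$, so that $[g]_\infty\ge P^{-1}$; the key point is that $P$ involves only the index set $J:=\{k:k_k\ge1\}$, which has $|J|\le d(f)\le d$. By Theorem~\ref{ind-deg} (whose constant $C(m,d)$ we may assume satisfies $C(m,d)\ge e^{d-m}$, since the estimate only becomes weaker) together with the trivial bound $\sigma\le1$,
\[
\Bigl|\int_{Q^n}\varphi'(g)\,dy\Bigr|\le\sigma\bigl(\mu_{Q^n}\circ g^{-1},t\bigr)\le\min\Bigl\{1,\;C(m,d)\,[g]_\infty^{-1/m}t^{1/m}\bigl(\bigl|\ln([g]_\infty^{-1}t)\bigr|^{d-m}+1\bigr)\Bigr\}.
\]
A short calculation shows that, writing $v=[g]_\infty^{-1}$, the ``raw'' bound $v\mapsto C(m,d)v^{1/m}t^{1/m}(|\ln(vt)|^{d-m}+1)$ is increasing on $(0,e^{-m(d-m)}/t)$ and is $\ge C(m,d)e^{-(d-m)}\ge1$ on $[e^{-m(d-m)}/t,\infty)$, so after truncation at $1$ it is non-decreasing in $v$; hence $v=[g]_\infty^{-1}$ may be replaced by its upper bound $P$, and it remains to estimate the $\bigotimes_j\pi_j^{\varepsilon_j}$-integral of $\min\bigl\{1,\,C(m,d)P^{1/m}t^{1/m}(|\ln(Pt)|^{d-m}+1)\bigr\}$.

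Let $v^\ast$ be the point where this truncation takes effect, so $c(m,d)/t\le v^\ast\le C(m,d)/t$. On $\{P\ge v^\ast\}$ I bound the integrand by $1$ and apply Markov's inequality with exponent $1/m$, using $\int(b_j-a_j)^{-k_j/m}\,d\pi_j^{\varepsilon_j}\le(6M/I_j^{\varepsilon_j})^{k_j/m}$, which holds by Jensen's inequality since $k_j\le m$. On $\{P<v^\ast\}$ one has $Pt<1$, hence $|\ln(Pt)|\le\ln\tfrac1t+m\sum_{j\in J}(\ln(b_j-a_j))_+$ and, using $|J|\le d$ so that the combinatorial constant remains admissible,
\[
|\ln(Pt)|^{d-m}+1\le C(m,d)\Bigl((\ln\tfrac1t)^{d-m}+1+\sum_{j\in J}(\ln(b_j-a_j))_+^{d-m}\Bigr).
\]
Integrating $P^{1/m}$ against this term by term, the $(\ln\tfrac1t)^{d-m}+1$ part is controlled by the same Jensen estimate, while each factor $(b_j-a_j)^{-k_j/m}(\ln(b_j-a_j))_+^{d-m}$ is — precisely because $k_j\ge1$ — a bounded function of $b_j-a_j\in[1,\infty)$ with bound depending only on $d,m$, so that its $\pi_j^{\varepsilon_j}$-integral is at most $C(m,d)$. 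Finally, the prefactor $\prod_j I_j^{\varepsilon_j}$ cancels the denominators $I_j^{\varepsilon_j}$ against powers $(I_j^{\varepsilon_j})^{1-k_j/m}$ with $1-k_j/m\ge0$; summing over the $2^n$ sign patterns the resulting sum factorizes, the factors indexed by $j\notin J$ collapsing to $I_j^{+1}+I_j^{-1}=1$ and the at most $d$ factors indexed by $j\in J$ contributing a factor $\le2$ each, which together with $(6M)^{d(f)/m}\le(6(1+M))^{d/m}$ produces the asserted bound with a constant depending only on $d$ and $m$; one then undoes the normalizations. I expect the main obstacle to be exactly this integration on $\{P<v^\ast\}$: the logarithmic factor is unbounded, whereas the only a priori control on each $\pi_j^{\varepsilon_j}$ is the first inverse moment of $b_j-a_j$, and the two are reconciled only by the monotonicity-after-truncation observation (which confines the logarithm to at most $d$ of the coordinates) together with the decay of $\ell^{-k_j/m}(\ln\ell)_+^{d-m}$ as $\ell\to\infty$.
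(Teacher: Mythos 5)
Your proposal is correct and follows essentially the same route as the paper: reduction to the cube via the Bobkov--Chistyakov--G\"otze mixture representation of Theorem~\ref{lem-BChG}, the diagonal change of variables \eqref{eq-change}, the lower bound $[g]_\infty\ge\prod_k(b_k-a_k)^{k_k}$, the cube estimate of Theorem~\ref{ind-deg}, Jensen's inequality for the inverse moments $\int(b_k-a_k)^{-k_k/m}\,d\pi_k^{\varepsilon_k}$, and the summation over sign patterns. The only divergence is in how the $(a,b)$-dependence inside the logarithm is removed: the paper does a pointwise case analysis on $[g]_\infty\lessgtr1$, absorbing $[g]_\infty^{-1/m}(\ln[g]_\infty)^{d-m}$ into a constant before integrating over the mixing measures, whereas you truncate at $1$, use monotonicity in $[g]_\infty^{-1}$, split the mixture integration at the truncation point with a Markov inequality, and absorb $(b_j-a_j)^{-k_j/m}(\ln(b_j-a_j))_+^{d-m}$ coordinate-wise; both rest on the same boundedness of $s^{-c}(\ln s)_+^{p}$ and your version, while more involved, is sound.
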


\begin{proof}
Without loss of generality, we may assume that 
$[f]_\infty=1$.
If $t\ge 1$, then	
$$
\int_{\mathbb{R}^n}\varphi'(f(x))\varrho_1(x_1)\ldots \varrho_n(x_n)\, dx
\le 1\le  t^{1/m}\bigl(|\ln t|^{d-m}+1\bigr).
$$	
Hence, we consider $t \in (0, 1)$.
In this case,	
we argue similarly to the proof of Theorem~\ref{Coeff-T-1}.
Let $M:=\max\limits_{1\le j\le n}\|D_1\varrho_j\|_{\rm TV}$,
and let 
$$
\varrho_j^{1}:=\max\{\varrho_j, 0\},\quad 
\varrho_j^{-1}:=\max\{-\varrho_j, 0\},\quad
I_j^{\pm1}:=\int_{\mathbb{R}}\varrho_j^{\pm1}(t)\, dt.
$$
Now, for any fixed choice of signs
$\varepsilon_1, \ldots, \varepsilon_n\in\{-1, 1\}$, we apply
Theorem~\ref{lem-BChG} and obtain
\begin{align*}
&\int_{\mathbb{R}^n}\varphi'(f(x))\varrho_1^{\varepsilon_1}(x_1)\cdot\ldots\cdot\varrho_n^{\varepsilon_n}(x_n)\, dx
\\
&=
\prod_{j=1}^n I_j^{\varepsilon_j}
\int_{\{a_1<b_1\}}\ldots \int_{\{a_n<b_n\}}
\int_{Q^n}
\varphi'(g(y))\, dy\, \pi_1^{\varepsilon_1}(da_1 db_1)\ldots\pi_n^{\varepsilon_n}(da_ndb_n),
\end{align*}
where $g(y) = f(Ly)$, and $L$ denotes the change of variables defined in \eqref{eq-change}.

\vskip .1in

\noindent
Let $1=[f]_\infty=a_{j_1, \ldots, j_n}$,
$j_1+\ldots+j_n=d(f)$.
If $t\ge [g]_\infty$, then
$$
\int_{Q^n}\varphi'(g(y))\, dy
\le 1\le [g]_\infty^{-1/m}t^{1/m}
\bigl(|\ln t|^{d-m}+1\bigr).
$$
If $t < [g]_\infty$, then by Theorem~\ref{ind-deg}, we obtain the estimate
$$
\int_{Q^n}\varphi'(g(y))\, dy
\le 
C(m, d)[g]_\infty^{-1/m}t^{1/m}\bigl(|\ln([g]_\infty^{-1}t)|^{d-m}+1\bigr).
$$ 
We point out that in the case $t < [g]_\infty$
one has
$$
|\ln([g]_\infty^{-1}t)|
=-\ln t + \ln [g]_\infty.
$$
If $[g]_\infty<1$ then 
$$
|\ln([g]_\infty^{-1}t)|^{d-m}\le |\ln t|^{d-m}
$$
and
$$
\int_{Q^n}\varphi'(g(y))\, dy
\le 
C(m, d)[g]_\infty^{-1/m}t^{1/m}\bigl(|\ln t|^{d-m}+1\bigr).
$$
Finally, if $[g]_\infty \ge 1$ then
\begin{align*}
&[g]_\infty^{-1/m}t^{1/m}\bigl(|\ln([g]_\infty^{-1}t)|^{d-m}+1\bigr)
\\
&\le 
2^{d-m}[g]_\infty^{-1/m}t^{1/m}\bigl(|\ln t|^{d-m}+1\bigr)
+2^{d-m}
[g]_\infty^{-1/m}t^{1/m}(\ln[g]_\infty)^{d-m}
\\
&\le 
2^{d-m}[g]_\infty^{-1/m}t^{1/m}\bigl(|\ln t|^{d-m}+1\bigr)
+2^{d-m}m^{d-m}(d-m)^{d-m}t^{1/m}
\end{align*}
and in all cases we have
$$
\int_{Q^n}\varphi'(g(y))\, dy
\le 
C_1(m, d)([g]_\infty^{-1/m}+1)t^{1/m}\bigl(|\ln t|^{d-m}+1\bigr).
$$
By \eqref{eq-coeff},
$$
[g]_\infty\ge (b_1-a_1)^{j_1}\ldots(b_n-a_n)^{j_n},
$$
which implies
\begin{align*}
\int_{Q^n}&\varphi'(g(y))\, dy
\\
&\le 
C_1(m, d) \bigl(1+(b_1-a_1)^{-j_1/m}\ldots(b_n-a_n)^{-j_n/m}\bigr)
t^{1/m}
\bigl(|\ln t|^{d-m}+1\bigr)
\end{align*}
and
\begin{align*}
&\int_{\mathbb{R}^n}\varphi'(f(x))\varrho_1^{\varepsilon_1}(x_1)\cdot\ldots\cdot\varrho_n^{\varepsilon_n}(x_n)\, dx
\\	
&\le
C_1(m, d)t^{1/m}
\bigl(|\ln t|^{d-m}+1\bigr)
\prod_{j=1}^n I_j^{\varepsilon_j}
\\
&
\times
\int_{\{a_1<b_1\}}\ldots \int_{\{a_n<b_n\}}
\Bigl(1+\prod_{k=1}^n(b_k-a_k)^{-j_k/m}\Bigr)
\, \pi_1^{\varepsilon_1}(da_1 db_1)\ldots\pi_n^{\varepsilon_n}(da_ndb_n)
\\
&=
C_1(m, d)t^{1/m}
\bigl(|\ln t|^{d-m}\!+\!1\bigr)\!
\prod_{j=1}^n I_j^{\varepsilon_j}\!
\Bigl(1\!+\!\prod_{k=1}^n\int(b_k-a_k)^{-j_k/m}\, \pi_k^{\varepsilon_k}(da_k db_k)\Bigr).
\end{align*}
Applying H\"older's inequality, we obtain
\begin{align*}
&\int_{\{a_k<b_k\}}(b_k-a_k)^{-j_k/m}\, \pi_k^{\varepsilon_k}(da_k db_k)
\\
&\le 
\Bigl(\int_{\{a_k<b_k\}}(b_k-a_k)^{-1}\, \pi_k^{\varepsilon_k}(da_k db_k)
\Bigr)^{j_k/m}
\\
&\le (\|D_1\varrho_k^{\varepsilon_k}\|_{\rm TV}/I_k^{\varepsilon_k})^{j_k/m}
\le (12M)^{j_k/m}(I_k^{\varepsilon_k})^{-j_k/m}.
\end{align*}
Thus,
\begin{align*}
\int_{\mathbb{R}^n}&\varphi'(f(x))\varrho_1^{\varepsilon_1}(x_1)\cdot\ldots\cdot\varrho_n^{\varepsilon_n}(x_n)\, dx
\\	
&\le
C_2(m, d)t^{1/m}
\bigl(|\ln t|^{d-m}+1\bigr)
\Bigl(\prod_{j=1}^n I_j^{\varepsilon_j}+\prod_{k=1}^n (I_k^{\varepsilon_k})^{1-j_k/m}
M^{d(f)/m}\Bigr).
\end{align*}
Summing over all choices of signs, we arrive at
\begin{align*}
&\int_{\mathbb{R}^n}\varphi'(f(x))\varrho_1(x_1)\cdot\ldots\cdot\varrho_n(x_n)\, dx
\\
&= \sum_{\varepsilon_1, \ldots, \varepsilon_n\in\{-1, 1\}}\prod_{j=1}^n\varepsilon_j
\int_{\mathbb{R}^n}\varphi'(f(x))\varrho_1^{\varepsilon_1}(x_1)\cdot\ldots\cdot\varrho_n^{\varepsilon_n}(x_n)\, dx
\\
&\le
C_2(m, d)t^{1/m}
\bigl(|\ln t|^{d-m}+1\bigr)\!\!\!\!\!\!\!
\sum_{\varepsilon_1, \ldots, \varepsilon_n\in\{-1, 1\}}\!\!
\Bigl(\prod_{j=1}^n I_j^{\varepsilon_j}\!+\!
\prod_{k=1}^n (I_k^{\varepsilon_k})^{1-j_k/m}
M^{d(f)/m}\Bigr).
\end{align*}
We note that
$$
\sum_{\varepsilon_1, \ldots, \varepsilon_n\in\{-1, 1\}}
\prod_{j=1}^n I_j^{\varepsilon_j}
=\prod_{j=1}^n (I_j^{+1}+I_j^{-1})=1
$$
and
\begin{align*}
\sum_{\varepsilon_1, \ldots, \varepsilon_n\in\{-1, 1\}}
\prod_{k=1}^n (I_k^{\varepsilon_k})^{1-j_k/m}
&=
\prod_{k=1}^n \bigl((I_k^{+1})^{1-j_k/m}
+(I_k^{-1})^{1-j_k/m}\bigr)
\\
&\le 
\prod_{k=1}^n 
\Bigl(2 \Bigl(\frac{1}{2}I_k^{+1}
+\frac{1}{2}I_k^{-1}\Bigr)^{1-j_k/m}\Bigr)
=2^{d(f)/m},
\end{align*}
where we have used the concavity of the function $s\mapsto s^{1-j_k/m}$, $s>0$.
Thus,
\begin{align*}
\int_{\mathbb{R}^n}\varphi'(f(x))&\varrho_1(x_1)\cdot\ldots\cdot\varrho_n(x_n)\, dx
\\
&\le
C_2(m, d)t^{1/m}
\bigl(|\ln t|^{d-m}+1\bigr)
\bigl(1+
(2M)^{d(f)/m}\bigr),
\end{align*}
which completes the proof.
\end{proof}

\begin{remark}
Theorem~\ref{reg-prod-2} follows
directly from Theorem~\ref{Coeff-T-2}.
\end{remark}

\section*{Acknowledgements}

The author would like to thank Sergey Tikhonov for carefully reading the manuscript and for valuable comments and suggestions.

The research was supported by the AEI grants 
RYC2023-043616-I and
PID2023-150984NB-I00 funded by 
MICIU/AEI/10.13039/501100011033/FEDER, EU,
and by the Spanish State Research Agency, through the Severo Ochoa and Mar\'ia de Maeztu Program for Centers and
Units of Excellence in R\&D (CEX2020-001084-M).
The author thanks CERCA Programme (Generalitat de Catalunya) for institutional support.
{\sloppy

}

\end{document}